\documentclass[11pt, reqno]{amsart}

\usepackage[T1]{fontenc}
\usepackage[latin1]{inputenc}

\usepackage{amsmath}
\usepackage{amsthm}
\usepackage{version}
\usepackage{xypic}
\usepackage{amssymb}

\usepackage[a4paper]{geometry}

\usepackage{mathtools}

\theoremstyle{plain}
\newtheorem{thm}{Theorem}[section]
\newtheorem{prop}[thm]{Proposition}
\newtheorem{lemma}[thm]{Lemma}
\newtheorem{coro}[thm]{Corollary}

\theoremstyle{definition}
\newtheorem{paragr}[thm]{}
\newtheorem{rem}[thm]{Remark}

\allowdisplaybreaks


\usepackage{xspace}

\makeatletter

\newcommand\DeclareMathOperatorRm[1]{%
  \expandafter\DeclareMathOperator\csname #1\endcsname{#1}}

\newcommand\DeclareMathOperatorSf[1]{%
  \expandafter\DeclareMathOperator\csname #1\endcsname{\mathsf{#1}}}

\newcommand\ndef[1]{\emph{#1}}

\def\xpoint{\futurelet\@let@token\@xpoint}
\def\@xpoint{%
  \ifx\@let@token.\else
    .%
  \fi
  \xspace}

\newcommand\zbox[1]{\makebox[0pt][l]{#1}}
\newcommand\pbox[1]{\zbox{\quad#1}}


\newcommand\C{\mathcal{C}}
\newcommand{\oo}{$\infty$\nobreakdash}
\newcommand\op{\mathrm{o}}
\newcommand\W{{\mathcal{W}}}
\newcommand\pref[1]{\widehat{#1}}
\newcommand\copref[1]{\widetilde{#1}}
\DeclareMathOperatorSf{Ob}
\DeclareMathOperatorSf{Fl}
\DeclareMathOperatorSf{Hom}
\DeclareMathOperatorRm{Aut}
\newcommand\id[1]{{1^{}_{#1}}}

\DeclareMathOperator{\Homi}{\underline{\mathsf{Hom}}}
\DeclareMathOperator{\Home}{\underline{\mathsf{Hom}}_{\,!}}
\DeclareMathOperator{\Homgl}{\underline{\mathsf{Hom}}_{\mathrm{\,gl}}}
\DeclareMathOperator{\Homglz}{\underline{\mathsf{Hom}}_{\mathrm{\,gl}_0}}

\DeclareMathOperator{\ExtGl}{\mathrm{Ext}_{\mathrm{gl}}}
\DeclareMathOperator{\ExtCat}{\mathrm{Ext}_{\mathrm{cat}}}
\DeclareMathOperator{\ExtGr}{\mathrm{Ext}_{\mathrm{gr}}}

\newcommand\Set{{\mathcal{S} \mspace{-2.mu}{et}}}
\newcommand{\Hot}{\mathsf{Hot}}
\newcommand{\Ho}[1]{\mathsf{Hot}_{#1}}

\newcommand{\@Cat}[3]{{\operatorname{\text{$#1$}\mathcal{#2C} \mspace{-2.mu}{at#3}}}}
\newcommand{\cat}{{\@Cat{}{}{}}}
\newcommand\Cat\cat
\newcommand{\ncat}[1]{{\@Cat{#1-}{}{}}}
\newcommand{\wcat}{{\@Cat{\infty-}{}{}}}

\newcommand{\@Grp}[3]{{%
  \operatorname{\text{$#1$}{\mathcal{#2G}\mspace{-0.5mu}rp\mspace{-0.01mu}d#3}}}}
\newcommand{\grp}{{\@Grp{}{}{}}}
\newcommand{\ngrp}[1]{{\@Grp{#1-}{}{}}}
\newcommand{\wgrp}{{\@Grp{\infty-}{}{}}}

\newcommand{\G}{\mathbb{G}}

\newcommand{\Thz}{\Theta_0}
\newcommand{\Th}{\Theta}
\newcommand{\Thtld}{\widetilde{\Theta}}

\newcommand\Mod[1]{{\operatorname{\mathrm{Mod}}(#1)}}

\newcommand\Ass[2]{(\mathrm{Ass}_{{#1},{#2}})}
\newcommand\Assx{(\mathrm{Ass})}
\newcommand\Exc[3]{(\mathrm{Exc}_{{#1},{#2},{#3}})}
\newcommand\Excx{(\mathrm{Exc})}
\newcommand\Lun[2]{(\mathrm{LUnit}_{{#1},{#2}})}
\newcommand\Lunx{(\mathrm{LUnit})}
\newcommand\Run[2]{(\mathrm{RUnit}_{{#1},{#2}})}
\newcommand\Runx{(\mathrm{RUnit})}
\newcommand\Fun[2]{(\mathrm{FUnit}_{{#1},{#2}})}
\newcommand\Funx{(\mathrm{FUnit})}
\newcommand\LInv[2]{(\mathrm{LInv}_{{#1},{#2}})}
\newcommand\LInvx{(\mathrm{LInv})}
\newcommand\RInv[2]{(\mathrm{RInv}_{{#1},{#2}})}
\newcommand\RInvx{(\mathrm{RInv})}
\newcommand\FInv[3]{(\mathrm{FInv}_{{#1},{#2},{#3}})}
\newcommand\FInvx{(\mathrm{FInv})}

\newcommand\DThtld{\mathcal{D}_{\Thtld}}
\newcommand\DThtldn[1]{\mathcal{D}_{\Thtld_{#1}}}
\newcommand\DTh{\mathcal{D}_{\Th}}

\newcommand\tabdimi[1][n]{%
{\left(
\begin{matrix}
i_1 && i_2 && \cdots && i_{#1} \cr
& i'_1 && i'_2 & \cdots & i'_{{#1}-1}
\end{matrix}
\right)}}

\newcommand\tabdim{{\tabdimi{}}}
\newcommand\tabdimijk{%
\left(
\begin{smallmatrix}
i & & j \cr
& k
\end{smallmatrix}
\right)}

\newcommand\at{\widetilde{a}}

\newcommand{\Dn}[1]{{\mathrm{D}_{#1}}}
\newcommand{\Dnt}[1]{{\widetilde{\mathrm{D}}_{#1}}}

\newcommand{\amalgd}[1]{\amalg^{}_{\Dn{#1}}}
\newcommand{\amalgdt}[1]{\amalg^{}_{\Dnt{#1}}}
\newcommand{\amalgX}[1]{\amalg^{}_{#1}}

\newcommand\Xt[1]{{\widetilde{X}_{#1}}}
\newcommand\fibrx[1]{\times^{}_{X_{#1}}}
\newcommand\fibrxt[1]{\times^{}_{\Xt{#1}}}

\newcommand\xb{\overline{x}}
\newcommand\yb{\overline{y}}
\newcommand\zb{\overline{z}}
\newcommand\tb{\overline{t}}

\newcommand{\Tht}[2][\@empty]{%
\ifx\@empty#1
\tau_{#2}^{}%
\else
\tau_{#1}^{#2}%
\fi
}
\newcommand{\Thtt}[2][\@empty]{%
\ifx\@empty#1
\widetilde{\tau}_{#2}^{}%
\else
\widetilde{\tau}_{#1}^{#2}%
\fi
}

\newcommand{\Ths}[2][\@empty]{%
\ifx\@empty#1
\sigma_{#2}^{}%
\else
\sigma_{#1}^{#2}%
\fi
}
\newcommand{\Thst}[2][\@empty]{%
\ifx\@empty#1
\widetilde{\sigma}_{#2}^{}%
\else
\widetilde{\sigma}_{#1}^{#2}%
\fi
}

\newcommand{\Thkp}[2][\@empty]{%
\ifx\@empty#1
\kappa'_{#2}%
\else
\kappa_{#1}^{'#2}%
\fi
}
\newcommand{\Thk}[2][\@empty]{%
\ifx\@empty#1
\kappa_{#2}^{}%
\else
\kappa_{#1}^{#2}%
\fi
}
\newcommand{\Thkt}[2][\@empty]{%
\ifx\@empty#1
\widetilde{\kappa}_{#2}^{}%
\else
\widetilde{\kappa}_{#1}^{#2}%
\fi
}
\newcommand{\Thnp}[2][\@empty]{%
\ifx\@empty#1
\nabla'_{#2}%
\else
\nabla_{#1}^{'#2}%
\fi
}
\newcommand{\Thn}[2][\@empty]{%
\ifx\@empty#1
\nabla_{#2}^{}%
\else
\nabla_{#1}^{#2}%
\fi
}
\newcommand{\Thnt}[2][\@empty]{%
\ifx\@empty#1
\widetilde{\nabla}_{#2}^{}%
\else
\widetilde{\nabla}_{#1}^{#2}%
\fi
}
\newcommand{\Thw}[2][\@empty]{%
\ifx\@empty#1
\Omega_{#2}^{}%
\else
\Omega_{#1}^{#2}%
\fi
}
\newcommand{\Thwt}[2][\@empty]{%
\ifx\@empty#1
\widetilde{\Omega}_{#2}^{}%
\else
\widetilde{\Omega}_{#1}^{#2}%
\fi
}

\newcommand\comp\ast
\newcommand\compt{\widetilde{\ast}\mspace{1mu}}
\newcommand\Glnto[2][\@empty]{%
\ifx\@empty#1
\widetilde{\ast}\mspace{1mu}_{#2}%
\else
    \widetilde{\ast}\mspace{1mu}_{#1}^{#2}%
  \fi
}
\newcommand\Glnt[2][\@empty]{%
  \ifx\@empty#1
    \,\widetilde{\ast}\mspace{1mu}_{#2}\,%
  \else
    \,\widetilde{\ast}\mspace{1mu}_{#1}^{#2}\,%
  \fi
}

\newcommand\Glp[1]{p^{}_{#1}}

\newcommand{\Glt}[2][\@empty]{%
  \ifx\@empty#1
    t_{#2}%
  \else
    t_{#1}^{#2}%
  \fi
}
\newcommand{\Gls}[2][\@empty]{%
  \ifx\@empty#1
    s_{#2}%
  \else
    s_{#1}^{#2}%
  \fi
}
\newcommand{\Gltt}[2][\@empty]{%
  \ifx\@empty#1
    \widetilde{t}_{#2}%
  \else
    \tilde{t}_{#1}^{#2}%
  \fi
}
\newcommand{\Glst}[2][\@empty]{%
  \ifx\@empty#1
    \widetilde{s}_{#2}%
  \else
    \tilde{s}_{#1}^{#2}%
  \fi
}
\newcommand{\Glw}[2][\@empty]{%
  \ifx\@empty#1
    w_{#2}^{}%
  \else
    w_{#1}^{#2}%
  \fi
}
\newcommand{\Glwt}[2][\@empty]{%
  \ifx\@empty#1
    \widetilde{w}\mspace{1mu}_{#2}^{}%
  \else
    \widetilde{w}\mspace{1mu}_{#1}^{#2}%
  \fi
}
\newcommand{\Glk}[2][\@empty]{%
  \ifx\@empty#1
    k_{#2}%
  \else
    k_{#1}^{#2}%
  \fi
}
\newcommand{\Glkt}[2][\@empty]{%
  \ifx\@empty#1
    \widetilde{k}_{#2}%
  \else
    \widetilde{k}_{#1}^{#2}%
  \fi
}
\newcommand{\Glkp}[2][\@empty]{%
  \ifx\@empty#1
    k'_{#2}%
  \else
    k_{#1}^{'#2}%
  \fi
}


\renewcommand\epsilon\varepsilon

\newcommand\cepsu{{\epsilon^{}_1}}

\newcommand\ceps[1]{{\epsilon^{}_{#1}}}
\newcommand\cepsp[1]{{\epsilon'_{#1}}}
\newcommand\cepsg[1]{\ceps{#1}}

\makeatother


\author{Dimitri Ara}
\address{Dimitri Ara, Institut Mathématiques de Jussieu, Université Paris
Diderot Paris 7, Case 7012, Bâtiment Chevaleret, 75205 Paris Cedex 13}
\email{ara@math.jussieu.fr}
\urladdr{http://people.math.jussieu.fr/~ara/}
\keywords{$\infty$-category, $\infty$-groupoid, cell category,
décalage, globular extension, homotopy, localization, test category, weak
equivalence}

\subjclass[2000]{18D05, 18E35, 18F20, 18G55, 55P15, 55P60, 55U35, 55U40}
 
\title[$\Thtld$ is a test category]{The groupoidal analogue $\Thtld$\\ to Joyal's category $\Th$ is a test category}

\thanks{
The author is grateful to Georges Maltsiniotis for having suggested him
this approach to prove that $\Thtld$ is a test category.
}

\begin{document}

\begin{abstract}
We introduce the groupoidal analogue $\Thtld$ to Joyal's cell category $\Th$ and
we prove that $\Thtld$ is a strict test category in the sense of Grothendieck.
This implies that presheaves on $\Thtld$ model homotopy types in a
canonical way. We also prove that the canonical functor from $\Th$ to
$\Thtld$ is aspherical, again in the sense of Grothendieck. This allows us
to compare weak equivalences of presheaves on $\Thtld$ to weak equivalences
of presheaves on $\Th$. Our proofs apply to other categories analogous to~$\Th$.
\end{abstract}

\maketitle

\section{Introduction}

In \emph{Pursuing Stacks}, Grothendieck defines a notion of weak
\oo-groupoid (see \cite{GrothPS}, \cite{MaltsiGr}, \cite{AraThese} and
\cite{MaltsiGrCat}) and conjectures that his weak \oo-groupoids model
homotopy types in a precise way. His definition of weak \oo-groupoids is
based on the notion of coherator. Roughly speaking, a coherator is a
category encoding the algebraic theory of weak \oo-groupoids. If $C$ is a
coherator, a weak \oo-groupoid (of type $C$) is a presheaf on
$C$ satisfying some left exactness condition. A first step toward
Grothendieck's conjecture would thus be to prove that presheaves on a
coherator (without the exactness condition) model homotopy types.

This is where test categories enter the picture. Test categories were
introduced by Grothendieck in \cite{GrothPS} (see also \cite{Maltsi} and
\cite{Cisinski}). The main property of these categories is that presheaves
on a test category canonically model homotopy types. Therefore, to prove
Grothendieck's conjecture, it is reasonable to start by trying to prove that
every coherator is a test category. In \cite{MaltsiGr}, Maltsiniotis
gave a series of conjectures implying Grothendieck's conjecture based on this
idea.

Conjecturally, every coherator is (non canonically) endowed with a
``forgetful'' functor to $\Thtld$. This is the reason why we are interested
in understanding the homotopy theory of $\Thtld$. In this article, we prove
that $\Thtld$ is a test category. Our hope is that we will be able to deduce
that every coherator $C$ is a test category from this result, using
properties of the functor from $C$ to $\Thtld$.

As announced in the title, $\Thtld$ is the groupoidal analogue to Joyal's
cell category~$\Th$. The category $\Th$ was introduced by Joyal in
\cite{JoyalTheta} as the opposite category of the category of so-called
finite disks.  Batanin and Street conjectured in \cite{BataninStreetTrees}
that $\Th$ could be seen as a full subcategory of the category of strict
\oo-categories. This was proved independently by Makkai and Zawadowski in
\cite{MakZaw} and by Berger in \cite{BergerNerve}. In the latter article,
Berger also gave a nice combinatorial description of $\Th$.  In our paper,
the category~$\Th$ is introduced as the universal categorical globular
extension. Roughly speaking, a categorical globular extension is a 
category endowed with operations dual to those of strict \oo-categories and
satisfying axioms dual to those of strict \oo-categories. 
We show, starting from our definition, that $\Th$ can be seen as the full
subcategory of the category of strict \oo-categories whose
objects are free strict \oo-categories on globular pasting schemes.
This implies, using the result of Berger, Makkai and Zawadowski, that our
category $\Th$ is canonically isomorphic to Joyal's cell category.

The category $\Thtld$ is defined in the same way by replacing strict
\oo-categories by strict \oo-groupoids. In our terminology, $\Thtld$ is the
universal groupoidal globular extension. We prove that $\Thtld$ can be seen
as the full subcategory of the category of strict \oo-groupoids whose
objects are free strict \oo-groupoids on globular pasting schemes. To our
knowledge, there is no known combinatorial description of $\Thtld$ analogous
to Berger's description of $\Th$.

In \cite{MaltsiTheta}, Cisinski and Maltsiniotis introduced the notion of
décalage and used it to prove that $\Th$ is a test category. They actually
proved that $\Th$ is a strict test category: a test category $A$ is strict if
the binary product of presheaves on $A$ is compatible with the product of homotopy types in a
strong sense. They proved that a small category satisfying some easy to check
condition, plus the existence of a splittable décalage, is a strict test
category. They then constructed a splittable décalage $\DTh$ on $\Th$ and
applied this
result to $\Th$. To construct this décalage, they used a beautiful description
of $\Th$ in terms of wreath products due to Berger (see
\cite{BergerWreath}). Another proof of the fact that $\Th$ is a test
category is given in Section 7.2 of the PhD thesis \cite{AraThese} of the
author.

Unfortunately, the category $\Thtld$ cannot be obtained using wreath
products and therefore the construction of the décalage $\DTh$ does not apply
to $\Thtld$. In this article, we construct a splittable décalage $\DThtld$ on
$\Thtld$ ``by hand''. The formulas defining $\DThtld$ are inspired by the
ones one can get by unfolding the definition of $\DTh$.
In particular, the construction of our décalage $\DThtld$ will also apply to
$\Th$ and, in this case, we will get the décalage $\DTh$ of \cite{MaltsiTheta}.

We deduce from the existence of the splittable décalage $\DThtld$ that $\Thtld$
is a strict test category. By a theorem of Cisinski, conjectured by
Grothendieck, this implies that the category of presheaves on $\Thtld$ is
endowed with a model category structure whose homotopy category is the
homotopy category of CW-complexes. There exists a canonical
functor from $\Th$ to $\Thtld$. Using the fact that this functor is
compatible with the décalages $\DTh$ and $\DThtld$, we deduce that it is
aspherical in the sense of Grothendieck. This implies that this functor
induces a Quillen equivalence between the Grothendieck-Cisinski model
category structures on presheaves on $\Th$ and on $\Thtld$. Note that the
Grothendieck-Cisinski model structure on presheaves on $\Th$ had already
been obtained by Berger in \cite{BergerNerve} using topological
techniques.

Moreover, our construction applies to other categories having similar
universal properties. For instance, the category $\Theta_{lr}$, which has a
universal property related to ``strict \oo-categories not necessarily
satisfying the axiom of functoriality of units'', is also a strict test
category and the canonical functor from $\Theta_{lr}$ to $\Th$ is
aspherical.

Most of the content of this article is extracted from the last chapter of
the PhD thesis \cite{AraThese} of the author. The calculations have been
entirely rewritten ``using elements'' (see Paragraph \ref{paragr:element}
for details).

Our paper is organized as follows. In Section 2, we recall the definitions of
strict \oo-categories and strict \oo-groupoids. We introduce the globular
language and in particular globular sums and globular extensions. We also
define the notion of categorical and groupoidal globular extensions, which
are in a sense dual to those of strict \oo-categories and strict
\oo-groupoids. In Section 3, we introduce the categories $\Thz$, $\Th$ and
$\Thtld$. In Section 4, we give a brief introduction to the theory of test
categories and we gather the definitions and results from \cite{MaltsiTheta}
about décalages that we will need.  We then enter the heart of the article.
In Section 5, we explain how to construct a new globular extension, the
twisted globular extension, from a globular extension endowed with some
comultiplications. In Section 6, we apply this construction to a groupoidal
globular extension and we show that the twisted globular extension is
endowed with a structure of groupoidal globular extension. In Section 7, we
use the results of Section 6 to build our décalage $\DThtld$. We show that
$\DThtld$ is splittable. In the final Section, we draw the consequences of
the previous Sections. We show that $\Thtld$ is a strict test category and
that the functor from $\Th$ to $\Thtld$ is aspherical. We explain how these
results generalize to other analogous categories.

If $C$ is a category, we will denote by $C^\op$ the opposite category. If
\[
\xymatrix@C=1pc@R=1pc{
X_1 \ar[dr]_{f_1} & & X_2 \ar[dl]^{g_1} \ar[dr]_{f_2} & &  \cdots & & X_n
\ar[dl]^{g_{n-1}} \\
& Y_1 & & Y_2 & \cdots & Y_{n-1}
}
\]
is a diagram in $C$, we will denote by 
\[ (X_1, f_1) \times_{Y_1} (g_1, X_2, f_2) \times_{Y_2} \dots
\times_{Y_{n-1}} (g_{n-1}, X_n) \]
its projective limit. Dually, we will denote by
\[ (X_1, f_1) \amalg_{Y_1} (g_1, X_2, f_2) \amalg_{Y_2} \dots
\amalg_{Y_{n-1}} (g_{n-1}, X_n) \]
the inductive limit of the corresponding diagram in $C^\op$.

\section{Strict $\infty$-categories and strict $\infty$-groupoids}

\begin{paragr}
We will denote by $\G$ the \ndef{globular category}, that is the category
generated by the graph
\[
\xymatrix{
\Dn{0} \ar@<.6ex>[r]^-{\Ths{1}} \ar@<-.6ex>[r]_-{\Tht{1}} &
\Dn{1} \ar@<.6ex>[r]^-{\Ths{2}} \ar@<-.6ex>[r]_-{\Tht{2}} &
\cdots \ar@<.6ex>[r]^-{\Ths{i-1}} \ar@<-.6ex>[r]_-{\Tht{i-1}} &
\Dn{i-1} \ar@<.6ex>[r]^-{\Ths{i}} \ar@<-.6ex>[r]_-{\Tht{i}} &
\Dn{i} \ar@<.6ex>[r]^-{\Ths{i+1}} \ar@<-.6ex>[r]_-{\Tht{i+1}} &
\dots
}
\]
and the coglobular relations
\[\Ths{i+1}\Ths{i} = \Tht{i+1}\Ths{i}\quad\text{and}\quad\Ths{i+1}\Tht{i} =
\Tht{i+1}\Tht{i}, \qquad i \ge 1.\]
For $i \ge j \ge 0$, we will denote by $\Ths[j]{i}$ and $\Tht[j]{i}$ the
morphisms from $\Dn{j}$ to $\Dn{i}$ defined by
\[\Ths[j]{i} = \Ths{i}\cdots\Ths{j+2}\Ths{j+1}\quad\text{and}\quad
  \Tht[j]{i} = \Tht{i}\cdots\Tht{j+2}\Tht{j+1}.\]

A \ndef{globular set} or \ndef{\oo-graph} is a presheaf on $\G$. The datum of
a globular set $X$ amounts to the datum of a diagram of sets
\[
\xymatrix{
\cdots \ar@<.6ex>[r]^-{\Gls{i+1}} \ar@<-.6ex>[r]_-{\Glt{i+1}} &
X_{i} \ar@<.6ex>[r]^-{\Gls{i}} \ar@<-.6ex>[r]_-{\Glt{i}} &
X_{i-1} \ar@<.6ex>[r]^-{\Gls{i-1}} \ar@<-.6ex>[r]_-{\Glt{i-1}} &
\cdots \ar@<.6ex>[r]^-{\Gls{2}} \ar@<-.6ex>[r]_-{\Glt{2}} &
X_1 \ar@<.6ex>[r]^-{\Gls{1}} \ar@<-.6ex>[r]_-{\Glt{1}} &
X_0
}
\]
satisfying the globular relations
\[\Gls{i}\Gls{i+1} = \Gls{i}\Glt{i+1}\quad\text{and}\quad\Glt{i}\Gls{i+1} =
\Glt{i}\Glt{i+1}, \qquad i \ge 1.\]
For $i \ge j \ge 0$, we will denote by $\Gls[j]{i}$ and $\Glt[j]{i}$ the
maps from $X_i$ to $X_j$ defined by
\[\Gls[j]{i} = \Gls{j+1}\cdots\Gls{i-1}\Gls{i}\quad\text{and}\quad
  \Glt[j]{i} = \Glt{j+1}\cdots\Glt{i-1}\Glt{i}.\]

A \ndef{morphism of globular sets} is a morphism of presheaves on $\G$.
\end{paragr}

\begin{paragr}
An \ndef{\oo-precategory} is a globular set $X$ endowed with maps
\[
  \begin{split}
  \comp_j^i & : \Big(X_i, \Gls[j]{i}\big) \times_{X_j} (\Glt[j]{i}, X_i) \to 
      X_i,\quad i > j \ge 0,
      \\
  \Glk{i} & : X_i \to X_{i+1}, \quad i \ge 0,
  \end{split}
\]
such that
\begin{enumerate}
    \item 
      for every 
      $(u, v)$ in $(X_i, \Gls[j]{i}) \times_{X_j} (\Glt[j]{i}, X_i)$ with
      $i > j \ge 0$, we have
  \[
  \Gls{i}\big(u \comp_j^i v\big) = 
  \begin{cases}
    \Gls{i}(v), & j = i - 1, \\
    \Gls{i}(u) \comp_j^{i-1} \Gls{i}(v), & j < i - 1,
  \end{cases}
  \]
  and
\[
  \Glt{i}(u \comp_j^i v) = 
  \begin{cases}
    \Glt{i}(u), & j = i - 1, \\
    \Glt{i}(u) \comp_j^{i-1} \Glt{i}(v), & j < i - 1;
  \end{cases}
  \]
  \item for every $u$ in $X_i$ with $i \ge 0$, we have
  \[
  \Gls{i+1}\Glk{i}(u) = u = \Glt{i+1}\Glk{i}(u).
  \] 
\end{enumerate}

For $i \ge j \ge 0$, we will denote by $\Glk[i]{j}$ the map from $X_j \to
X_i$ defined by
  \[ \Glk[i]{j} = \Glk{i-1}\cdots\Glk{j+1}\Glk{j}. \]

A \ndef{morphism of \oo-precategories} is a morphism of globular sets
between \oo-pre\-cat\-e\-go\-ries which is compatible with the $\ast^i_j$'s and the
$\Glk{i}$'s in an obvious way.

An \oo-precategory $X$ is a \ndef{strict \oo-category}
if it satisfies the following axioms:
\begin{itemize}
  \item $\Ass{i}{j},\quad i > j \ge 0$,\\
     for every $(u, v, w)$ in
    $(X_i, \Gls[j]{i}) \times_{X_j} (\Glt[j]{i}, X_i,
    \Gls[j]{i}) \times_{X_j} (\Glt[j]{i}, X_i)$,
    we have
    \[ (u \comp^i_j v) \comp^i_j w = u \comp^i_j (v \comp^i_j w)\text{;} \]
  \item $\Exc{i}{j}{k}, \quad i > j > k \ge 0$,\\
   for every $(u, u', v, v')$ in
    \[ (X_i, \Gls[j]{i}) \times_{X_j} (\Glt[j]{i}, X_i, \Gls[k]{i})
    \times_{X_k} (\Glt[k]{i}, X_i, \Gls[j]{i}) \times_{X_j} (\Glt[j]{i},
    X_i),\]
    we have
    \[ (u \comp^i_j u') \comp^i_k (v \comp^i_j v') = (u \comp^i_k v)
    \comp^i_j ( u' \comp^i_k v')\text{;} \]
  \item $\Lun{i}{j}, \quad i > j \ge 0$, \\
   for every $u$ in $X_i$, we have 
       \[ \Glk[i]{j}\Glt[j]{i}(u) \comp^i_j u  = u\text{;} \]
  \item $\Run{i}{j}, \quad i > j \ge 0$, \\
   for every $u$ in $X_i$, we have 
       \[ u \comp^i_j \Glk[i]{j}\Gls[j]{i}(u) = u\text{;} \]
  \item $\Fun{i}{j}, \quad i > j \ge 0$, \\
    for every 
    $(u, v)$ in $(X_i, \Gls[j]{i}) \times_{X_j} (\Glt[j]{i} ,X_i)$,
    we have
    \[ \Glk{i}(u \comp^i_j v) = \Glk{i}(u) \comp^{i+1}_j \Glk{i}(v). \]
\end{itemize}

The \ndef{category of strict \oo-categories} is the full subcategory of the
category of \oo-pre\-categories whose objects are strict \oo-categories.
We will denote it by $\wcat$.
\end{paragr}

\begin{paragr}\label{paragr:def_oo-grpd}
An \ndef{\oo-pregroupoid} $X$ is an \oo-precategory endowed with maps
\[
  \Glw[j]{i} : X_i \to X_i, \quad i > j \ge 0,
\]
such that
for every $u$ in $X_i$ for $ i \ge 1$ and $j$ such that $i > j \ge
0$, we have
\[
\Gls{i}(\Glw[j]{i}(u)) = 
\begin{cases}
\Glt{i}(u), & j = i - 1,\\
\Glw[j]{i-1}(\Gls{i}(u)), & j < i - 1,
\end{cases}
\]
and
\[
\Glt{i}(\Glw[j]{i}(u)) = 
\begin{cases}
\Gls{i}(u), & j = i - 1,\\
\Glw[j]{i-1}(\Glt{i}(u)), & j < i - 1.
\end{cases}
\]

A \ndef{morphism of \oo-pregroupoids} is a morphism of \oo-precategories
between \oo-group\-oids which is compatible with the $\Glw[j]{i}$'s in an
obvious way.

An \oo-pregroupoid $X$ is a \ndef{strict \oo-groupoid} if it is a
strict \oo-category and if it satisfies the following axioms:
\begin{itemize}
\item $\LInv{i}{j}, \quad i > j \ge 0$,\\
for every $u$ in $X_i$, we have
\[
\Glw[j]{i}(u) \comp^i_j u = \Glk[i]{j}(\Gls[j]{i}(u));
\]
\item $\RInv{i}{j}, \quad i > j \ge 0$,\\
for every $u$ in $X_i$, we have
\[
u \comp^i_j \Glw[j]{i}(u) = \Glk[i]{j}(\Glt[j]{i}(u)).
\]
\end{itemize}

The \ndef{category of strict \oo-groupoids} is the full subcategory of the
category of \oo-pre\-group\-oids whose objects are strict \oo-groupoids.
We will denote it by $\wgrp$. Note that a morphism of strict \oo-categories
between strict \oo-groupoids is automatically
a morphism of strict \oo-groupoids.

Although it is not clear from our definition, being a strict \oo-groupoid is
a property of a strict \oo-category. More precisely, if $X$ is a
strict \oo-category such that for every $i > j \ge 0$, every $i$-arrow $u$ admits
a $\comp^i_j$-inverse (i.e., an $i$-arrow $\Glw[j]{i}(u)$ satisfying the
axioms $\LInv{i}{j}$ and $\RInv{i}{j}$), then $X$ is endowed with a unique
structure of \oo-groupoid.  Note also that our axioms for strict
\oo-groupoids are highly redundant. For instance, for a strict \oo-category
to be a strict \oo-groupoids, it suffices to ask for $\comp^i_0$-inverses or
for $\comp^i_{i-1}$-inverses for every $i \ge 1$ (see Proposition 2.3 of
  \cite{AraMetWGrpd}).

One can easily check that a strict \oo-groupoid automatically satisfies
the following additional axiom:
\begin{itemize}
\item $\FInv{i}{j}{j'}, \quad i > j, j' \ge 0$,\\
for every $(u, v)$ in $(X_i, \Gls[j]{i}) \times_{X_j} (\Glt[j]{i} ,X_i)$,
we have
\[
\Glw[j']{i}(u \comp^i_j v) =
\begin{cases}
\Glw[j']{i}(v) \comp^i_{j} \Glw[j']{i}(u), & j = j', \\
\Glw[j']{i}(u) \comp^i_{j} \Glw[j']{i}(v), & j \neq j'.
\end{cases}
\]
\end{itemize}
More precisely, if an \oo-pregroupoid satisfies Axioms $\Assx$,
$\Excx$, $\Lunx$, $\Runx$ and $\RInvx$, then it satisfies
Axiom $\FInvx$ (where by the name of an axiom without subscripts, we denote
the conjunction on all meaningful subscripts of this axiom).
\end{paragr}

\section{The categories $\Thz$, $\Th$ and $\Thtld$}

\begin{paragr}
Let $n$ be a positive integer. A \ndef{table of dimensions} of width
$n$ is the datum of integers $i_1, \dots, i_n, i'_1, \dots, i'_{n-1}$ such
that
\[ i_k > i'_k\quad\text{and}\quad i_{k+1} > i'_k, \qquad 1 \le k \le n - 1. \]
We will denote such a table of dimensions by
\[
\tabdim.
\]

Let $(C, F)$ be a category under $\G$, that is a category $C$ endowed with a functor
$F : \G \to C$. We will denote in the same way the objects and morphisms
of $\G$ and their image by the functor $F$. Let
\[ T = \tabdim \]
be a table of dimensions. The \ndef{globular sum} in $C$ associated to $T$
(if it exists) is the iterated amalgamated sum
\[ (\Dn{i_1}, \Ths[i'_1]{i_1}) \amalgd{i'_1} (\Tht[i'_1]{i_2}, \Dn{i_2},
\Ths[i'_2]{i_2}) \amalgd{i'_2} \dots
\amalgd{i'_{n-1}} (\Tht[i'_{n-1}]{i_n}, \Dn{i_n}) \]
in $C$.
We will denote it briefly by
\[
\Dn{i_1} \amalgd{i'_1} \Dn{i_2} \amalgd{i'_2} \dots
\amalgd{i'_{n-1}} \Dn{i_n}.
\]
If the table of dimensions $T$ is understood, for $k$ such that $1 \le k \le n$, we will denote
by $\cepsg{k}$ the canonical morphism
\[ \cepsg{k} : \Dn{i_k} \to \Dn{i_1} \amalgd{i'_1} \Dn{i_2} \amalgd{i'_2} \dots
\amalgd{i'_{n-1}} \Dn{i_n}. \]
The pair $(C, F)$ is said to be a \ndef{globular extension} if
for every table of dimensions $T$ (of any width), the globular sum
associated to $T$ exists in $C$. We will often say, by abuse of language,
that $C$ is a globular extension. 

Let $C$ and $D$ be two globular extensions. A \ndef{morphism of globular
extensions} from $C$ to $D$ is a functor from $C$ to $D$ under $\G$
(that is such that the triangle
\[
\xymatrix@C=1pc@R=1pc{
& \G \ar[ld] \ar[rd] \\
C \ar[rr] & & D
}
\]
is commutative) which respects globular sums. We will also call such a functor a
\ndef{globular functor}. 

If $C$ is a category under $\G$ and $D$ is a category,
we will denote by $\Homgl(C, D)$ the full subcategory of the category
$\Homi(C,D)$ of functors from $C$ to $D$, whose objects are functors
$C \to D$ such that $(D, \G \to C \to D)$ is a globular extension.
In particular, when $C = \G$ (and $\G \to \G$ is the identity functor),
the objects of $\Homgl(\G, D)$ are the globular extension structures on $D$.
We will also denote this category by $\ExtGl(D)$.

Let $C$ be a globular extension. A \ndef{model} of $C$ or \ndef{globular
presheaf} on $C$ is a presheaf $G : C^\op \to \Set$ which respects
globular products (i.e., limits dual to globular sums). We will denote by
$\Mod{C}$ the full subcategory of the category $\pref{C}$ of presheaves on
$C$ whose objects are globular presheaves.
\end{paragr}

\begin{prop}[Universal property of $\Thz$]\label{prop:prop_univ_thz}
There exists a globular extension $\Thz$ such that for every category
$C$, the precomposition by the functor $\G \to \Thz$ induces an equivalence
of categories 
\[
\Homgl(\Thz, C) \to \ExtGl(C).
\]
Moreover, for every such $\Thz$, this equivalence of categories is
surjective on objects.
\end{prop}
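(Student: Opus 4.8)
The plan is to construct $\Thz$ explicitly and then verify the universal property by hand. I would take $\Thz$ to be the full subcategory of the category $\pref{\G}$ of globular sets whose objects are the globular sums of representable globes, with $\G \to \Thz$ the corestricted Yoneda embedding $\Dn{i} \mapsto \Dn{i}$, each globe being a globular sum of width one. The first task is to check that this $\Thz$ is itself a globular extension. Since $\pref{\G}$ is cocomplete, every globular sum exists there; the real point is the combinatorial closure statement that a globular sum of globular sums is again a globular sum, which amounts to concatenating the corresponding tables of dimensions. Granting this, a colimit computed in $\pref{\G}$ has all its vertices and its apex in $\Thz$, so by fullness it remains a colimit in $\Thz$: for $T$ in $\Thz$ one has $\Thz(S,T)=\pref{\G}(S,T)=\lim_k \pref{\G}(\Dn{i_k},T)=\lim_k \Thz(\Dn{i_k},T)$.

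For the universal property I would build an explicit section $\widehat{(-)}$ of the restriction functor. Given $H \in \ExtGl(C)$, that is, a functor $\G \to C$ making $C$ a globular extension, define $\widehat H \colon \Thz \to C$ on objects by sending the globular sum with table $T$ to the globular sum in $C$ with the same table (which exists by hypothesis), and $\Dn{i} \mapsto H(\Dn{i})$. On morphisms I would use the colimit presentation: a morphism $f \colon S \to S'$ of $\Thz$ is determined by its composites $f \circ \ceps{k} \colon \Dn{i_k} \to S'$ with the cocone inclusions of $S$, and each such map out of a globe factors through the cocone of $S'$ as $\ceps{l} \circ g$ for some coglobular operator $g$ (a composite of the coface maps $\Ths{i}$ and $\Tht{i}$). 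One then declares $\widehat H(f)$ to be the unique morphism out of the colimit $\widehat H(S)$ whose $k$-th component is the corresponding composite of $H(g)$ with the cocone of $\widehat H(S')$.

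The main obstacle is the coherent well-definedness of $\widehat H$ on morphisms: the factorization $\ceps{l} \circ g$ of a cell is not unique, and one must check that competing factorizations become equal after applying $H$ and the cocone of $\widehat H(S')$. This is exactly where the coglobular relations in $\G$ and the pushout identifications defining the globular sum in $C$ get consumed; concretely it reduces to the fact that any two expressions of a cell of a pasting scheme differ by relations that are already imposed in the colimit defining $\widehat H(S')$, because $\widehat H(S')$ is the globular sum in $C$ attached to the very same table, hence the colimit of the $H$-image of the same diagram. Making this rigorous requires a normal form for maps out of a globe into a pasting scheme, which is the genuine combinatorial content. Once well-definedness is in place, functoriality and the identity $\widehat H \circ (\G \to \Thz) = H$ are formal, so $\widehat H$ lies in $\Homgl(\Thz, C)$ and restriction is literally surjective on objects, which yields the ``moreover'' clause for the constructed $\Thz$.

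Finally, to upgrade surjectivity on objects to an equivalence I would prove the restriction functor is fully faithful. The key input, which must be established as a lemma, is that the functors in $\Homgl(\Thz, C)$ respect globular sums, i.e.\ send each cocone $\ceps{k} \colon \Dn{i_k} \to S$ to the colimit cocone of the corresponding globular sum in $C$; this is the second delicate point, since it is not formal that an arbitrary functor preserves the pushouts involved. Granting it, and using that each $S$ is the globular sum of its globes, a natural transformation $\alpha$ between two such functors is forced on the component $\alpha_S$ by naturality against the $\ceps{k}$ and exists by the universal property of the colimit, so restriction is bijective on natural transformations. For the clause ``for every such $\Thz$'', I would argue abstractly by transport of structure: given $G \in \Homgl(\Thz, C)$ and an isomorphism $\theta\colon G\circ(\G\to\Thz) \cong H$ in $\ExtGl(C)$, one replaces $G$ by an isomorphic functor $G'$ agreeing with $H$ on the globes, so that $G' \circ (\G\to\Thz) = H$ on the nose and $H$ is genuinely hit.
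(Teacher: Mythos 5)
Your construction of $\Thz$ --- the full subcategory of $\pref{\G}$ whose objects are the globular sums of representables --- is exactly the paper's, but your verification of the universal property takes a genuinely different route. The paper never extends functors by hand: it embeds $C$ into the cocomplete category $\copref{C}$ of copresheaves via the (cocontinuous, fully faithful) Yoneda functor, invokes the universal property of $\pref{\G}$ as the free cocompletion of $\G$ to extend any object of $\ExtGl(C)$ cocontinuously over all of $\pref{\G}$, restricts to $\Thz$, and observes that the result factors back through $C$ because the relevant colimits already exist there and Yoneda preserves them; all of your ``delicate points'' are absorbed into that one standard fact, at the cost of the detour through $\copref{C}$. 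Your hands-on extension $\widehat H$ is viable and more self-contained, but you leave its central step open while overestimating its difficulty: no normal form for cells of a pasting scheme is needed. Colimits in $\pref{\G}$ are computed pointwise, so a cell $\Dn{i} \to S'$ is an element of a colimit of hom-sets of $\G$ indexed by the zigzag of the table $T'$, and two factorizations $\ceps{l}g = \ceps{l'}g'$ are therefore connected by a finite chain of elementary identifications of the form $\ceps{k}\Ths[i'_k]{i_k}h \sim \ceps{k+1}\Tht[i'_k]{i_{k+1}}h$; applying $H$ and the cocone of the globular sum in $C$ preserves each such elementary identification by the very definition of that amalgamated sum, which is precisely the reduction you announce. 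The same observation shows that the components you assign to $\widehat H(f)$ form a cocone over the diagram defining $\widehat H(S)$, so the induced map exists and functoriality follows.

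Two smaller points. Your second ``delicate point'' --- that functors in $\Homgl(\Thz,C)$ respect globular sums --- is not a lemma to be proved but part of the intended reading of $\Homgl$ (a globular functor respects globular sums by definition); under the weaker literal reading the restriction functor would not even be faithful, so the statement itself forces this reading. And your transport-of-structure argument for the ``moreover'' clause tacitly needs $\G \to \Thz$ to be injective on objects (true, since the globes are pairwise non-isomorphic and this is invariant under equivalence under $\G$); the paper instead gets strict surjectivity on objects from the fact that the left Kan extension along Yoneda can be chosen to commute on the nose.
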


\begin{proof}
Consider the Yoneda functor $\G \to \pref{\G}$. For each table of
dimensions $T$, choose a globular sum $S_T$ in $\pref{\G}$. Let $\Thz$ be the
full subcategory of $\pref{\G}$ whose objects are the $S_T$'s.

Before proving that $\Thz$ has the desired universal property, let us
introduce some notations. If $A$ is a category, we will denote by
$\copref{A}$ the category of copresheaves on $A$, that is the category
$\Homi(A, \Set)^\op$. If $B$ is a second category, we will denote by
$\Home(A, B)$ the full subcategory of $\Homi(A, B)$ whose objects are
functors preserving inductive limits.

Let $C$ be a category. We will construct a quasi-inverse to the canonical
functor
\[ U : \Homgl(\Thz, C) \to \ExtGl(C). \]
Let
\[ U' : \Home(\pref{\G}, \copref{C}) \to \Homi(\G, \copref{C}) \]
be the functor induced by the Yoneda functor $\G \to \pref{\G}$.
Since the category $\copref{C}$ is cocomplete, the universal property of
$\pref{\G}$ gives us a quasi-inverse $L'$ of $U'$. 
Consider now the functor $G$ defined by the composition
\[
\ExtGl(C) \to \Homi(\G, \copref{C}) \xrightarrow{L'} \Home(\pref{\G}, \copref{C})
\to \Homi(\Thz, \copref{C}),
\]
where the first and the last functors are respectively induced by the
(contravariant) Yoneda functor $C \to \copref{C}$ and the inclusion $\Thz
\to \pref{\G}$.  Since the Yoneda functor $C \to \copref{C}$ preserves
inductive limits, the functor $G$ factors through $\Homgl(\Thz, C)$ and
gives rise to a functor
\[ L : \ExtGl(C) \to \Homgl(\Thz, C). \]
One easily checks that $L$ is a quasi-inverse of $U$.

Since the second assertion is invariant under equivalences of categories
under $\G$, it suffices to prove it for the category $\Thz$ we have just
built. The assertion then follows from the fact that the functor $U'$ is
surjective on objects.
\end{proof}

\begin{paragr}
Two globular extensions satisfying the above universal property
are uniquely equivalent up to a unique natural isomorphism. 
One can show that the objects of such a globular extension have no
automorphisms. In particular, a skeletal version of such a globular
extension (i.e., such that isomorphic objects are equal) is
unique up to a unique isomorphism. We will denote by $\Thz$ this globular
extension.

Note that the above universal property states in particular that $\Thz$ is
the free globular completion of $\G$ in the following sense:
if $(C, F : \G \to C)$ is a globular extension, there exists a globular
functor $\Thz \to C$ unique up to a unique natural isomorphism. More
precisely, the choice of such a functor $\Thz \to C$ amounts to the choice
of a globular sum for every table of dimensions.

The category $\Thz$ defined above is canonically isomorphic to the category
$\Thz$ defined in terms of finite planar rooted trees by Berger in
\cite{BergerNerve}. Berger's definition is explained in detail in Section
2.3 of \cite{AraThese}. See also Section 4 of \cite{WeberGen} or
\cite{MakZaw} for a description of the bijection between tables of dimensions
and finite planar rooted trees. Note that tables of dimensions are called
zig-zag sequences in \cite{WeberGen} and ud-vectors (standing for up and
down vectors) in \cite{MakZaw}.
\end{paragr}

\begin{paragr}
Let $C$ be a globular extension. If $X$ is a globular presheaf on $C$, then
by restricting it to $\G$, we obtain a globular set.
We thus have a canonical functor
\[ \Mod{C} \to \pref{\G}. \]
\end{paragr}

\begin{prop}\label{prop:mod_thz}
The functor
\[
\Mod{\Thz} \to \pref{\G}
\]
is an equivalence of categories.
\end{prop}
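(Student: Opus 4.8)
The plan is to produce an explicit quasi-inverse. Since both $\Mod{\Thz}$ and the restriction functor of the statement are invariant under equivalences of globular extensions under $\G$, I may replace $\Thz$ by the concrete model constructed in the proof of Proposition~\ref{prop:prop_univ_thz}: a full subcategory of $\pref{\G}$ whose objects are globular sums $S_T$, each of which is, by construction, a colimit in $\pref{\G}$ of representable disks $\Dn{i_k}$ taken along the cocone $(\cepsg{k})_k$. Write $\iota \colon \Thz \to \pref{\G}$ for the inclusion and $R \colon \Mod{\Thz} \to \pref{\G}$ for the functor under consideration, so that $R(G) = G|_{\G}$.

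First I would introduce the candidate quasi-inverse, the nerve functor $N \colon \pref{\G} \to \pref{\Thz}$ sending a globular set $X$ to $\Hom_{\pref{\G}}(\iota(-), X)$. Since $\Hom_{\pref{\G}}(-, X)$ carries the colimit cocone exhibiting $\iota(S_T)$ to a limit cone, the set $N(X)(S_T)$ is canonically the globular product of the sets $N(X)(\Dn{i_k}) = X_{i_k}$; hence $N(X)$ respects globular products, and $N$ factors through $\Mod{\Thz}$.

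I would then analyse the two composites. That $R \circ N \cong \mathrm{id}_{\pref{\G}}$ is immediate from the Yoneda lemma, since $N(X)(\Dn{i}) = \Hom_{\pref{\G}}(\Dn{i}, X) = X_i$ naturally in $X$. For the other composite, given $G \in \Mod{\Thz}$ I would compare, for each table $T$, the set $N(R(G))(S_T) = \Hom_{\pref{\G}}(\iota(S_T), G|_{\G}) = \lim_k G(\Dn{i_k})$ with $G(S_T)$: the cone $\big(G(\cepsg{k}) \colon G(S_T) \to G(\Dn{i_k})\big)_k$ induces the comparison map $G(S_T) \to \lim_k G(\Dn{i_k})$, which is invertible precisely because $G$ respects globular products.

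The main obstacle is the naturality of this last family of isomorphisms, both in $S_T \in \Thz$ and in $G$, so that they assemble into an isomorphism $\mathrm{id}_{\Mod{\Thz}} \cong N \circ R$. The cleanest way to settle this is to bypass the hands-on check: the representable disks are dense in $\pref{\G}$, hence so is the larger full subcategory $\Thz$, and density of $\Thz$ says exactly that $N$ is fully faithful. Combined with the computation of $N \circ R$, which shows that every $G \in \Mod{\Thz}$ lies in the essential image of $N$, this makes $N \colon \pref{\G} \to \Mod{\Thz}$ an equivalence of categories. Finally, from $R \circ N \cong \mathrm{id}_{\pref{\G}}$ one concludes that $R$ is a quasi-inverse of $N$, and therefore itself an equivalence, as claimed. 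Should one prefer to avoid density, the naturality can instead be verified directly, using that a morphism of $\Thz$ is a morphism of the underlying presheaves on $\G$ and that both functors send it to the induced map of globular products along the cocones $(\cepsg{k})_k$; this is routine but demands care in tracking the comparison isomorphisms.
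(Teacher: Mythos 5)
Your argument is correct, but it takes a genuinely different route from the paper. The paper's proof is a one-liner: it applies the already-established universal property of $\Thz$ (Proposition~\ref{prop:prop_univ_thz}) to the cocomplete category $C = \Set^\op$, under which $\ExtGl(\Set^\op)$ becomes $\pref{\G}^\op$ and $\Homgl(\Thz, \Set^\op)$ becomes $\Mod{\Thz}^\op$, so the statement is literally a special case. You instead unwind that universal property by hand in this special case: you fix the concrete model of $\Thz$ as a full subcategory of $\pref{\G}$ and exhibit the restricted Yoneda (nerve) functor $N$ as an explicit quasi-inverse; your $N$ is essentially the right adjoint counterpart of the functor $L'$ appearing in the paper's proof of Proposition~\ref{prop:prop_univ_thz}, specialized to $\Set^\op$. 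What your approach buys is an explicit, self-contained description of the inverse equivalence without having to parse the categories $\Homgl$ and $\ExtGl$; what the paper's approach buys is brevity and reuse of machinery. One soft spot to be aware of: the density argument only delivers full faithfulness of $N$, and essential surjectivity onto $\Mod{\Thz}$ still requires the pointwise isomorphisms $G(S_T) \to N(R(G))(S_T)$ to assemble into a map of presheaves --- pointwise bijectivity alone is not an isomorphism in $\pref{\Thz}$ --- so the naturality check is not actually bypassed. It is, however, genuinely routine, since the comparison map is the canonical one (Yoneda followed by restriction along $\G \to \Thz$ and the identification $\iota(S_T) \cong S_T|_{\G}$), hence natural by construction; you acknowledge and correctly sketch this at the end, so I regard the proof as complete.
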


\begin{proof}
This is exactly what the universal property of $\Thz$ claims when applied to
$\Set^\op$.
\end{proof}

\begin{paragr}
If $C$ is a globular extension, the Yoneda functor
$C \to \pref{C}$ factors through $\Mod{C}$. We thus have a functor
$C \to \Mod{C}$.

By the previous proposition, the functor 
\[
\Thz \to \Mod{\Thz} \to \pref{\G}
\]
is fully faithful. A globular set which is in the image of this functor will
be called a \ndef{globular pasting scheme}. We can thus view $\Thz$ as the
full subcategory of $\pref{\G}$ whose objects are globular pasting schemes.

Note that in the bijection between tables of dimensions and finite planar
rooted trees, the above functor from $\Thz$ to $\pref{\G}$ associates to a
tree $T$ the globular set $T^\ast$ introduced by Batanin in
\cite{BataninWCat}. The globular pasting schemes can also be characterized
as the cardinals (in the sense of Definition 4.16 of \cite{WeberFam})
of the free strict \oo-category functor $\pref{\G} \to \wcat$ (see
Section 9 of \cite{WeberGen}).
\end{paragr}

\begin{paragr}
A \ndef{globular extension under $\Thz$} is a category $C$ endowed with a
functor $\Thz \to C$ such that $(C, \G \to \Thz \to C)$ is a globular
extension.  If $C$ is a globular extension under $\Thz$, the globular sum
associated to a table of dimensions is uniquely defined. A \ndef{morphism of
globular extensions under $\Thz$} is a functor under $\Thz$ between globular
extensions under $\Thz$. Note that such a functor automatically respects
globular sums.

If $C$ is a category under $\Thz$ and $D$ is a category,
we will denote by $\Homglz(C, D)$ the full subcategory of the category
$\Homi(C,D)$ whose objects are functors $C \to D$ such that $(D, \Thz \to C
\to D)$ is a globular extension under $\Thz$.
\end{paragr}

\begin{prop}
Let $C$ be a category under $\Thz$. There exists a globular extension
$\overline{C}$ under $\Thz$, endowed with a functor $C \to \overline{C}$
under $\Thz$ such that the functor $C \to \overline{C}$ induces an
isomorphism of categories
\[ \Homglz(\overline{C}, D) \to \Homglz(C, D). \]
\end{prop}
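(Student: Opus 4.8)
The plan is to realize $\overline{C}$ as the free globular extension under $\Thz$ generated by $C$, and to build it inside the presheaf category $\pref{C}$. The guiding observation is that a globular sum only involves the objects $\Dn{i}$, which already belong to $C$ through the structure functor $\Phi\colon\Thz\to C$; the sole thing $\overline{C}$ must add to $C$ is the universal property turning each image $\Phi(S_T)$ of a globular sum object $S_T\in\Thz$ into a genuine inductive limit. A naive full subcategory of $\pref{C}$ spanned by representables and formal globular sums will fail to be a globular extension \emph{under} $\Thz$, since the representable $\Phi(S_T)$ is in general not the inductive limit of the globular diagram; I will repair this by a reflective localization forcing the two to coincide.

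Concretely, I would write $y\colon C\to\pref{C}$ for the Yoneda functor and, for each table of dimensions $T$, form in the cocomplete category $\pref{C}$ the globular sum
\[
\sigma_T = (\Dn{i_1},\Ths[i'_1]{i_1})\amalgd{i'_1}\dots\amalgd{i'_{n-1}}(\Tht[i'_{n-1}]{i_n},\Dn{i_n})
\]
of the representables $y\Phi(\Dn{i_k})$, together with the canonical comparison morphism $\phi_T\colon\sigma_T\to y\Phi(S_T)$. Let $\mathcal{L}\subseteq\pref{C}$ be the reflective localization at the set $\{\phi_T\}_T$, with reflection $r$ (it exists by the small object argument, the $\phi_T$ forming a set). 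I define $\overline{C}$ to have the same objects as $C$ and hom-sets $\Hom_{\overline{C}}(c,c')=\Hom_{\mathcal{L}}(r\,y\,c,\,r\,y\,c')$, the functor $C\to\overline{C}$ being the identity on objects and $r\,y$ on morphisms. As $r$ is a left adjoint, it preserves the inductive limit defining $\sigma_T$ and inverts $\phi_T$; hence in $\overline{C}$ the image of $S_T$ is the globular sum of the $\Dn{i_k}$. This shows at once that $\overline{C}$ is a globular extension under $\Thz$ and that $C\to\overline{C}$ is a functor under $\Thz$, both triangles over $\Thz$ commuting on the nose.

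For the universal property I would proceed as in the proof of Proposition~\ref{prop:prop_univ_thz}. Fix $D$ and $f\in\Homglz(C,D)$. Composing $f$ with the inductive-limit-preserving Yoneda functor $D\to\copref{D}$ and using the universal property of the free cocompletion $\pref{C}$ (the category $\copref{D}$ being cocomplete), I extend it to a functor $\hat{f}\in\Home(\pref{C},\copref{D})$. Saying that $f$ lies in $\Homglz(C,D)$ means the induced functor $\Thz\to D$ preserves globular sums, which, since $D\to\copref{D}$ preserves and reflects the relevant inductive limits, is exactly the condition that $\hat{f}$ sends each $\phi_T$ to an isomorphism; as $\hat{f}$ preserves inductive limits it then inverts every morphism inverted by $r$ and factors uniquely through $r$. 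Restricting the factorization to $\overline{C}$ and corestricting along the fully faithful $D\to\copref{D}$ produces a unique $g\in\Homglz(\overline{C},D)$ with $g\circ(C\to\overline{C})=f$. Since $C\to\overline{C}$ is the identity on objects this is a bijection on objects of the Hom-categories; applying the same argument componentwise yields the bijection on natural transformations, so $\Homglz(\overline{C},D)\to\Homglz(C,D)$ is an isomorphism of categories.

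I expect the delicate point to be the uniqueness half of this last step, namely that the extension $g$ is \emph{unique} and that the correspondence is strictly bijective on natural transformations rather than merely essentially surjective on functors. This amounts to checking that every morphism of $\overline{C}$ is generated by morphisms coming from $C$ together with the images of the canonical morphisms $\cepsg{k}$ exhibiting the $\Phi(S_T)$ as globular sums---equivalently, that the reflections $r\,y\,c$ generate $\mathcal{L}$ under inductive limits---so that any globular-sum-preserving functor out of $\overline{C}$ agreeing with $f$ on $C$ is forced on all of $\overline{C}$. Keeping the whole construction strict, so that the comparison is an isomorphism of categories strictly compatible with the functors from $\Thz$ rather than an equivalence, is precisely what requires the identity-on-objects presentation adopted above.
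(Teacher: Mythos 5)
Your route differs from the paper's, which simply invokes Ehresmann's construction of the universal prolongation forcing a set of distinguished cocones to become colimit cocones (a syntactic, transfinite generators-and-relations construction). Building $\overline{C}$ as the identity-on-objects full image of $r\circ y$ in the reflective localization $\mathcal{L}$ of $\pref{C}$ at the comparison maps $\phi_T$ is a legitimate idea, and the existence half of the universal property does go through essentially as you describe --- modulo one point of strictness: the factorization of $\hat{f}$ through $r$ holds only up to the natural isomorphism $\hat{f}(\eta)$, so to obtain the on-the-nose equality $g\circ(C\to\overline{C})=f$ required for an \emph{isomorphism} (not an equivalence) of categories you must conjugate by the isomorphisms $\hat{f}(\eta_{yc})$; this works, but it is not addressed.

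The genuine gap is exactly where you suspect it, and the substitute condition you propose does not close it. Uniqueness of the extension $g$, and the bijection on natural transformations, both require that every element of $\Hom_{\overline{C}}(c,c')=\bigl(r(yc')\bigr)(c)$ be a composite of morphisms coming from $C$ and of the canonical factorizations through globular sums. The condition you offer instead --- that the objects $ryc$ generate $\mathcal{L}$ under inductive limits --- is true but useless here: a functor $g$ in $\Homglz(\overline{C},D)$ is only required to send the distinguished globular-sum cocones to colimit cocones, not to preserve arbitrary inductive limits, so generation of $\mathcal{L}$ under all inductive limits gives no control over $g$ on morphisms. What actually proves the needed generation statement is unwinding the small object argument that builds the unit $yc'\to r(yc')$ as a transfinite composite of pushouts along the $\phi_T$ and their codiagonals, and checking by transfinite induction that every new element appearing at each stage is (an identification of) a composite $h\circ m$ with $m$ a morphism of $C$ and $h$ the unique factorization, through a globular sum, of a cocone of previously constructed morphisms; any $g$ sending the distinguished cocones to colimit cocones is then forced on $h$, and the same induction gives naturality of a transformation defined componentwise. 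At that point one is carrying out, inside $\pref{C}$, precisely the generators-and-relations construction that the paper cites from Ehresmann --- so the localization wrapper buys little, and as written your proof is incomplete at its decisive step.
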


\begin{proof}
This is a special case of a standard categorical construction (see
Proposition 3 of \cite{Ehresmann}). See also Section 2.6 of \cite{AraThese}
and Paragraph 3.10 of \cite{MaltsiGrCat}.
\end{proof}

\begin{paragr}
If $C$ is a category under $\Thz$, the globular extension $\overline{C}$ of
the previous proposition (which is unique up to a unique isomorphism) will
be called the \ndef{globular completion} of~$C$. Note that the functor $C
\to \overline{C}$ is bijective on objects.
\end{paragr}

\begin{paragr}\label{paragr:precat}
A \ndef{precategorical globular extension} is a globular extension $C$ under
$\Thz$ endowed with morphisms
\[
\begin{split}
\Thn[j]{i} & : \Dn{i} \to \Dn{i} \amalgd{j} \Dn{i},\qquad i > j \ge 0, \\
\Thk{i} & : \Dn{i+1} \to \Dn{i},\qquad i \ge 0,
\end{split}
\]
such that
\begin{enumerate}
\item\label{item:sbg_n} for every $i, j$ such that $i > j \ge 0$, we have
\[ 
\Thn[j]{i}\Ths{i} =
\begin{cases}
    \epsilon^{}_2\Ths{i}, & j = i - 1, \\
    \big(\Ths{i} \amalgd{j} \Ths{i}\big)\Thn[j]{i-1}
    & j < i - 1,
\end{cases}
\]
and
\[
\Thn[j]{i}\Tht{i} =
\begin{cases}
    \epsilon^{}_1\Tht{i}, & j = i - 1, \\
    \big(\Tht{i} \amalgd{j} \Tht{i}\big)\Thn[j]{i-1}
    & j < i - 1,
\end{cases}
\]
where $\epsilon^{}_1,\epsilon^{}_2 : \Dn{i} \to \Dn{i}\amalgd{i-1} \Dn{i}$
denote the canonical morphisms;
\item\label{item:sbg_k} for every $i \ge 0$, we have
\[ \Thk{i}\Ths{i+1} = \id{\Dn{i}} \quad\text{and}\quad \Thk{i}\Tht{i+1} =
\id{\Dn{i}}\text{.} \]
\end{enumerate}

If $C$ is a precategorical globular extension, for $i \ge j \ge 0$, we will denote
by $\Thk[i]{j}$ the morphism from $\Dn{i}$ to $\Dn{j}$ defined by
\[ \Thk[i]{j} = \Thk{j}\dots\Thk{i-2}\Thk{i-1}, \]
and, for $i > 0$, we set
\[ \Thn{i} = \Thn[i-1]{i}. \]

A \ndef{morphism of precategorical globular extensions} is a morphism of
globular extensions under $\Thz$ between precategorical globular extensions
preserving the $\Thn[j]{i}$'s and the $\Thk{i}$'s.

A precategorical globular extension is \ndef{categorical} if it satisfies
the following axioms:
  \begin{itemize}
    \item $\Ass{i}{j}, \quad i > j \ge 0$,\\
      the following square commutes:
      \[
      \xymatrixrowsep{3pc}
      \xymatrixcolsep{4.5pc}
       \xymatrix{
     \Dn{i} \ar[r]^{\Thn[j]{i}} \ar[d]_{\Thn[j]{i}} &
      \Dn{i} \amalgd{j} \Dn{i} \ar[d]^{\id{\Dn{i}} \amalgd{j} \Thn[j]{i}}\\
      \Dn{i} \amalgd{j} \Dn{i} \ar[r]_-{\Thn[j]{i} \amalgd{j}
      \id{\Dn{i}}} &
      \Dn{i} \amalgd{j} \Dn{i} \amalgd{j} \Dn{i} \pbox{;}
       }
      \]
    \item $\Exc{i}{j}{k}, \quad i > j > k \ge 0$,\\
      the following diagram commutes:
      \[
      \xymatrixrowsep{3pc}
      \xymatrixcolsep{-.3pc}
      \xymatrix{
      & \Dn{i} \ar[ld]_{\Thn[k]{i}} \ar[rd]^{\Thn[j]{i}} \\
      \Dn{i} \amalgd{k} \Dn{i}\ar[d]_{\Thn[j]{i} \amalgd{k} \Thn[j]{i}}& &
      \Dn{i} \amalgd{j} \Dn{i} \ar[d]^{\Thn[k]{i}
      \amalgX{\Thn[k]{j}} \Thn[k]{i}}\\
      (\Dn{i} \amalgd{j} \Dn{i}) \amalgd{k} (\Dn{i} \amalgd{j}
      \Dn{i}) \ar[rr]^-{\sim} & &
      (\Dn{i} \amalgd{k} \Dn{i}) \amalgX{\Dn{j} \amalgd{k} \Dn{j}}
      (\Dn{i} \amalgd{k} \Dn{i}),
      }
      \]
      where the left amalgamated sum is
      \[
      (\Dn{i} \amalgd{k} \Dn{i},
      \Ths[j]{i} \amalgd{k} \Ths[j]{i}) \amalgX{\Dn{j} \amalgd{k} \Dn{j}}
      (\Tht[j]{i} \amalgd{k} \Tht[j]{i}, \Dn{i} \amalgd{k}
      \Dn{i})\pbox{;}
      \]
      
    \item $\Lun{i}{j},\quad i > j \ge 0$,\\
      the following triangle commutes:
   \[
      \xymatrixrowsep{3pc}
      \xymatrixcolsep{4pc}
       \xymatrix{
       \Dn{i} \ar[d]_{\Thn[j]{i}} \ar[dr]^{\sim} \\
      \Dn{i} \amalgd{j} \Dn{i} 
      \ar[r]_{\Thk[i]{j} \amalgd{j} \id{\Dn{i}}} &
      \Dn{j} \amalgd{j} \Dn{i} \pbox{;}
      }
      \]
    \item $\Run{i}{j},\quad i > j \ge 0$,\\
      the following triangle commutes:
     \[
      \xymatrixrowsep{3pc}
      \xymatrixcolsep{4pc}
       \xymatrix{
       \Dn{i} \ar[d]_{\Thn[j]{i}} \ar[dr]^{\sim} \\
      \Dn{i} \amalgd{j} \Dn{i} 
      \ar[r]_{\id{\Dn{i}} \amalgd{j} \Thk[i]{j}}&
      \Dn{i} \amalgd{j} \Dn{j} \pbox{;}
      }
      \]
    \item $\Fun{i}{j},\quad i > j \ge 0$,\\
      the following square commutes:
      \[
      \xymatrixrowsep{3pc}
      \xymatrixcolsep{4.5pc}
      \xymatrix{
      \Dn{i+1} \ar[r]^-{\Thn[j]{i+1}} \ar[d]_{\Thk{i}} & 
      \Dn{i+1}\amalgd{j} \Dn{i+1} \ar[d]^{\Thk{i} \amalgd{j}
      \Thk{i}} \\
      \Dn{i} \ar[r]_-{\Thn[j]{i}} &\Dn{i}\amalgd{j} \Dn{i} \pbox{.}
       }
      \]
  \end{itemize}

The \ndef{category of categorical globular extensions} is the full
subcategory of the category of precategorical globular extensions whose
objects are categorical globular extensions.

If $C$ is a category, we will denote by $\ExtCat(C)$ the category whose
objects are categorical globular extension structures on $C$, i.e., functors
from $\Thz$ to $C$ endowed with $\Thn[j]{i}$'s and $\Thk{i}$'s making $C$ a
categorical globular extension, and whose morphisms are natural transformations.
\end{paragr}

\begin{prop}[Universal property of $\Th$]\label{prop:prop_univ_th}
There exists a categorical globular extension $\Th$ such
that for every category $C$, 
the precomposition by the functor $\Thz \to \Th$ induces an isomorphism of categories
\[
\Homglz(\Th, C) \to \ExtCat(C).
\]
\end{prop}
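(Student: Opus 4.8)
The plan is to construct $\Th$ by a presentation over $\Thz$: freely adjoin the operations $\Thn[j]{i}$ and $\Thk{i}$, impose the defining relations of a categorical globular extension, and globular-complete using the globular completion $\overline{(-)}$ of the preceding proposition, so that the resulting category becomes the \emph{universal} categorical globular extension. The universal property will then follow by stacking the (already isomorphism-level) universal property of the globular completion on top of the freeness of the generator-adjunction, in the same spirit as the passage from Proposition~\ref{prop:prop_univ_thz} to the completion proposition.

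Concretely, I would first form the category $E$ under $\Thz$ obtained from $\Thz$ by freely adjoining, for all relevant indices, formal arrows $\Thn[j]{i} : \Dn{i} \to \Dn{i} \amalgd{j} \Dn{i}$ and $\Thk{i} : \Dn{i+1} \to \Dn{i}$; all the declared sources and targets are objects of $\Thz$ (the target of $\Thn[j]{i}$ being the globular sum for the evident table of dimensions), so $E$ is a genuine category under $\Thz$ with the same objects. Next I would pass to the quotient $E'$ of $E$ by the congruence generated by the precategorical relations (items (1) and (2) of the definition), and then take its globular completion $P = \overline{E'}$, which is a globular extension under $\Thz$. In $P$ the arrows $\Thn[j]{i}$ and $\Thk{i}$ persist, and, since $P$ now has all globular sums, the induced morphisms occurring in the categorical axioms (such as $\id{\Dn{i}} \amalgd{j} \Thn[j]{i}$ and $\Thk{i} \amalgd{j} \Thk{i}$) are defined; thus $P$ is a precategorical globular extension. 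Finally I would impose the categorical axioms $\Assx$, $\Excx$, $\Lunx$, $\Runx$ and $\Funx$ by quotienting $P$ by the congruence they generate, and globular-complete once more to obtain the category $\Th$, which is then a categorical globular extension.

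To verify the universal property I would compute $\Homglz(\Th, C)$ by unwinding the construction from the outside in. The universal property of the globular completion turns $\Homglz(\Th, C)$ into the category of globular-sum-preserving functors out of the categorical quotient of $P$; applying that universal property once more replaces $P$ by $E'$; and the freeness of $E$ over $\Thz$ identifies a functor out of $E$ exhibiting $C$ as a globular extension under $\Thz$ with the data of such a structure on $C$ together with arbitrary choices of images for the generators $\Thn[j]{i}$ and $\Thk{i}$. Since a globular-sum-preserving functor sends each induced morphism to the corresponding induced morphism in $C$, the precategorical relations and the categorical axioms translate verbatim into the requirement that the chosen images make $C$ a categorical globular extension. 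This identifies the data with an object of $\ExtCat(C)$; the identification is functorial in the natural transformations on either side, and bijective on objects, so it is an isomorphism of categories.

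The main obstacle I anticipate lies in the interaction between quotienting and globular completion in the last two steps: quotienting the precategorical globular extension $P$ by the categorical axioms need not preserve the property of being a globular extension, since arbitrary quotient functors do not preserve colimits, which is what forces the second completion. One must then check that this re-completion neither destroys the imposed axioms nor the correspondence above, and, crucially, that the whole chain yields an isomorphism and not merely an equivalence. The point that rescues the argument is that each elementary step --- adjoining generators, quotienting by a congruence, and globular completion --- represents its respective functor on the nose (the completion universal property is already stated as an isomorphism), so that their composite represents $\ExtCat$ up to isomorphism; the underlying rigidity rests ultimately on the fact, recorded after Proposition~\ref{prop:prop_univ_thz}, that the objects of $\Thz$ admit no nontrivial automorphisms, so there are no choices left to rigidify.
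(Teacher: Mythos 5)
Your construction coincides with the paper's own proof: the paper forms the globular completion of $\Thz$ with the $\Thn[j]{i}$'s and $\Thk{i}$'s adjoined subject to the precategorical relations, then globular-completes again after imposing the categorical axioms, and declares the universal property ``clear'' where you unwind it step by step. The proposal is correct and takes essentially the same approach, only with the verification (correctly) spelled out in more detail.
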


\begin{proof}
\newcommand\Thpcat{\Theta_{\mathrm{pcat}}}
Let $\Thpcat$ be the globular completion of the category obtained from
$\Thz$ by formally adjoining morphisms $\Thk{i}$ and $\Thn[j]{i}$
satisfying the relations of precategorical globular extensions.

Let now $\Th$ be the globular completion of the category obtained from
$\Thpcat$ by formally imposing the commutativity of the diagrams appearing
in the definition of categorical globular extensions.

It is clear that $\Th$ has the desired universal property.
\end{proof}

\begin{paragr}
We will denote by $\Th$ the categorical globular extension of the previous
proposition (which is unique up to a unique isomorphism). 
We will see that this category is canonically isomorphic to Joyal's cell
category introduced in \cite{JoyalTheta}.
Note that the functor $\Thz \to \Th$ is bijective on objects. 
\end{paragr}

\begin{paragr}
Let $C$ be a categorical globular extension.  If $X$ is a globular presheaf
on $C$, the globular set obtained by restricting $X$ to $\G$ is
canonically endowed with a structure of strict \oo-category whose compositions are
the
\[ \comp_j^i = X(\Thn[j]{i}) : X_i \times_{X_j} X_i \to X_i, \quad i > j \ge 0, \]
and whose units are the
\[ \Glk{i} = X(\Thk{i}) : X_i \to X_{i+1}, \quad i > 0. \]
We thus have a canonical functor
\[ \Mod{C} \to \wcat. \]
\end{paragr}

\begin{prop}\label{prop:mod_th}
The functor
\[ \Mod{\Th} \to \wcat \]
is an equivalence of categories.
\end{prop}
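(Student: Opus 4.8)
The plan is to deduce the statement from the universal property of $\Th$ (Proposition~\ref{prop:prop_univ_th}) applied to the category $C = \Set^\op$, in exactly the same spirit as the one-line proof of Proposition~\ref{prop:mod_thz}. Applying that universal property to $\Set^\op$ furnishes an isomorphism of categories
\[
\Homglz(\Th, \Set^\op) \xrightarrow{\ \sim\ } \ExtCat(\Set^\op),
\]
and the work consists in identifying the source with $\Mod{\Th}$ and the target with $\wcat$, compatibly with the canonical functor of the preceding paragraph.

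For the source, I would identify $\Homglz(\Th, \Set^\op)$ with $\Mod{\Th}$ exactly as in the $\Thz$ case: a presheaf $X$ on $\Th$ is the same datum as a functor $X : \Th \to \Set^\op$, and $X$ respects globular products (i.e.\ is a model of $\Th$) precisely when the corresponding functor $\Th \to \Set^\op$ respects globular sums, which is the condition defining $\Homglz(\Th, \Set^\op)$.

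The main work is the identification of $\ExtCat(\Set^\op)$ with $\wcat$. A categorical globular extension structure on $\Set^\op$ consists first of a globular extension structure under $\Thz$ on $\Set^\op$, which by Proposition~\ref{prop:mod_thz} is nothing but a globular set $X$ (its restriction to $\G$); here the globular sum $\Dn{i} \amalgd{j} \Dn{i}$ computed in $\Set^\op$ is the globular product $X_i \times_{X_j} X_i$. Reversing the arrows (since we work in $\Set^\op$), the structure morphisms become maps
\[
\comp^i_j = X(\Thn[j]{i}) : X_i \times_{X_j} X_i \to X_i, \qquad \Glk{i} = X(\Thk{i}) : X_i \to X_{i+1},
\]
and I would check that conditions \ref{item:sbg_n} and \ref{item:sbg_k} of a precategorical globular extension dualize precisely to the source/target compatibilities (1) and (2) defining an \oo-precategory. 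One then verifies that the commutative diagrams $\Ass{i}{j}$, $\Exc{i}{j}{k}$, $\Lun{i}{j}$, $\Run{i}{j}$ and $\Fun{i}{j}$ in $\Set^\op$ dualize, arrow by arrow, to the equations of the corresponding axioms $\Assx$, $\Excx$, $\Lunx$, $\Runx$ and $\Funx$ for a strict \oo-category. Since morphisms in $\ExtCat(\Set^\op)$ are natural transformations, which dualize to morphisms of \oo-precategories, this yields the desired isomorphism $\ExtCat(\Set^\op) \cong \wcat$.

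Finally, I would splice the three identifications together into
\[
\Mod{\Th} \cong \Homglz(\Th, \Set^\op) \cong \ExtCat(\Set^\op) \cong \wcat
\]
and observe that the composite sends a model $X$ to the strict \oo-category with compositions $X(\Thn[j]{i})$ and units $X(\Thk{i})$, i.e.\ that it coincides with the canonical functor defined just before the statement; being a composite of equivalences, that functor is an equivalence. I expect the main obstacle to be purely bookkeeping: the careful dualization in $\Set^\op$ of the defining diagrams of a categorical globular extension into the defining equations of a strict \oo-category, the systematic reversal of arrows (in particular turning $\Thn[j]{i} : \Dn{i} \to \Dn{i} \amalgd{j} \Dn{i}$ into the composition map $X_i \times_{X_j} X_i \to X_i$) being the only place where an error could slip in.
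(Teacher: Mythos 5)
Your proposal is correct and follows exactly the paper's own (one-line) argument: apply the universal property of $\Th$ (Proposition~\ref{prop:prop_univ_th}) to $\Set^\op$ and use Proposition~\ref{prop:mod_thz} to identify the underlying data, so that $\Mod{\Th} \cong \Homglz(\Th,\Set^\op) \cong \ExtCat(\Set^\op) \cong \wcat$. You have merely spelled out the dualization bookkeeping that the paper leaves implicit, and your identification of the composite with the canonical functor is the right final check.
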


\begin{proof}
This is an immediate consequence of the universal property of $\Th$ applied to
$\Set^\op$ and of Proposition \ref{prop:mod_thz}.
\end{proof}

\begin{prop}\label{prop:desc_th}
The functor
\[ \Th  \to \Mod{\Th} \to \wcat \]
identifies $\Th$ with the full subcategory of $\wcat$ whose objects are free
strict \oo-categories on globular pasting schemes.
\end{prop}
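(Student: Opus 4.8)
The plan is to deduce full faithfulness formally from the results already established, and then to identify the objects in the image with free strict \oo-categories on globular pasting schemes by an adjunction-and-Yoneda argument. Write $U : \wcat \to \pref{\G}$ for the forgetful functor sending a strict \oo-category to its underlying globular set, and $L : \pref{\G} \to \wcat$ for its left adjoint, the free strict \oo-category functor (already invoked in the discussion of cardinals above). Denote by $F$ the composite $\Th \to \Mod{\Th} \to \wcat$.

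First I would check that $F$ is fully faithful, which is immediate. The Yoneda functor $\Th \to \pref{\Th}$ is fully faithful and, as recalled above, factors through the full subcategory $\Mod{\Th}$; hence $\Th \to \Mod{\Th}$ is fully faithful. Composing with the equivalence $\Mod{\Th} \to \wcat$ of Proposition \ref{prop:mod_th}, the functor $F$ is fully faithful.

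Second, and this is the main content, I would identify the objects in the image. For an object $S = \Dn{i_1} \amalgd{i'_1} \dots \amalgd{i'_{n-1}} \Dn{i_n}$ of $\Th$, let $S^{\ast}$ in $\pref{\G}$ be the associated globular pasting scheme, that is the globular sum of the globes $\Dn{i_k}$ computed in $\pref{\G}$. I claim $F(S) \cong L(S^{\ast})$. To see this, fix $Y$ in $\wcat$ and let $M_Y$ be the model of $\Th$ corresponding to $Y$ under $\Mod{\Th} \simeq \wcat$, so that $M_Y$ restricts on $\G$ to $U(Y)$ and $M_Y(\Dn{i}) = Y_i$. Yoneda together with the equivalence gives $\Hom_{\wcat}(F(S), Y) \cong M_Y(S)$. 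Since $M_Y$ sends the globular sum $S$ to the corresponding globular product, and since $S^{\ast}$ is the colimit of the globes $\Dn{i_k}$ in $\pref{\G}$, both $M_Y(S)$ and $\Hom_{\pref{\G}}(S^{\ast}, U(Y))$ compute the same iterated fibre product $Y_{i_1} \times_{Y_{i'_1}} \dots \times_{Y_{i'_{n-1}}} Y_{i_n}$; and the adjunction $L \dashv U$ gives $\Hom_{\pref{\G}}(S^{\ast}, U(Y)) \cong \Hom_{\wcat}(L(S^{\ast}), Y)$. These isomorphisms are natural in $Y$, so by the Yoneda lemma $F(S) \cong L(S^{\ast})$, which is by definition the free strict \oo-category on a globular pasting scheme.

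Finally I would conclude. The objects of $\Th$ are in bijection with tables of dimensions, hence with globular pasting schemes, so every globular pasting scheme is of the form $S^{\ast}$; thus the essential image of $F$ is exactly the class of free strict \oo-categories on globular pasting schemes. Together with full faithfulness, this identifies $\Th$ with the full subcategory of $\wcat$ on these objects. The main obstacle is the object identification $F(S) \cong L(S^{\ast})$: full faithfulness is formal, but one must verify naturality in $Y$ and correctly match the iterated fibre product coming from the globular-product condition on models with the one coming from expressing $S^{\ast}$ as a colimit of globes. Setting up the adjunction $L \dashv U$ and the compatibility of the equivalence $\Mod{\Th} \simeq \wcat$ with underlying globular sets is the only genuine input beyond the stated propositions.
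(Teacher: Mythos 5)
Your proof is correct, and it reaches the same key fact as the paper --- namely that the composite $\Th \to \wcat$ agrees, on objects, with $L$ applied to the corresponding globular pasting scheme --- but by a different route. The paper establishes this as the commutativity up to isomorphism of the square relating $k : \Thz \to \Th$ to $L : \pref{\G} \to \wcat$, and it does so through a general piece of machinery: for any morphism $u : C \to D$ of globular extensions, the restriction $u^{\bullet} : \Mod{D} \to \Mod{C}$ admits a left adjoint $u_{\bullet} = r_D u_! j_C$ (using the existence, via Ad\'amek--Rosick\'y, of a left adjoint $r_C$ to the inclusion $\Mod{C} \to \pref{C}$), and the square comparing $u$ with $u_{\bullet}$ commutes up to isomorphism; the claim then follows by pasting this square with the one identifying $k^{\bullet}$ with $U$. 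You instead compute the corepresented functor $\Hom_{\wcat}(F(S), -)$ directly: Yoneda gives $M_Y(S)$, the globular-product condition on models and the adjunction $L \dashv U$ identify this with $\Hom_{\wcat}(L(S^{\ast}), Y)$ naturally in $Y$, and Yoneda again gives $F(S) \cong L(S^{\ast})$. Your argument is more elementary --- it avoids invoking the existence of the reflections $r_C$ and the functors $u_{\bullet}$ altogether --- at the cost of being special to the situation at hand, whereas the paper's formulation isolates a reusable compatibility statement (it is reused verbatim for $\Thtld$). The two points you flag as needing care, naturality in $Y$ and the matching of the two iterated fibre products (whose structure maps are $\Gls[j]{i}$ and $\Glt[j]{i}$ on both sides), do go through, and the final step identifying globular pasting schemes with the $S^{\ast}$ for $S$ ranging over the objects of $\Th$ is justified since $\Thz \to \Th$ is bijective on objects.
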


\begin{proof}
By the previous proposition, this functor is fully faithful. It thus
suffices to describe its image.

If $C$ is a globular extension, we will denote by
\[ C \xrightarrow{i^{}_C} \Mod{C} \xrightarrow{j^{}_C} \pref{C} \]
the canonical decomposition of the Yoneda functor. 
By Propositions 1.27 and 1.51 of~\cite{Adamek}, the functor $i^{}_C$ admits a
left adjoint $r^{}_C$.

Let now $u : C \to D$ be a morphism of globular extensions.
Denote by $u^\ast : \pref{D} \to \pref{C}$ the restriction functor 
and by $u_! : \pref{C} \to \pref{D}$ its left adjoint. The functor $u^\ast$
induces a functor $u^\bullet : \Mod{D} \to \Mod{C}$. Moreover, this functor
admits $u_\bullet = r_Du_!j^{}_C$ as a left adjoint and the square
\[
\xymatrix{
C \ar[r]^u \ar[d]_{i^{}_C} & D \ar[d]^{i^{}_D} \\
\Mod{C} \ar[r]_{u_\bullet} & \Mod{D}
}
\]
is commutative up to isomorphism. In particular, 
if $k : \Thz \to \Th$ denotes the canonical morphism,
the square
\[
\xymatrix{
\Thz \ar[d]_{i^{}_{\Thz}} \ar[r]^k & \Th \ar[d]^{i^{}_{\Th}} \\
\Mod{\Thz} \ar[r]_{k_\bullet} & \Mod{\Th} \\
}
\]
is commutative up to isomorphism.

Let $U$ be the forgetful functor $\wcat \to \pref{\G}$ and let $L : \pref{\G} \to
\wcat$ be its left adjoint, i.e., the free strict \oo-category functor.
The square
\[
\xymatrix{
\Mod{\Thz} \ar[d] & \Mod{\Th} \ar[l]_(.45){k^\bullet} \ar[d] \\
\pref{\G} & \wcat \ar[l]^U \pbox{,}
}
\]
where the vertical functors are the equivalences of categories
of Propositions \ref{prop:mod_thz} and \ref{prop:mod_th}, is obviously
commutative. It follows that the square
\[
\xymatrix{
\Mod{\Thz} \ar[r]^{k_\bullet} \ar[d] & \Mod{\Th} \ar[d] \\
\pref{\G} \ar[r]_-L & \wcat
}
\]
is commutative up to isomorphism.

We thus obtain that the diagram
\[
\xymatrix{
\Thz \ar[d]_{i^{}_{\Thz}} \ar[r]^k & \Th \ar[d]^{i^{}_{\Th}} \\
\Mod{\Thz} \ar[r]^{k_\bullet} \ar[d] & \Mod{\Th} \ar[d] \\
\pref{\G} \ar[r]^-L & \wcat
}
\]
is commutative up to isomorphism, hence the result.
\end{proof}

\begin{prop}
The category $\Th$ is canonically isomorphic to Joyal's cell category.
\end{prop}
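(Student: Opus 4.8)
The plan is to read off the statement by combining Proposition~\ref{prop:desc_th} with the theorem of Berger, Makkai and Zawadowski. That theorem (see \cite{BergerNerve} and \cite{MakZaw}), which confirms the conjecture of Batanin and Street of \cite{BataninStreetTrees}, asserts that Joyal's cell category is canonically isomorphic to the full subcategory of $\wcat$ spanned by the free strict \oo-categories on globular pasting schemes, the comparison being compatible with the evident functors from $\G$ on both sides. By Proposition~\ref{prop:desc_th}, the functor $\Th \to \Mod{\Th} \to \wcat$ identifies our category $\Th$ with precisely this full subcategory of $\wcat$. Composing the two identifications then yields the desired canonical isomorphism, and this is essentially the whole proof: no fresh homotopical or categorical computation is needed.

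The one point requiring attention is that the two descriptions of the relevant full subcategory of $\wcat$ refer to the same class of objects. On our side, the objects are the free strict \oo-categories on the globular pasting schemes, i.e.\ on the globular sets lying in the image of the fully faithful functor $\Thz \to \pref{\G}$. As recalled above, these globular sets are exactly the Batanin globular sets $T^\ast$ attached to finite planar rooted trees $T$, equivalently the cardinals of the free strict \oo-category functor in the sense of Weber. Berger's and Makkai--Zawadowski's descriptions of Joyal's cell category are phrased in terms of the same combinatorial data (finite rooted trees, respectively disks and pasting diagrams), so the two classes of generating objects coincide. Since both subcategories are full subcategories of $\wcat$ on the same objects, they are equal, and the induced isomorphism is automatically compatible with morphisms.

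The main obstacle is therefore not a computation but the bookkeeping needed to see that the combinatorial parametrisations of objects used in \cite{BergerNerve} and \cite{MakZaw} match our globular pasting schemes, and that under this matching the distinguished objects $\Dn{i}$ --- whose images in $\wcat$ are the free strict \oo-categories on the representable globular sets, i.e.\ the $i$-disks --- correspond on the two sides. Verifying this last compatibility is what upgrades the abstract isomorphism to a canonical one, namely an isomorphism under $\G$, and it is the step I would write out with care, the rest being a formal composition of already established equivalences.
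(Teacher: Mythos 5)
Your proposal is correct and follows exactly the paper's own argument: the paper likewise deduces the statement by composing Proposition~\ref{prop:desc_th} with the theorem of Makkai--Zawadowski (Theorem 5.10 of \cite{MakZaw}) and Berger (Theorem 1.12 of \cite{BergerNerve}) identifying Joyal's cell category with the full subcategory of $\wcat$ of free strict \oo-categories on globular pasting schemes. Your additional remarks on matching the combinatorial parametrisations and checking compatibility over $\G$ are a sensible elaboration of the same proof, not a different route.
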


\begin{proof}
By Theorem 5.10 of \cite{MakZaw} (or Theorem 1.12 of \cite{BergerNerve}),
Joyal's cell category is canonically isomorphic to the full subcategory of
$\wcat$ described in the previous proposition. Hence the result by this
proposition.
\end{proof}

\begin{paragr}\label{paragr:pregr}
A \ndef{pregroupoidal globular extension} is a precategorical globular
extension endowed with morphisms
\[
\Thw[j]{i} : \Dn{i} \to \Dn{i}, \qquad i > j \ge 0,
\]
such that for all $i, j$ satisfying $i > j \ge 0$, we have
\[
\Thw[j]{i}\Ths{i} =
\begin{cases}
\Tht{i} & j = i - 1,\\
\Ths{i}\Thw[j]{i-1} & j < i - 1,
\end{cases}
\]
and
\[
\Thw[j]{i}\Tht{i} =
\begin{cases}
\Ths{i} & j = i - 1,\\
\Tht{i}\Thw[j]{i-1} & j < i - 1.
\end{cases}
\]


A \ndef{morphism of pregroupoidal globular extensions} is a morphism of
precategorical globular extensions between pregroupoidal globular extensions
preserving the $\Thw[j]{i}$'s.

A pregroupoidal globular extension is \ndef{groupoidal} if it is categorical
and if it satisfies the following additional axioms:
\begin{itemize}
\item $\LInv{i}{j}, \quad i > j \ge 0$, \\
the following square commutes:
\[
\xymatrixrowsep{3pc}
\xymatrixcolsep{3.5pc}
\xymatrix{
  \Dn{i} \ar[d]_{\Thn[j]{i}} \ar[r]^{\Thk[i]{j}} 
  &  \Dn{j} \ar[d]^{\Ths[j]{i}}
   \\
  \Dn{i} \amalgd{j} \Dn{i} \ar[r]_-{(\Thw[j]{i}, \id{\Dn{i}})} & \Dn{i}
  \pbox{;}
}
\]
\item $\RInv{i}{j}, \quad i > j \ge 0$, \\
the following square commutes:
\[
\xymatrixrowsep{3pc}
\xymatrixcolsep{3.5pc}
\xymatrix{
  \Dn{i} \ar[d]_{\Thn[j]{i}} \ar[r]^{\Thk[i]{j}} 
  &  \Dn{j} \ar[d]^{\Tht[j]{i}}
   \\
  \Dn{i} \amalgd{j} \Dn{i} \ar[r]_-{(\id{\Dn{i}},\Thw[j]{i})} & \Dn{i}
  \pbox{.}
}
\]
\end{itemize}

The \ndef{category of groupoidal globular extensions} is the full
subcategory of the category of pregroupoidal globular extensions whose
objects are groupoidal globular extensions.

If $C$ is a category, we will denote by $\ExtGr(C)$ the category whose
objects are groupoidal globular extension structures on $C$, i.e., functors
from $\Thz$ to $C$ endowed with $\Thn[j]{i}$'s, $\Thk{i}$'s and $\Thw[j]{i}$'s
making $C$ a groupoidal globular extension, and whose morphisms are natural
transformations.
\end{paragr}

\begin{prop}[Universal property of $\Thtld$]\label{prop:prop_univ_thtld}
There exists a groupoidal globular extension $\Thtld$ such that for every
category $C$, the precomposition by the functor $\Thz \to \Thtld$ induces an
isomorphism of categories
\[
\Homglz(\Thtld, C) \to \ExtGr(C).
\]
\end{prop}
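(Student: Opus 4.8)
The plan is to follow verbatim the strategy used for the universal property of $\Th$ in Proposition~\ref{prop:prop_univ_th}, enlarging the list of formal generators and relations to account for the extra structure $\Thw[j]{i}$ and the inversion axioms. The construction is a two-stage free completion, in which the globular completion (the construction of $\overline{C}$ above) is applied after each stage so that the result is genuinely a globular extension under $\Thz$ rather than a mere category under $\Thz$.

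First I would let $\Theta_{\mathrm{pgr}}$ be the globular completion of the category obtained from $\Thz$ by formally adjoining morphisms $\Thk{i}$, $\Thn[j]{i}$ and $\Thw[j]{i}$ subject to the relations imposed in the definitions of precategorical (Paragraph~\ref{paragr:precat}) and pregroupoidal (the compatibility of the $\Thw[j]{i}$'s with $\Ths{i}$ and $\Tht{i}$ in Paragraph~\ref{paragr:pregr}) globular extensions. This produces a pregroupoidal globular extension with the expected universal property computing $\Homglz(\Theta_{\mathrm{pgr}}, C)$.

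Next I would let $\Thtld$ be the globular completion of the category obtained from $\Theta_{\mathrm{pgr}}$ by formally imposing the commutativity of all the diagrams appearing in the definition of groupoidal globular extensions, namely the categorical axioms $\Assx$, $\Excx$, $\Lunx$, $\Runx$, $\Funx$ together with the inversion axioms $\LInvx$ and $\RInvx$. Combining the universal properties of the formal adjunction of generators and relations and of the quotient with the defining isomorphism $\Homglz(\overline{C}, D) \to \Homglz(C, D)$ of the globular completion, a functor $\Thtld \to C$ under $\Thz$ corresponds precisely to a functor $\Thz \to C$ equipped with images of $\Thk{i}$, $\Thn[j]{i}$ and $\Thw[j]{i}$ satisfying exactly the relations and axioms of a groupoidal globular extension structure, that is, to an object of $\ExtGr(C)$. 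The analogous description on morphisms then shows that the functor $\Homglz(\Thtld, C) \to \ExtGr(C)$ is an isomorphism of categories.

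I expect no serious obstacle; the only point demanding attention is the interleaving of the two formal steps with the globular completion, exactly as in the proof for $\Th$. The globular completion must be inserted after each stage because the generators, relations and imposed diagrams all live over $\Thz$, whereas the globular sums have to be re-added freely afterwards; this bookkeeping is precisely what guarantees that $\Thtld$ is a globular extension. With the extra generators $\Thw[j]{i}$ and the two additional families $\LInvx$ and $\RInvx$ included, the argument of Proposition~\ref{prop:prop_univ_th} carries over unchanged.
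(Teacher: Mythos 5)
Your proposal is correct and is essentially the paper's own argument: the paper simply says the proof is similar to that of Proposition~\ref{prop:prop_univ_th}, which proceeds by exactly the two-stage construction you describe (formally adjoining the generators with the pregroupoidal relations, then imposing the axiom diagrams, taking the globular completion after each stage). Your version just spells out the extra generators $\Thw[j]{i}$ and the axioms $\LInvx$, $\RInvx$ explicitly.
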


\begin{proof}
The proof is similar to the one of the categorical case
(Proposition \ref{prop:prop_univ_th}).
\end{proof}

\begin{paragr}
We will denote by $\Thtld$ the groupoidal globular extension of the previous
proposition (which is unique up to a unique isomorphism). The category
$\Thtld$ is the groupoidal analogue to Joyal's category $\Th$. Note that the
functor $\Thz \to \Thtld$ is bijective on objects. 
\end{paragr}

\begin{paragr}
Let $C$ be a groupoidal globular extension. As in the categorical case, if
$X$ is a globular presheaf on $C$, the globular set obtained by restricting $X$
to $\G$ is canonically endowed with a structure of strict \oo-category, and
this \oo-category is a strict \oo-groupoid whose inverses are given by the
\[ \Glw[j]{i} = X(\Thw[j]{i}) : X_i \to X_i, \quad i > j \ge 0. \]
We thus have a canonical functor
\[ \Mod{C} \to \wgrp. \]
\end{paragr}

The two following propositions are proved exactly as in the categorical
case.

\begin{prop}\label{prop:mod_thtld}
The functor
\[ \Mod{\Thtld} \to \wgrp \]
is an equivalence of categories.
\end{prop}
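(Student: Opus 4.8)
The plan is to proceed exactly as in the categorical case treated in Proposition~\ref{prop:mod_th}, using the universal property of $\Thtld$ (Proposition~\ref{prop:prop_univ_thtld}) applied to the category $\Set^\op$, together with Proposition~\ref{prop:mod_thz}. Recall first the functor under consideration, described in the paragraph preceding the statement: given a globular presheaf $X$ on $\Thtld$, its restriction along $\G \to \Thz \to \Thtld$ is a globular set, and the maps $X(\Thn[j]{i})$, $X(\Thk{i})$ and $X(\Thw[j]{i})$ equip it with the operations $\comp^i_j$, $\Glk{i}$ and $\Glw[j]{i}$ of a strict \oo-groupoid.

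The strategy is to realise $\Mod{\Thtld} \to \wgrp$ as a composite of the (iso)morphisms and equivalences
\[
\Mod{\Thtld} \;\cong\; \Homglz(\Thtld, \Set^\op) \;\cong\; \ExtGr(\Set^\op) \;\simeq\; \wgrp
\]
and then to check that this composite is naturally isomorphic to the canonical functor. The first identification is essentially the definition of a model: a globular presheaf on $\Thtld$ is the same as a functor $\Thtld \to \Set^\op$ which is under $\Thz$ and respects globular sums (globular products in $\Set$ being genuine limits). The middle isomorphism is precisely the content of the universal property of $\Thtld$ specialised to $\Set^\op$.

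For the last equivalence I would argue as follows. Forgetting the operations defines a functor $\ExtGr(\Set^\op) \to \ExtGl(\Set^\op)$; by the universal property of $\Thz$ (Proposition~\ref{prop:prop_univ_thz}) the target $\ExtGl(\Set^\op)$ is equivalent to $\Homgl(\Thz, \Set^\op) \cong \Mod{\Thz}$, which by Proposition~\ref{prop:mod_thz} is the category $\pref{\G}$ of globular sets. Thus a groupoidal globular extension structure on $\Set^\op$ amounts to a globular set together with the extra data $\Thn[j]{i}$, $\Thk{i}$ and $\Thw[j]{i}$; after applying $X$ and using that the globular sum $\Dn{i} \amalgd{j} \Dn{i}$ is carried to the globular product $X_i \times_{X_j} X_i$, this extra data becomes exactly the composition, unit and inverse maps of an \oo-pregroupoid, and the defining diagrams translate into the corresponding element-wise identities.

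The one point that genuinely requires checking---though it is routine and, as in the categorical case, essentially forced by the way the axioms were set up---is this last translation: that dualising each commutative square or triangle in the definition of a groupoidal globular extension ($\Assx$, $\Excx$, $\Lunx$, $\Runx$, $\Funx$, $\LInvx$ and $\RInvx$) reproduces, after applying the presheaf $X$, exactly the axioms defining a strict \oo-groupoid. The only care needed is to keep track of the identification of iterated globular sums with iterated fibre products. Since every step above is natural in $X$, the resulting equivalence is naturally isomorphic to the canonical functor $\Mod{\Thtld} \to \wgrp$, which is what we want.
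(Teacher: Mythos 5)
Your proposal is correct and follows exactly the route the paper takes: the paper proves this proposition ``exactly as in the categorical case,'' i.e., as an immediate consequence of the universal property of $\Thtld$ applied to $\Set^\op$ together with Proposition~\ref{prop:mod_thz}. Your write-up simply spells out the chain of identifications that the paper leaves implicit.
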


\begin{prop}\label{prop:desc_thtld}
The functor
\[ \Thtld  \to \Mod{\Thtld} \to \wgrp \]
identifies $\Thtld$ with the full subcategory of $\wgrp$ whose objects are free
strict \oo-group\-oids on globular pasting schemes.
\end{prop}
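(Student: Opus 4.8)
The plan is to transcribe, essentially verbatim, the argument of Proposition~\ref{prop:desc_th}, replacing strict \oo-categories by strict \oo-groupoids throughout; the only point at which the groupoidal case genuinely departs from the categorical template is the existence and behaviour of the free strict \oo-groupoid functor. First I would invoke Proposition~\ref{prop:mod_thtld}: since $\Mod{\Thtld} \to \wgrp$ is an equivalence and the canonical functor $\Thtld \to \Mod{\Thtld}$ is fully faithful, the composite $\Thtld \to \Mod{\Thtld} \to \wgrp$ is fully faithful, so it remains only to identify its essential image.

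Next I would reuse the adjunction bookkeeping of the categorical proof without change. For any globular extension $C$, write $C \xrightarrow{i^{}_C} \Mod{C} \xrightarrow{j^{}_C} \pref{C}$ for the canonical factorization of the Yoneda functor; by Propositions 1.27 and 1.51 of~\cite{Adamek}, $i^{}_C$ admits a left adjoint $r^{}_C$. For a morphism $u : C \to D$ of globular extensions, the restriction $u^\ast$ induces a functor $u^\bullet : \Mod{D} \to \Mod{C}$ with left adjoint $u_\bullet = r^{}_D u_! j^{}_C$, and the square relating $u$ and $u_\bullet$ commutes up to isomorphism. Applying this to the canonical morphism $k : \Thz \to \Thtld$ produces the square relating $k$ and $k_\bullet : \Mod{\Thz} \to \Mod{\Thtld}$, again commutative up to isomorphism.

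Now comes the one \oo-groupoidal ingredient. Let $U : \wgrp \to \pref{\G}$ be the forgetful functor and $L : \pref{\G} \to \wgrp$ its left adjoint, the \emph{free strict \oo-groupoid functor}; its existence follows since $\wgrp$ is locally presentable and $U$ preserves the relevant limits, or equivalently by composing the free strict \oo-category functor with the reflection of $\wcat$ onto its full subcategory $\wgrp$. The square expressing the compatibility of the equivalences of Propositions~\ref{prop:mod_thz} and~\ref{prop:mod_thtld} with $k^\bullet$ and $U$ is commutative on the nose, so passing to left adjoints shows that the square with $k_\bullet$ and $L$ commutes up to isomorphism. Stacking the two squares exhibits the rectangle
\[
\xymatrix{
\Thz \ar[d]_{i^{}_{\Thz}} \ar[r]^k & \Thtld \ar[d]^{i^{}_{\Thtld}} \\
\Mod{\Thz} \ar[r]^{k_\bullet} \ar[d] & \Mod{\Thtld} \ar[d] \\
\pref{\G} \ar[r]^-L & \wgrp
}
\]
as commutative up to isomorphism. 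Since the left vertical composite identifies $\Thz$ with the globular pasting schemes inside $\pref{\G}$, the rectangle shows that $\Thtld \to \wgrp$ sends an object of $\Thtld$ to the free strict \oo-groupoid on the corresponding globular pasting scheme; together with full faithfulness this yields the desired identification.

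The step I expect to require the most care is precisely the single place where the argument leaves the categorical setting of \cite{MaltsiTheta}, namely confirming that $U : \wgrp \to \pref{\G}$ really admits a left adjoint $L$ and that this $L$ is the functor occurring in the bottom row; the remainder is a formal transfer of the proof of Proposition~\ref{prop:desc_th}.
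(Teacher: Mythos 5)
Your proposal is correct and matches the paper, which simply states that this proposition is ``proved exactly as in the categorical case,'' i.e.\ by transcribing the proof of Proposition~\ref{prop:desc_th} with $\wcat$ replaced by $\wgrp$ and the free strict \oo-category functor replaced by the free strict \oo-groupoid functor. Your identification of the existence of the left adjoint $L : \pref{\G} \to \wgrp$ as the only non-formal point of the transfer is exactly right.
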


\section{Test categories and décalages}

\begin{paragr}
We recall that if $A$ is a small category, we denote by $\pref{A}$ the
category of presheaves on $A$. Let $u : A \to B$ be a functor and
$b$ an object of $B$. We will denote by $A/b$ the comma category
whose objects are pairs $(a, f : u(a) \to b)$ where $a$ is an object of $A$
and $f$ a morphism of $B$, and whose morphisms from an object $(a, f)$ to an
object $(a', f')$ are morphisms $g : a \to a'$ of $A$ such that $f'u(g) = f$.
In particular, if $A$ is a small category and $F$ is a presheaf on $A$, the
category $A/F$ (where $u : A \to \pref{A}$ is the Yoneda functor) is the
category of elements of $F$.
\end{paragr}

\begin{paragr}
We will denote by $\Cat$ the category of small categories. We recall that a
\ndef{weak equivalence} of small categories is a functor which is sent by
the nerve functor on a weak equivalence of simplicial sets. We will denote by
$\W$ the class of weak equivalences of small categories and by $\Hot$ the
Gabriel-Zisman localization $\Cat[\W^{-1}]$ of $\Cat$ by $\W$. A famous
theorem of Quillen states that $\Hot$ is canonically equivalent to the
homotopy category of simplicial sets (see Corollary 3.3.1 of \cite{Illusie})
and hence to the homotopy category of CW-complexes.
\end{paragr}

\begin{paragr}
Let $A$ be a small category. We have a pair of adjoint functors
\[
\begin{matrix}
 i^{}_A : & \pref{A}  & \to & \Cat & \qquad &
i^*_A : & \Cat &  \to & \pref{A}\hfill\\
 & F &  \mapsto &  A/F & \qquad &
 & C & \mapsto & \big(a \mapsto \Hom_\Cat(A/a, C)\big).
\end{matrix}
\]
A morphism of presheaves on $A$ is a \ndef{weak equivalence} if it is sent
by $i^{}_A$ on a weak equivalence of small categories. We will denote by
$\W_{\pref{A}}$ the class of weak equivalences of presheaves on $A$ and by
$\Ho{A}$ the Gabriel-Zisman localization of $\pref{A}$ by $\W_{\pref{A}}$.
The functor $i^{}_A$ induces a functor $\overline{i^{}_A} : \Ho{A} \to
\Hot$.  If $i^*_A(\W) \subset \W_{\pref{A}}$, i.e., if $i^{}_Ai^*_A(\W) \subset
\W$, then the functor $i^*_A$ induces a functor $\overline{i^*_A} : \Hot \to
\Ho{A}$. Moreover, if this condition is satisfied, the pair of adjoint
functors $(i^{}_A, i^*_A)$ induces a pair of adjoint functors
$(\overline{i^{}_A}, \overline{i^*_A})$.
\end{paragr}

\begin{paragr}
A small category $A$ is a \ndef{weak test category} if the
following conditions are satisfied:
\begin{itemize}
\item we have $i^*_A(\W) \subset \W_{\pref{A}}$ ;
\item for every presheaf $F$ on $A$, the unit morphism $\eta^{}_F : F \to
i^*_Ai^{}_A(F)$ belongs to $\W_{\pref{A}}$ ;
\item for every small category $C$, the counit morphism $\epsilon^{}_C :
i^{}_Ai^*_A(C) \to C$ belongs to $\W$.
\end{itemize}
The two last conditions are the obvious sufficient conditions for the
adjunction $(\overline{i^{}_A}, \overline{i^*_A})$ to be an equivalence
adjunction. In particular, if $A$ is a weak test category, the category
$\Ho{A}$ is canonically equivalent to $\Hot$.
\end{paragr}

\begin{paragr}
A small category $A$ is a \ndef{local test category} if for
every object $a$ of $A$, the category $A/a$ is a weak test category. A
small category is a \ndef{test category} if it is a weak test category and a
local test category.

A test category $A$ is a \ndef{strict test category} if the functor
$i^{}_A$ respects binary products up to weak equivalence, i.e., if
for all presheaves $F$ and $G$ on $A$, the canonical functor
\[A/(F \times G) \to A/F \times A/G\]
is a weak equivalence.
\end{paragr}

\begin{thm}[Grothendieck-Cisinski]\label{thm:test_cm}
Let $A$ be a local test category. Then $(\pref{A}, \W_{\pref{A}})$ is endowed with a
structure of model category whose cofibrations are the monomorphisms. This
model category structure is cofibrantly generated and proper.

Moreover, if $A$ is a strict test category, weak equivalences are stable
by binary products.
\end{thm}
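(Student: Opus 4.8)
The plan is to read the first two assertions as a direct instance of Cisinski's foundational existence theorem for model structures on presheaf categories, and to establish the product statement by a short diagram chase. For the model structure I would first recall that in any presheaf topos $\pref{A}$ the monomorphisms form a weakly saturated class generated by a \emph{small} set of maps, namely the inclusions of subobjects of representables; this makes the choice ``cofibrations $=$ monomorphisms'' admissible in the sense of Cisinski's machinery (see \cite{Cisinski}). I would then equip $\pref{A}$ with an exact cylinder (the canonical one coming from the Lawvere interval) and observe that the class $\W_{\pref{A}}$, defined through the functor $i_A$ and the minimal basic localizer $\W$, is an accessible localizer on $\pref{A}$. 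The hypothesis that $A$ is a local test category is precisely what guarantees that $\W_{\pref{A}}$ is compatible with the cylinder and stable under the homotopy constructions demanded by Grothendieck's axioms, so that $\W_{\pref{A}}$ qualifies as a Cisinski localizer.

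With these verifications in place, Cisinski's existence theorem (\cite{Cisinski}; see also \cite{Maltsi}) produces a cofibrantly generated model structure on $\pref{A}$ whose cofibrations are the monomorphisms and whose weak equivalences are $\W_{\pref{A}}$, the generating cofibrations being the subobject inclusions above and the generating trivial cofibrations the associated anodyne extensions. For properness, left properness is automatic, since every object is cofibrant (each $\emptyset \to F$ is a monomorphism) and a model category in which all objects are cofibrant is left proper; right properness is not formal, and I would take it from Cisinski's analysis of these structures, where it follows from the good behaviour of the cylinder together with the stability of trivial fibrations under base change.

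For the final assertion I argue directly. Let $f : F \to F'$ and $g : G \to G'$ belong to $\W_{\pref{A}}$; I must show that $f \times g : F \times G \to F' \times G'$ lies in $\W_{\pref{A}}$, that is, that $i_A(f \times g)$ is a weak equivalence of small categories. Consider the commutative square
\[
\xymatrix@C=3pc{
A/(F \times G) \ar[r] \ar[d]_{i_A(f \times g)} & (A/F) \times (A/G) \ar[d]^{i_A(f) \times i_A(g)} \\
A/(F' \times G') \ar[r] & (A/F') \times (A/G')
}
\]
in which the horizontal arrows are the canonical comparison functors. By the definition of a \emph{strict} test category both horizontal functors are weak equivalences. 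The right-hand vertical functor is the product of the weak equivalences $i_A(f)$ and $i_A(g)$; since the nerve preserves binary products and a product of weak equivalences of simplicial sets is again a weak equivalence, it too is a weak equivalence. Three of the four sides of the square are therefore weak equivalences, so the two-out-of-three property of $\W$ forces the fourth, $i_A(f \times g)$, to be a weak equivalence, whence $f \times g \in \W_{\pref{A}}$.

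The main obstacle is concentrated entirely in the first part: the genuine content is Cisinski's construction of the model structure and his proof of right properness, which rest on the whole apparatus of anodyne extensions, the exact cylinder, and the comparison of naive with genuine weak equivalences. I would treat these as black boxes, my only real task being to check that the local test category hypothesis feeds correctly into the definition of an accessible localizer. The product statement, by contrast, is elementary once the strict test category condition is unwound, the sole non-formal input being the stability of weak equivalences of simplicial sets under binary products.
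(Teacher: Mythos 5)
Your proposal is correct and follows essentially the same route as the paper, which likewise defers the existence of the model structure and its properness to Cisinski's general theory (Corollary 4.2.18 and Theorem 4.3.24 of \emph{op.\ cit.}) and treats the product statement as elementary. Your two-out-of-three argument in the commutative square comparing $A/(F\times G)$ with $(A/F)\times(A/G)$ is a correct unwinding of what the paper dismisses as ``obvious.''
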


\begin{proof}
See Corollary 4.2.18 of \cite{Cisinski} for the model category structure.
The properness follows by Theorem 4.3.24 of \cite{Cisinski} and by the case of
simplicial sets.

The last assertion is obvious.
\end{proof}

\begin{paragr}
A small category $A$ is \ndef{aspherical} if the unique functor from $A$ to
the terminal category is a weak equivalence. It is easy to check that
categories admitting a terminal object are aspherical.  One can prove (see
Remark 1.5.4 of \cite{Maltsi}) that a local test category is test if and
only if it is aspherical. We will only need the following obvious case: a
local test category with a terminal object is a test category.

Let $u : A \to B$ be a functor between small categories. The functor $u$ is
\ndef{aspherical} if for every object $b$ of $B$, the category $A/b$ is
aspherical. 

Let $A$ be a small category. A presheaf $F$ on a $A$ is \ndef{aspherical} if
the category $A/F$ is aspherical. Every representable presheaf is aspherical
since for every object $a$ of $A$, the category $A/a$ admits a terminal
object.

If $u : A \to B$ is a functor between small categories, we will denote by
$u^\ast : \pref{B} \to \pref{A}$ the restriction functor and by $u_\ast :
\pref{A} \to \pref{B}$ its right adjoint.
\end{paragr}

\begin{prop}
Let $u : A \to B$ be a functor between aspherical small categories. The
following properties are equivalent:
\begin{enumerate}
\item the functor $u$ is aspherical;
\item for every morphism $\varphi : F \to G$ of presheaves on $B$, the
morphism $\varphi$ is a weak equivalence of presheaves on $B$ if and only if
the morphism $u^\ast(\varphi)$ is a weak equivalence of presheaves on $A$.
\end{enumerate}
\end{prop}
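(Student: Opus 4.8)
**The plan is to prove the equivalence by unwinding the definitions of the various homotopy categories and weak equivalences, establishing the key link via the comparison functor $A/u^\ast(F) \to B/F$.** The statement asserts, for a functor $u : A \to B$ between aspherical small categories, the equivalence of (1) asphericity of $u$ and (2) the property that $u^\ast$ creates and reflects weak equivalences of presheaves. I would observe first that the excerpt's definitions reduce everything to the behaviour of the functors $i^{}_A, i^{}_B$ (sending a presheaf to its category of elements) and their interaction with $u^\ast$. The crucial combinatorial fact, which I would isolate as a lemma, is that for any presheaf $F$ on $B$ the category of elements $A/u^\ast(F)$ is canonically identified with the comma category of $u$ over the Yoneda diagram of $F$; more precisely, there is a canonical functor
\[
c^{}_F : A/u^\ast(F) \to B/F,
\]
and this functor is a weak equivalence for every $F$ if and only if $u$ is aspherical. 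This is in fact the content of the commented-out intermediate statement in the excerpt, so I would resurrect it as the technical heart of the argument.

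\emph{First I would prove the equivalence of asphericity of $u$ with the assertion that $c^{}_F$ is a weak equivalence for all $F$.} The key tool is a cofinality/localization argument: the category $A/u^\ast(F)$ is naturally the colimit (in $\Cat$, or rather its image in $\Hot$) of the categories $A/u^\ast(b)$ indexed over the category of elements $B/F$, and likewise $B/F$ is the colimit of the terminal categories over $B/F$. By a theorem on the stability of weak equivalences under such colimits of comma categories (Grothendieck's theorem that $i^{}_A$ sends colimits computed fibrewise to homotopy colimits, which follows from the cited results of Cisinski and Maltsiniotis), the functor $c^{}_F$ is a weak equivalence as soon as each fibre comparison $A/u^\ast(b) \to B/b \simeq \ast$ is a weak equivalence. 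But $A/u^\ast(b)$ is precisely $A/b$ in the notation of the excerpt (the comma category of $u$ over $b$), so this fibre condition is exactly the asphericity of each $A/b$, i.e.\ the asphericity of $u$. Conversely, taking $F$ representable recovers the individual conditions.

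\emph{Next I would derive (2) from the weak equivalence property of the $c^{}_F$.} Given $\varphi : F \to G$ in $\pref{B}$, the square relating $c^{}_F$, $c^{}_G$, $i^{}_A u^\ast(\varphi)$ and $i^{}_B(\varphi)$ commutes, and since $c^{}_F, c^{}_G$ are weak equivalences, the two-out-of-three property shows that $i^{}_B(\varphi)$ is a weak equivalence if and only if $i^{}_A(u^\ast(\varphi))$ is; by definition of weak equivalences of presheaves this is precisely statement (2). For the reverse implication, I would specialize (2) to the aspherical presheaves: applying it to the unique morphism $b \to \ast$ in $\pref{B}$, asphericity of $B$ (hypothesis) transports through $u^\ast$ to asphericity of $u^\ast(b)$, whence $A/u^\ast(b) = A/b$ is aspherical for every $b$, which is the asphericity of $u$.

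\emph{The main obstacle I anticipate is establishing the weak-equivalence property of the comparison functor $c^{}_F$ with adequate rigour}, since this rests on a nontrivial homotopy-colimit decomposition of categories of elements rather than on a formal manipulation. One must verify that $A/u^\ast(F)$ genuinely decomposes as the expected lax/Grothendieck construction over $B/F$ and invoke the correct fibration-type result (Theorem A-style cofinality, or the stability results gathered from \cite{MaltsiTheta} and \cite{Maltsi}) guaranteeing that a fibrewise weak equivalence induces a global one. Once that decomposition and its homotopy invariance are in hand, the remaining steps are the routine two-out-of-three arguments sketched above.
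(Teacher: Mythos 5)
Your argument is essentially correct, but note that the paper itself gives no proof of this statement: it simply refers to \emph{Pursuing Stacks} and to Proposition 1.2.9 of Maltsiniotis's book, and your proposal is in substance the standard proof found there. You correctly identify the commented-out intermediate condition (that the canonical functor $c^{}_F : A/u^\ast(F) \to B/F$ is a weak equivalence for every presheaf $F$ on $B$) as the technical heart, and your deductions from it are sound: the commuting square $c^{}_G \circ i^{}_A(u^\ast\varphi) = i^{}_B(\varphi)\circ c^{}_F$ plus two-out-of-three gives (2), and applying (2) to the morphism $b \to e_{\pref{B}}$ together with the asphericity of $A$ and $B$ recovers (1), using that the category of elements $A/u^\ast(b)$ coincides with the comma category $A/b$. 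The only place where your route is heavier than necessary is the justification that $u$ aspherical implies $c^{}_F$ is a weak equivalence: rather than decomposing $A/u^\ast(F)$ as a homotopy colimit over $B/F$, one can observe directly that the comma category of $c^{}_F$ over an object $(b, x)$ of $B/F$ is canonically isomorphic to $A/b$ (the component $y \in F(u(a))$ being determined by the structure map to $(b,x)$), so that $c^{}_F$ is itself an aspherical functor and hence a weak equivalence by the categorical form of Quillen's Theorem A; the converse is, as you say, obtained by taking $F$ representable. Your homotopy-colimit argument can be made rigorous, but it requires exactly the kind of fibrewise-to-global statement you flag as the main obstacle, whereas the comma-category computation is elementary and is the one used in the cited reference.
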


\begin{proof}
See \cite{GrothPS} or Proposition 1.2.9 of \cite{Maltsi}.
\end{proof}

\begin{prop}\label{prop:asp}
Let $u : A \to B$ be an aspherical functor between test categories. Then 
$(u^\ast, u_\ast)$ is a Quillen equivalence (where $\pref{A}$ and $\pref{B}$
are endowed with the Grothendieck-Cisinski model structure of Theorem
\ref{thm:test_cm}).
\end{prop}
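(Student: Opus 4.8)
The plan is to prove that $u^\ast$ is a left Quillen functor whose total left derived functor is an equivalence of homotopy categories; by the standard characterization of Quillen equivalences this suffices. First I would verify that $(u^\ast, u_\ast)$ is a Quillen adjunction. Since the cofibrations of the Grothendieck-Cisinski structure are the monomorphisms and the restriction functor $u^\ast$ is computed pointwise (hence exact), it preserves monomorphisms, and so cofibrations. Moreover, being test categories, both $A$ and $B$ are aspherical, so the previous proposition applies to the aspherical functor $u$: a morphism $\varphi$ of $\pref{B}$ lies in $\W_{\pref{B}}$ if and only if $u^\ast(\varphi)$ lies in $\W_{\pref{A}}$. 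In particular $u^\ast$ preserves weak equivalences, hence also acyclic cofibrations, and is therefore a left Quillen functor.

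Since $u^\ast$ preserves \emph{all} weak equivalences and every object of $\pref{B}$ is cofibrant (the map from the initial presheaf being a monomorphism), its total left derived functor coincides with the functor $\overline{u^\ast} : \Ho{B} \to \Ho{A}$ induced on the localizations. It thus suffices to prove that $\overline{u^\ast}$ is an equivalence of categories. To this end I would exploit the asphericity of $u$ in its comparison form: for every presheaf $F$ on $B$ the canonical functor $A/u^\ast(F) \to B/F$ (sending $(a,x)$ to $(u(a),x)$) is a weak equivalence of small categories. This is the standard equivalent reformulation of asphericity underlying the previous proposition (see \cite{Maltsi}), and it provides a natural transformation $i^{}_A \circ u^\ast \Rightarrow i^{}_B$ whose components all lie in $\W$. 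After localizing $\Cat$ at $\W$, these components become isomorphisms, yielding a natural isomorphism $\overline{i^{}_A} \circ \overline{u^\ast} \cong \overline{i^{}_B}$ between functors $\Ho{B} \to \Hot$.

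Finally I would invoke the test category hypothesis. Since $A$ and $B$ are (weak) test categories, the functors $\overline{i^{}_A} : \Ho{A} \to \Hot$ and $\overline{i^{}_B} : \Ho{B} \to \Hot$ are equivalences of categories. From the natural isomorphism $\overline{i^{}_A} \circ \overline{u^\ast} \cong \overline{i^{}_B}$ it follows that $\overline{u^\ast}$ is isomorphic to $\overline{i^{}_A}^{-1} \circ \overline{i^{}_B}$, hence is itself an equivalence. As $\overline{u^\ast}$ is the total left derived functor of $u^\ast$, the Quillen adjunction $(u^\ast, u_\ast)$ is a Quillen equivalence. The main obstacle is the middle step, namely extracting from the asphericity of $u$ the objectwise weak equivalence $A/u^\ast(F) \to B/F$ and hence the comparison $\overline{i^{}_A} \circ \overline{u^\ast} \cong \overline{i^{}_B}$; once this is secured, the test category structure reduces the conclusion to a routine two-out-of-three argument in $\Hot$.
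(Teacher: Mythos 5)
Your argument is correct, but it takes a genuinely different route from the paper, which simply defers to Proposition 4.2.24 of \cite{Cisinski}. You reconstruct the standard direct proof: $u^\ast$ preserves monomorphisms and (by the preceding proposition, applicable because test categories are aspherical) weak equivalences, hence is a left Quillen functor whose total left derived functor is $\overline{u^\ast}$; the natural isomorphism $\overline{i^{}_A}\circ\overline{u^\ast}\cong\overline{i^{}_B}$ of functors to $\Hot$ then forces $\overline{u^\ast}$ to be an equivalence, since $A$ and $B$ are weak test categories and so $\overline{i^{}_A}$ and $\overline{i^{}_B}$ are equivalences. The one ingredient you use that the paper does not record is the reformulation of asphericity of $u$ as the condition that $A/u^\ast(F)\to B/F$ is a weak equivalence for every presheaf $F$ on $B$; this is indeed equivalent to the two conditions listed in the preceding proposition (Proposition 1.2.9 of \cite{Maltsi}), so the appeal is legitimate, but it is an external input rather than something already stated in the text. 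What your approach buys is a self-contained proof from the test-category formalism set up in Section 4; what the paper's citation buys is brevity.
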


\begin{proof}
See Proposition 4.2.24 of \cite{Cisinski}.
\end{proof}

\begin{paragr}
Let $A$ be a small category. Denote by $\varnothing_{\pref{A}}$ the initial
presheaf on $A$ and by $e_{\pref{A}}$ the terminal one. An \ndef{interval}
$(I, \partial_0, \partial_1)$ on $\pref{A}$ consists of a presheaf $I$ on
$A$ and two morphisms $\partial_0, \partial_1 : e_{\pref{A}} \to F$. Such an
interval is \ndef{separating} if the equalizer of $\partial_0$ and
$\partial_1$ is $\varnothing_{\pref{A}}$.
\end{paragr}

\begin{paragr}
Let $A$ be a small category. A \ndef{décalage} on $A$ consists of
an endofunctor $D : A \to A$, an object $a_0$ of $A$ and two natural
transformations
\[
\xymatrix{
  \id{A} \ar[r]^\alpha & D & a_0 \ar[l]_\beta
}
\]
(where $a_0$ denotes the constant endofunctor whose value is $a_0$).
We will denote by $(A, a_0, D, \alpha, \beta)$ such a décalage.
A \ndef{splitting} of $(A, a_0, D, \alpha, \beta)$
consists of a retraction $r_a : D(a) \to a$ of $\alpha^{}_a$ for
every object $a$ of~$A$.  Note that the $r_a$'s are \emph{not} asked to be
functorial in $a$. A décalage is \ndef{splittable} if it admits a splitting.
\end{paragr}

\begin{prop}\label{prop:crit_test}
Let $A$ be a small category. If $A$ admits a splittable décalage and $\pref{A}$
admits a separating interval $(I, \partial_0, \partial_1)$ such that $I$ is
aspherical, then $A$ is a strict test category.
\end{prop}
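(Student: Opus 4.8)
The plan is to reduce the statement to a single asphericity property, together with the asphericity of $A$ itself, letting the interval and the splittable décalage each feed one of these. The key property I aim to establish is: for every presheaf $X$ on $A$ and every object $a$ of $A$, the projection $X \times a \to X$ is a weak equivalence of presheaves. Let me first record how this, combined with the hypotheses, gives the conclusion. Applied to an aspherical $X$, the key property shows that $X \times a$ is aspherical. Taking $X = I$ (aspherical by hypothesis) we obtain that $I \times a$ is aspherical for every $a$, that is, that the separating interval $I$ is aspherical over each slice $A/a$, since $(A/a)/(I\vert_{A/a})$ is canonically the category of elements $A/(I \times a)$. By the standard criterion for local test categories (\cite{Maltsi}, see also \cite{Cisinski}), a separating interval that is aspherical over every slice exhibits $A$ as a local test category. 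Taking instead $X = a'$ representable (hence aspherical, as every representable presheaf is), we get that $a' \times a$ is aspherical for all representables, i.e. that $A$ is totally aspherical; by the characterization of strict test categories among test categories (\cite{Maltsi}), this upgrades $A$ to a strict test category once it is known to be a test category.

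It remains to produce the asphericity of $A$ and the key property. The asphericity of $A$ needs only the décalage, not its splitting: applying the nerve functor $N$ to the natural transformations $\id{A} \xrightarrow{\alpha} D \xleftarrow{\beta} a_0$ turns them into homotopies $N(\id{A}) \simeq N(D)$ and $N(a_0) \simeq N(D)$ between endomaps of $N A$, and composing these exhibits the identity of $N A$ as homotopic to the constant map at $a_0$; hence $N A$ is contractible and $A$ is aspherical. Granting the local test property above, the criterion recalled after Theorem \ref{thm:test_cm} (a local test category is a test category as soon as it is aspherical) then shows that $A$ is a test category, and the total asphericity noted above makes it strict.

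The technical core is therefore the key property, and this is where the splitting is used. For fixed $a$, the projection $X \times a \to X$ is obtained from the unique map $a \to e_{\pref{A}}$ to the terminal presheaf by applying $X \times -$, so the statement amounts to saying that $X \times -$ carries the weak equivalence $a \to e_{\pref{A}}$ to a weak equivalence; I would prove it by constructing a contraction of the representable factor compatible with $X$. In outline the décalage supplies such a contraction: the transformations $\alpha$ and $\beta$ contract $D$ onto the constant object $a_0$, and the retraction $r_a : D(a) \to a$ transports this contraction from $D(a)$ back to $a$, yielding the homotopy that collapses the $a$-factor. The main obstacle is that the retractions $r_a$ are \emph{not} natural in $a$, so this contraction cannot be assembled into an endofunctor of a slice together with natural transformations, and in particular does not descend to a décalage on $A/a$. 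I would therefore carry out the construction at the level of nerves, building the contracting homotopy one simplicial degree at a time and inserting the non-functorial retractions by hand, rather than trying to realize it as a strict natural transformation; making this construction compatible with the auxiliary factor $X$ is the delicate bookkeeping at the heart of the argument, the same nerve-level homotopies as in the asphericity step being the basic tool throughout.
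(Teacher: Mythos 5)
Your reduction scheme is sound, and it is in fact exactly the decomposition underlying the reference the paper gives for this statement (Proposition 3.6 and Corollary 3.7 of \cite{MaltsiTheta}): the d\'ecalage alone makes $A$ aspherical via the zig-zag of simplicial homotopies relating $N(\id{A})$, $N(D)$ and $N(a_0)$; the splitting is used to prove total asphericity; and total asphericity together with the separating aspherical interval yields the local test property (hence test, by asphericity of $A$) and then strictness, by the standard criteria from \cite{Maltsi} and \cite{Cisinski} that you quote. All of these reduction steps are correct as stated.

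The genuine gap is the one step you defer, namely the ``key property'', and your proposed method for it would not go through as described. Inserting the non-natural retractions $r_a$ ``by hand, one simplicial degree at a time'' into a nerve-level contracting homotopy is exactly the kind of construction that fails: a simplicial homotopy $NA \times \Delta^1 \to NA$ encodes naturality in its simplicial identities, so a degreewise insertion of non-natural maps will generically not assemble into a simplicial map. The idea you are missing is that the non-naturality of $r$ is harmless because one only ever needs $r$ at the \emph{fixed} targets. Concretely, it suffices to show $a \times b$ is aspherical for all objects $a,b$ (a morphism of presheaves is aspherical as soon as its pullbacks over representables are, so total asphericity already gives your key property for arbitrary $X$, including $X=I$). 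For this, consider the endofunctor of $A/(a\times b)$ sending $(c,\, f\colon c \to a,\, g\colon c \to b)$ to $\bigl(D(c),\, r_a D(f),\, r_b D(g)\bigr)$. This \emph{is} functorial in $(c,f,g)$, since $r_a$ and $r_b$ do not vary with $c$; the transformation $\alpha$ maps the identity functor to it (using $r_a D(f)\alpha_c = r_a\alpha_a f = f$), and $\beta$ maps the constant functor at $\bigl(a_0,\, r_a\beta_a,\, r_b\beta_b\bigr)$ to it (using $D(f)\beta_c=\beta_a$). This zig-zag of natural transformations contracts $A/(a\times b)$, and the rest of your argument then runs as written. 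Without this device, or an equivalent one, the proof is incomplete at precisely the point where the splitting hypothesis enters.
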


\begin{proof}
See Proposition 3.6 and Corollary 3.7 of \cite{MaltsiTheta}.
\end{proof}

\begin{paragr}
Let $\mathcal{D}_A = (A, a_0, D, \alpha, \beta)$ and $\mathcal{D}_B = (B, b_0,
E, \gamma, \delta)$ be two décalages. A \ndef{morphism of décalages} from $\mathcal{D}_A$
to $\mathcal{D}_B$ is a functor $u : A \to B$ such that
\[ uD = Eu,\quad u(a_0) = b_0,\quad u \ast \alpha = \gamma \ast u, \quad\text{and}\quad u \ast
\beta = \delta \ast u.\]
\end{paragr}

\begin{prop}\label{prop:shift_asp}
Let $u : A \to B$ be a functor between small categories. If there exists a
décalage $\mathcal{D}_A$ on $A$ and a splittable décalage $\mathcal{D}_B$ on $B$ such
that $u$ induces a morphism of décalages from $\mathcal{D}_A$ to $\mathcal{D}_B$,
then $u$ is aspherical.
\end{prop}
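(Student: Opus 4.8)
The plan is to prove that $u$ is aspherical by showing that for every object $b$ of $B$ the slice category $A/b$ carries a décalage, and then invoking the general fact that a small category equipped with a décalage is aspherical (a décalage induces, after applying the nerve functor, a contraction of the nerve; I make this precise at the end). Write $\mathcal{D}_A = (A, a_0, D, \alpha, \beta)$ and $\mathcal{D}_B = (B, b_0, E, \gamma, \delta)$, and let the splitting of $\mathcal{D}_B$ provide a retraction $r_c : E(c) \to c$ of $\gamma_c$ for every object $c$ of $B$, so that $r_c \gamma_c = \id{c}$. The hypotheses $uD = Eu$, $u \ast \alpha = \gamma \ast u$ and $u \ast \beta = \delta \ast u$ translate into the component identities $u(D(a)) = E(u(a))$, $u(\alpha_a) = \gamma_{u(a)}$ and $u(\beta_a) = \delta_{u(a)}$, which are the inputs for everything below.

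Fix an object $b$ of $B$. First I would construct an endofunctor $\widetilde{D}$ of $A/b$. On an object $(a, f : u(a) \to b)$ I set
\[ \widetilde{D}(a, f) = \big(D(a),\, r_b \circ E(f) : u(D(a)) = E(u(a)) \to E(b) \to b\big), \]
and on a morphism $g$ of $A/b$ I set $\widetilde{D}(g) = D(g)$. This is exactly the step where the splitting is indispensable: producing a structure morphism $u(D(a)) \to b$ requires a map $E(b) \to b$, and $r_b$ supplies it. That $\widetilde{D}(g)$ is a morphism over $b$ is the computation $r_b E(f') \circ u(D(g)) = r_b E(f') E(u(g)) = r_b E(f' u(g)) = r_b E(f)$, using $uD = Eu$, functoriality of $E$, and the defining relation $f' u(g) = f$ of morphisms of $A/b$.

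Next I would produce the two natural transformations of the décalage. Take the distinguished object $\omega_b = (a_0,\, r_b \delta_b : u(a_0) = b_0 \to E(b) \to b)$. I claim $\alpha_a$ is a morphism $(a,f) \to \widetilde{D}(a,f)$ and $\beta_a$ a morphism $\omega_b \to \widetilde{D}(a,f)$ in $A/b$; naturality in $A$ then upgrades these into natural transformations $\widetilde{\alpha} : \id{A/b} \Rightarrow \widetilde{D}$ and $\widetilde{\beta} : \omega_b \Rightarrow \widetilde{D}$ (the second from the constant functor at $\omega_b$). That $\alpha_a$ lies over $b$ is $r_b E(f) u(\alpha_a) = r_b E(f) \gamma_{u(a)} = r_b \gamma_b f = f$, using $u(\alpha_a) = \gamma_{u(a)}$, naturality of $\gamma$ at $f$, and $r_b \gamma_b = \id{b}$; that $\beta_a$ lies over $b$ is $r_b E(f) u(\beta_a) = r_b E(f) \delta_{u(a)} = r_b \delta_b$, using $u(\beta_a) = \delta_{u(a)}$ and naturality of $\delta$ at $f$ (the constant functor sends $f$ to $\id{b_0}$). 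Hence $(A/b, \omega_b, \widetilde{D}, \widetilde{\alpha}, \widetilde{\beta})$ is a décalage.

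Finally, to conclude I would recall that a natural transformation between two functors induces a homotopy between the maps they induce on nerves. Applied to $\widetilde{\alpha}$ and $\widetilde{\beta}$, this shows that the identity of the nerve of $A/b$ and the constant map at the vertex $\omega_b$ are both homotopic to the map induced by $\widetilde{D}$, hence to each other; the nerve of $A/b$ is therefore contractible and $A/b$ is aspherical. As this holds for every object $b$ of $B$, the functor $u$ is aspherical. The main obstacle is the bookkeeping of the previous paragraph: correctly reading off the component identities from the whiskering equalities $u \ast \alpha = \gamma \ast u$ and $u \ast \beta = \delta \ast u$, and combining them with the naturality of $\gamma$ and $\delta$ and the retraction identities so that $\widetilde{\alpha}$ and $\widetilde{\beta}$ genuinely take values in $A/b$ rather than merely in $A$.
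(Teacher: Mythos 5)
Your proof is correct. The paper does not reproduce an argument here but simply defers to Proposition 3.9 of Cisinski--Maltsiniotis, and your construction --- lifting the d\'ecalage to each slice $A/b$ via $\widetilde{D}(a,f) = (D(a), r_b \circ E(f))$ and the distinguished object $(a_0, r_b\delta_b)$, then contracting the nerve of $A/b$ using the two induced natural transformations --- is precisely the standard argument given in that reference, with all the verifications (that $\widetilde{D}$, $\widetilde{\alpha}$, $\widetilde{\beta}$ land in $A/b$) carried out correctly.
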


\begin{proof}
See Proposition 3.9 of \cite{MaltsiTheta}.
\end{proof}

\section{Shifted globular extensions}\label{sec:shifted_ge}

\begin{paragr}
In this section, we fix a globular extension $(C, F)$
endowed with morphisms
\[ \Thn{i} : \Dn{i} \to \Dn{i} \amalgd{i-1} \Dn{i}, \quad i \ge 1, \]
such that
\[ 
\Thn{i}\Ths{i} = \epsilon^{}_2\Ths{i}
\quad\text{and}\quad
\Thn{i}\Tht{i} = \epsilon^{}_1\Tht{i},
\]
where $\epsilon^{}_1,\epsilon^{}_2 : \Dn{i} \to \Dn{i}\amalgd{i-1} \Dn{i}$
denote the canonical morphisms.

The purpose of the section is to define a new structure of globular
extension on $C$,  i.e., a functor $K : \G \to C$ such that
$(C, K)$ is a globular extension, using the $\Thn{i}$'s. We will call $(C,
K)$ the \ndef{twisted globular extension} of $(C, F)$ (by the $\Thn{i}$'s).
\end{paragr}

\begin{paragr}
We set
\[
\Dnt{i} = \Dn{1} \amalgd{0} \Dn{2} \amalgd{1} \dots
\amalgd{i-1} \Dn{i+1}, \quad i \ge 1.
\]
Recall that we denote the canonical morphisms by
\[\ceps{k} : \Dn{k} \to \Dnt{i},\quad 1 \le k \le i + 1.\]
We define morphisms
\[
\begin{split}
\Thst{i} & : \Dnt{i-1} \to \Dnt{i}, \quad i \ge 1,\\
\Thtt{i} & : \Dnt{i-1} \to \Dnt{i}, \quad i \ge 1,
\end{split}
\]
by the formulas
\[
\begin{split}
\Thst{i} & =
(\ceps{1}, \dots, \ceps{i-1}, (\ceps{i}, \ceps{i+1}\Tht{i+1})\Thn{i}),
\\
\Thtt{i} & =
(\ceps{1}, \dots, \ceps{i}).
\end{split}
\]
(It is obvious that $\Thtt{i}$ is well-defined and we will prove that
$\Thst{i}$ is well-defined in Paragraph \ref{paragr:element}.)

Let $X$ be a globular presheaf on $C$. We set
\[
\Xt{i} = X(\Dnt{i}) =
X_{1} \fibrx{0} X_{2} \fibrx{1} \dots \fibrx{i-1} X_{i+1},
\quad i \ge 1.
\]
For $k$ such that $1 \le k \le i + 1$, we will denote by
$\Glp{k}$ the canonical projection
\[ \Glp{k} : \Xt{i} \to X_k. \]
We will often denote by $\xb$ an element of $\Xt{i}$ and by $x_1, \dots,
x_{i+1}$ the components of $\xb$.

We define maps
\[
\begin{split}
\Glst{i} & : \Xt{i} \to \Xt{i-1}, \quad i \ge 1, \\
\Gltt{i} & : \Xt{i} \to \Xt{i-1}, \quad i \ge 1,
\end{split}
\]
by the formulas dual to the ones defining $\Thst{i}$ and $\Thtt{i}$:
\[
\begin{split}
\Glst{i}(x_1, \dots, x_{i+1}) & = (x_1, \dots, x_{i-1}, x_i \comp_{i-1}^i
\Glt{i+1}(x_{i+1})), \\
\Gltt{i}(x_1, \dots, x_{i+1}) & = (x_1, \dots, x_i).
\end{split}
\]
In particular, once we have proved that $\Thst{i}$ is well-defined, we
will have
\[
\Glst{i} = X(\Thst{i})
\quad\text{and}\quad
\Gltt{i} = X(\Thtt{i}).
\]

\end{paragr}

\begin{paragr}\label{paragr:element}
Let $i \ge 1$. Let us prove that $\Thst{i}$ is well-defined. We need to
show that
\[
\ceps{i-1}\Ths{i-1} = (\ceps{i}, \ceps{i+1}\Tht{i+1})\Thn{i}\Tht{i}\Tht{i-1}.
\]
But
\[
\begin{split}
(\ceps{i}, \ceps{i+1}\Tht{i+1})\Thn{i}\Tht{i}\Tht{i-1}
& =
(\ceps{i}, \ceps{i+1}\Tht{i+1})\cepsu\Tht{i}\Tht{i-1}
\\
& =
\ceps{i}\Tht{i}\Tht{i-1} \\
& =
\ceps{i-1}\Ths{i-1}.
\end{split}
\]
This calculation was straightforward. However, in the sequel of this paper,
we will need to prove more and more complicated identities. For this reason,
we will prove our identities ``using elements''. In \cite{AraThese}, we gave
proofs without using this technique. The result is barely readable.

Let us explain what we mean by ``using elements''. Let $f, g : S \to T$ be
two parallel morphisms of $C$. Suppose we want to prove that $f$ is equal to
$g$. By the (contravariant) Yoneda lemma, it suffices to check
that for every object $U$ of $C$, the two maps
\[
 \Hom_C(T, U) \to \Hom_C(S, U),
\]
induced by $f$ and $g$, are equal. Since every representable presheaf on $C$ is
globular, it suffices to prove
that for every globular presheaf $X$ on $C$, the two maps
\[
 \Hom_{\pref{C}}(T, X) \to \Hom_{\pref{C}}(S, X),
\]
induced by $f$ and $g$, are equal. But by the Yoneda lemma, these maps
correspond to the maps
\[
X(f), X(g) : X(T) \to X(S).
\]
In conclusion, the morphisms $f$ and $g$ are equal if and only the
maps $X(f)$ and $X(g)$ are equal for every globular presheaf $X$.

Let us apply this to
\[
f = \ceps{i-1}\Ths{i-1}
\quad\text{and}\quad
g = (\ceps{i}, \ceps{i+1}\Tht{i+1})\Thn{i}\Tht{i}\Tht{i-1}.
\]
Let $X$ be a globular presheaf on $C$.
For $\xb$ in $\Xt{i}$, we have
\[
X(f)(\xb) = \Gls{i-1}(x_{i-1})
\quad\text{and}\quad
X(g)(\xb) = \Glt{i-1}\Glt{i}(x_i \comp_{i-1}^i \Glt{i+1}(x_{i+1})).
\]
But
\[
\Glt{i-1}\Glt{i}(x_i \comp_{i-1}^i \Glt{i+1}(x_{i+1})) =
\Glt{i-1}\Glt{i}(x_i) = \Gls{i-1}(x_{i-1}). \]
We have thus given another proof of the well-definedness of $\Thst{i}$.
\end{paragr}

\emph{From now on, we fix a globular presheaf $X$ on $C$.}

\begin{prop}\label{prop:def_K}
The maps
\[
\Dn{i}  \mapsto \Dnt{i}, \quad
\Ths{i+1}  \mapsto \Thst{i+1}, \quad 
\Tht{i+1}  \mapsto \Thtt{i+1}, \quad i \ge 0,
\]
define a functor $\G \to C$.
\end{prop}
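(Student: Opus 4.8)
The plan is to verify that the assignment $\Dn{i} \mapsto \Dnt{i}$, $\Ths{i+1} \mapsto \Thst{i+1}$, $\Tht{i+1} \mapsto \Thtt{i+1}$ respects the defining coglobular relations of $\G$. Since $\G$ is, by definition, the category \emph{generated} by the coglobular graph subject only to the relations $\Ths{i+1}\Ths{i} = \Tht{i+1}\Ths{i}$ and $\Ths{i+1}\Tht{i} = \Tht{i+1}\Tht{i}$ for $i \ge 1$, it suffices to check that the proposed images of the generators satisfy the analogous relations, namely
\[
\Thst{i+1}\Thst{i} = \Thtt{i+1}\Thst{i}
\quad\text{and}\quad
\Thst{i+1}\Thtt{i} = \Thtt{i+1}\Thtt{i}, \qquad i \ge 1.
\]
Once these two families of identities are established, the universal property of a free category on a graph with relations produces the desired functor $K : \G \to C$, and there is nothing more to do.

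Following the \emph{using elements} technique set up in Paragraph \ref{paragr:element}, I would fix an arbitrary globular presheaf $X$ on $C$ and, by the (contravariant) Yoneda lemma, prove the dual identities between the maps $\Glst{i}, \Gltt{i}$ on the sets $\Xt{i}$. The two relations to verify become, for $\xb = (x_1, \dots, x_{i+1})$ in $\Xt{i+1}$,
\[
\Glst{i}\Glst{i+1}(\xb) = \Glst{i}\Gltt{i+1}(\xb)
\quad\text{and}\quad
\Gltt{i}\Glst{i+1}(\xb) = \Gltt{i}\Gltt{i+1}(\xb).
\]
The second relation is immediate: both sides truncate to $(x_1, \dots, x_{i-1})$, since $\Gltt{}$ simply forgets the last component and the modification that $\Glst{i+1}$ performs on the $x_{i+1}$ slot is discarded by the subsequent truncations. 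The first relation is where the actual computation lies: applying $\Glst{i+1}$ replaces the $(i+1)$-st component by $x_{i+1} \comp_{i}^{i+1} \Glt{i+2}(x_{i+2})$, after which $\Glst{i}$ acts on the new $i$-th and $(i+1)$-st slots; on the right-hand side, $\Gltt{i+1}$ first drops $x_{i+2}$ and then $\Glst{i}$ modifies the $i$-th component.

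The main obstacle is therefore the verification of the first identity, which amounts to a compatibility between the composition operation $\comp$ and the source/target maps, combined with the globular relations. Concretely, after unwinding both sides the equality reduces to showing that the component modification commutes appropriately, and the essential input is the axiom expressing how $\Gls{i+1}$ interacts with $\comp^{i+1}_{i}$ (the source of a composite along dimension $i$), together with the globular identities $\Gls{i}\Glt{i+1} = \Gls{i}\Gls{i+1}$. I expect this to be a short but careful element-chase rather than a deep argument: each side produces a tuple whose first $i-1$ entries agree trivially, and the remaining entries coincide by a single application of the source formula for $\comp$ from axiom~(1) of the definition of an \oo-precategory. Having checked both relations for every $X$, the Yoneda argument of Paragraph \ref{paragr:element} upgrades them to identities of morphisms in $C$, and the functor $K$ is defined.
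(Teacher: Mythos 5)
Your proposal follows the paper's proof exactly: reduce to the two coglobular relations among the generators, transpose them via the elements technique of Paragraph \ref{paragr:element} into the globular relations for the maps $\Glst{i}$ and $\Gltt{i}$, and verify these by a short tuple computation using condition (1) of the definition of an \oo-precategory. Two small corrections to your sketch: the common value in the second relation is $(x_1,\dots,x_i)$, an element of $\Xt{i-1}$, not $(x_1,\dots,x_{i-1})$; and the key input for the first relation is the \emph{target} clause in codimension one, namely $\Glt{i+1}(u \comp_i^{i+1} v) = \Glt{i+1}(u)$, applied to $u = x_{i+1}$ and $v = \Glt{i+2}(x_{i+2})$, rather than the source clause, and no globular identity is needed at that step.
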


In the sequel of this section, we will denote this functor by $K$.

\begin{proof}
We need to prove that the $\Thst{i}$'s and $\Thtt{i}$'s satisfy the
coglobular relations. By Paragraph \ref{paragr:element}, it suffices to show that
the $\Glst{i}$'s and $\Gltt{i}$'s satisfy the globular relations.

Let $i \ge 2$ and $\xb$ in $\Xt{i}$. We have 
\[
\begin{split}
\Glst{i-1}\Glst{i}(\xb)
& =
\Glst{i-1}(x_1, \dots, x_{i-1}, x_i \comp_{i-1}^i \Glt{i+1}(x_{i+1})) \\
& =
(x_1, \dots, x_{i-2}, x_{i-1} \comp_{i-2}^{i-1}
\Glt{i}(x_i \comp_{i-1}^i \Glt{i+1}(x_{i+1})))\\
& =
(x_1, \dots, x_{i-2}, x_{i-1} \comp_{i-2}^{i-1}
\Glt{i}(x_i)) \\
& =
\Glst{i-1}(x_1, \dots, x_i) \\
& =
\Glst{i-1}\Gltt{i}(\xb)
\end{split}
\]
and
\[
\begin{split}
\Gltt{i-1}\Glst{i}(\xb)
& =
\Gltt{i-1}(x_1, \dots, x_{i-1}, x_i \comp_{i-1}^i \Glt{i+1}(x_{i+1})) \\
& =
(x_1, \dots, x_{i-1})\\
& =
\Gltt{i-1}\Gltt{i}(\xb),
\end{split}
\]
hence the result.
\end{proof}

We collect in the following lemma two identities related to the
structure of $\Xt{i}$ that we will use several times.

\begin{lemma} \label{lemma:struct_xt}
Let $\xb$ in $\Xt{i}$. We have
\[
\begin{split}
\Gls[l]{l+2}(x_{l+2}) & = \Gls[l]{i+1}(x_{i+1}), \quad 0 \le l \le i-1, \\
\Gls{l+1}(x_{l+1}) & = \Glt[l]{i+1}(x_{i+1}), \quad 0 \le l \le i-1.
\end{split}
\]
\end{lemma}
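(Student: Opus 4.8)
The plan is to argue entirely ``using elements'', in the spirit of Paragraph~\ref{paragr:element}, and to reduce everything to the compatibility conditions that define the fibre product $\Xt{i} = X_1 \fibrx{0} \dots \fibrx{i-1} X_{i+1}$. Since the globular sum $\Dnt{i}$ attaches $\Dn{k}$ and $\Dn{k+1}$ along $\Dn{k-1}$ via $\Ths{k}$ and $\Tht[k-1]{k+1}$, a tuple $\xb = (x_1, \dots, x_{i+1})$ lies in $\Xt{i}$ precisely when
\[
\Gls{k}(x_k) = \Glt[k-1]{k+1}(x_{k+1}), \qquad 1 \le k \le i.
\]
These relations, together with the globular identities of $X$, will be the only inputs.

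The key step I would isolate is a pair of telescoping identities, valid whenever $l \le m - 2$:
\[
\Gls[l]{m}(x_m) = \Gls[l]{m+1}(x_{m+1})
\quad\text{and}\quad
\Glt[l]{m}(x_m) = \Glt[l]{m+1}(x_{m+1}).
\]
For the source identity I would use the globular-set fact $\Gls[l]{m-1}\Glt[m-1]{m+1} = \Gls[l]{m+1}$, which holds because $l < m-1$ leaves a source factor $\Gls{m-1}$ to absorb the target maps one level at a time through $\Gls{k}\Glt{k+1} = \Gls{k}\Gls{k+1}$; substituting the compatibility $\Glt[m-1]{m+1}(x_{m+1}) = \Gls{m}(x_m)$ then gives $\Gls[l]{m+1}(x_{m+1}) = \Gls[l]{m-1}\Gls{m}(x_m) = \Gls[l]{m}(x_m)$. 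The target identity is symmetric: the composite $\Glt[l]{m-1}\Glt[m-1]{m+1} = \Glt[l]{m+1}$ is immediate, and after the same substitution one rewrites the trailing $\Gls{m}$ as $\Glt{m}$ via $\Glt{m-1}\Gls{m} = \Glt{m-1}\Glt{m}$, which is legitimate because $l \le m-2$ ensures $\Glt[l]{m-1}$ genuinely ends in $\Glt{m-1}$.

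Granting these, both assertions follow by chaining, every step staying in the admissible range $l \le m-2$. The first identity is obtained by telescoping the source relation over $m = l+2, \dots, i$,
\[
\Gls[l]{l+2}(x_{l+2}) = \Gls[l]{l+3}(x_{l+3}) = \dots = \Gls[l]{i+1}(x_{i+1}),
\]
the case $l = i-1$ being the empty chain. For the second identity I would first invoke the compatibility at level $l+1$, namely $\Gls{l+1}(x_{l+1}) = \Glt[l]{l+2}(x_{l+2})$, and then telescope the target relation over $m = l+2, \dots, i$ to reach $\Glt[l]{i+1}(x_{i+1})$.

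The one subtlety to watch --- and, I expect, the main obstacle if one is careless --- is the constraint $l \le m-2$ in the telescoping step; it is exactly this that dictates why the statement starts from $x_{l+2}$ and why $\Gls{l+1}(x_{l+1})$ is paired with an iterated \emph{target}. The constraint cannot be pushed to $l = m-1$, since there the $(m-1)$-source $\Gls[m-1]{m+1}$ and $(m-1)$-target $\Glt[m-1]{m+1}$ of an $(m+1)$-cell generally differ, so the conversion between source and target maps breaks down. Once every index is kept in this safe regime, the two identities are formal consequences of the fibre-product compatibilities and the globular relations.
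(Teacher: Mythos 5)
Your proof is correct and follows essentially the same route as the paper's: both arguments telescope the fibre-product compatibilities $\Gls{k}(x_k) = \Glt[k-1]{k+1}(x_{k+1})$ through the globular relations, one level at a time, with the constraint $l \le m-2$ playing exactly the role you identify. The only difference is presentational --- you isolate the one-step telescoping identities as an explicit intermediate claim, whereas the paper writes out the chain directly.
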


\begin{proof}
We have
\[
\begin{split}
\Gls[l]{l+2}(x_{l+2})
& = \Gls{l+1}\Gls{l+2}(x_{l+2}) \\
& = \Gls{l+1}\Glt{l+2}\Glt{l+3}(x_{l+3}) \\
& = \Gls[l]{l+3}(x_{l+3}) \\
& = \cdots \\
& = \Gls[l]{i+1}(x_{i+1}), \\
\end{split}
\]
and
\[
\begin{split}
\Gls{l+1}(x_{l+1})
& = \Glt[l]{l+2}(x_{l+2}) \\
& = \Glt{l+1}\Glt{l+2}(x_{l+2}) \\
& = \Glt{l+1}\Gls{l+2}(x_{l+2}) \\
& = \Glt{l+1}\Glt[l+1]{l+3}(x_{l+3}) \\
& = \Glt[l]{l+3}(x_{l+3}) \\
& = \cdots \\
& = \Glt[l]{i+1}(x_{i+1}).
\end{split}
\]
\end{proof}

\begin{paragr}
Let us introduce some more notations. We set
\[
\begin{split}
\Dnt{j,i} & = \Dn{j+1} \amalgd{j} \Dn{j+2} \amalgd{j+1} \dots
\amalgd{i-1} \Dn{i+1}, \quad i \ge j \ge 0.
\end{split}
 \]
In particular, we have
\[
\begin{split}
\Dnt{0,i} & = \Dnt{i},\quad i \ge 0,\\
\Dnt{i} & = \Dnt{0,k} \amalgd{k} \Dnt{k+1,i},
\quad i > k \ge 0, \\
\Dnt{j,i} & = \Dnt{j,k} \amalgd{k}
\Dnt{k+1,i}, \quad i > k \ge j \ge 0.
\end{split}
\]
Dually, we set
\[
\begin{split}
\Xt{j,i} & = X(\Dnt{j, i}) = X_{j+1} \fibrx{j} X_{j+2} \fibrx{j+1} \dots
\fibrx{i-1} X_{i+1}, \quad i \ge j \ge 0,
\end{split}
 \]
and we have
\[
\begin{split}
\Xt{0,i} & = \Xt{i},\quad i \ge 0,\\
\Xt{i} & = \Xt{0,k} \fibrx{k} \Xt{k+1,i},
\quad i > k \ge 0, \\
\Xt{j,i} & = \Xt{j,k} \fibrx{k}
\Xt{k+1,i}, \quad i > k \ge j \ge 0.
\end{split}
\]
\end{paragr}

We will now prove that $(C, K)$ is a globular extension.

\begin{lemma}\label{lemma:cocart}
Let $\C$ be a category and let $f : X \to Y$, $g^{}_X : A \to X$, $g^{}_Y :
A \to X$ and $g^{}_Z : A \to Z$ be morphisms of $\C$. Suppose that the
amalgamated sums
\[
X \amalg_{A} Z = (X, g^{}_X) \amalg_{A} (g_Z, Z)
\quad\text{and}\quad
Y \amalg_{A} Z = (Y, g^{}_Y) \amalg_{A} (g_Z, Z)
\]
exist in $\C$, and that we have $fg^{}_X = g^{}_Y$, so that the morphism
\[ X \amalg_{A} Z \xrightarrow{f\, \amalg_A Z} Y \amalg_{A} Z \]
is well-defined. Then the square
\[
\xymatrix{
  X \ar[d]_{f} \ar[r] & X \amalg_{A} Z \ar[d]^{f\, \amalg_A Z} \\
  Y \ar[r] & Y \amalg_{A} Z \\
}
\]
is cocartesian.
\end{lemma}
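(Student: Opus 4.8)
The statement is the familiar fact that pushouts paste, so the plan is to verify the universal property of the pushout directly rather than invoke any external result. First I would fix notation for the canonical morphisms. Denote by $\iota^{}_X \colon X \to X \amalg_{A} Z$ and $\kappa^{}_X \colon Z \to X \amalg_{A} Z$ the two canonical morphisms of the first amalgamated sum, so that $\iota^{}_X g^{}_X = \kappa^{}_X g^{}_Z$, and write $\iota^{}_Y, \kappa^{}_Y$ for those of the second, so that $\iota^{}_Y g^{}_Y = \kappa^{}_Y g^{}_Z$ (here $g^{}_Y$ is the morphism $A \to Y$ occurring in the formation of $Y \amalg_{A} Z$). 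The two horizontal arrows of the square are $\iota^{}_X$ and $\iota^{}_Y$, and the comparison morphism $f \amalg_{A} Z$ is by definition the unique morphism satisfying $(f \amalg_{A} Z)\iota^{}_X = \iota^{}_Y f$ and $(f \amalg_{A} Z)\kappa^{}_X = \kappa^{}_Y$.

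The goal is then to check that $Y \amalg_{A} Z$, equipped with the legs $\iota^{}_Y \colon Y \to Y \amalg_{A} Z$ and $f \amalg_{A} Z \colon X \amalg_{A} Z \to Y \amalg_{A} Z$, is the pushout of the span $X \amalg_{A} Z \xleftarrow{\iota^{}_X} X \xrightarrow{f} Y$. So I would fix an object $W$ together with morphisms $v \colon X \amalg_{A} Z \to W$ and $u \colon Y \to W$ satisfying $v \iota^{}_X = u f$, and produce a unique $h \colon Y \amalg_{A} Z \to W$ with $h (f \amalg_{A} Z) = v$ and $h \iota^{}_Y = u$. To build $h$, apply the universal property of the second amalgamated sum to the pair $(u \colon Y \to W,\ v\kappa^{}_X \colon Z \to W)$. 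The compatibility over $A$ is $u g^{}_Y = v \kappa^{}_X g^{}_Z$, which holds because $u g^{}_Y = u f g^{}_X = v \iota^{}_X g^{}_X = v \kappa^{}_X g^{}_Z$, using $g^{}_Y = f g^{}_X$ and $\iota^{}_X g^{}_X = \kappa^{}_X g^{}_Z$. This furnishes a unique $h$ with $h \iota^{}_Y = u$ and $h \kappa^{}_Y = v \kappa^{}_X$.

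It then remains to verify $h(f \amalg_{A} Z) = v$, which I would establish by precomposing with $\iota^{}_X$ and $\kappa^{}_X$ and invoking the universal property of the first amalgamated sum: one gets $h(f \amalg_{A} Z)\iota^{}_X = h\iota^{}_Y f = u f = v \iota^{}_X$ and $h(f \amalg_{A} Z)\kappa^{}_X = h\kappa^{}_Y = v\kappa^{}_X$. For uniqueness, any $h'$ with the required properties satisfies $h'\kappa^{}_Y = h'(f \amalg_{A} Z)\kappa^{}_X = v\kappa^{}_X$ and $h'\iota^{}_Y = u$, so $h'$ agrees with $h$ on both canonical morphisms of $Y \amalg_{A} Z$ and hence equals $h$.

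This is a routine diagram chase with no genuine obstacle; the only point that deserves attention is the well-definedness step, namely the compatibility of the pair $(u, v\kappa^{}_X)$ over $A$, which is exactly where the hypothesis $f g^{}_X = g^{}_Y$ is used. Conceptually the lemma is an instance of the pasting law for pushouts: placing the cocartesian square defining $X \amalg_{A} Z$ to the left of the square in question yields an outer rectangle which, thanks to $g^{}_Y = f g^{}_X$, is precisely the cocartesian square defining $Y \amalg_{A} Z$, so the remaining square is cocartesian as well. I would nonetheless give the explicit verification above, since it is short and keeps the proof self-contained.
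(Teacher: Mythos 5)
Your proof is correct, but it follows a different route from the paper's. You verify the universal property of the pushout directly: given a cocone $(u,v)$ under the span $X \amalg_A Z \leftarrow X \rightarrow Y$, you build the mediating morphism $h$ out of $Y \amalg_A Z$ from the pair $(u, v\kappa^{}_X)$, check the compatibility over $A$ (the one place where $fg^{}_X = g^{}_Y$ enters), and then establish existence and uniqueness by testing against the two canonical injections of each amalgamated sum. The paper instead gives a one-line structural argument: the square in question is the coproduct, in the coslice category $A\backslash\C$, of the square with horizontal identities on $X$ and $Y$ (vertical arrows $f$) and the square with vertical identities on $A$ and $Z$ (horizontal arrows $g^{}_Z$), both of which are trivially cocartesian; since coproducts in $A\backslash\C$ are amalgamated sums over $A$ in $\C$ and colimits commute with colimits, the coproduct square is cocartesian. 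The paper's argument is shorter but leans on the identification of coproducts in the coslice with amalgamated sums and on the stability of cocartesian squares under coproducts; yours is longer but entirely elementary and self-contained, which is arguably preferable for a lemma invoked repeatedly in explicit computations. You also correctly read $g^{}_Y$ as a morphism $A \to Y$ (the statement's ``$g^{}_Y : A \to X$'' is a typo), which is needed for $fg^{}_X = g^{}_Y$ to typecheck.
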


\begin{proof}
The square of the statement is the coproduct in $A\backslash\C$ of the
squares
\[
\xymatrix{
  X \ar[r]^{\id{X}} \ar[d]_f & X \ar[d]|{}="a"^f &
  &
  A \ar[r]^{g^{}_Z} \ar[d]|{}="b"_{\id{A}} & Z
  \ar[d]^{\id{Z}} \\ 
  \ar@{}"a";"b"|{\textstyle\text{and}}
  Y \ar[r]_{\id{Y}} & Y &
  &
  A \ar[r]_{g^{}_Z} & Z\pbox{,} \\
}
\]
which are both cocartesian in $A\backslash\C$.
\end{proof}

\begin{prop}
Let $T  = \tabdimijk$ be a table of dimensions of width $2$. The globular
sum $\Dnt{i} \amalgdt{k} \Dnt{j}$ associated to $T$ in $(C, K)$ exists and
is canonically isomorphic to
\[
\Dnt{i} \amalgd{k} \Dnt{k+1,j} =
(\Dnt{i}, \ceps{i+1}\Ths[k]{i+1}) \amalgd{k}
(\cepsg{1}\Tht[k]{k+2}, \Dnt{k+1,j}).
\]
\end{prop}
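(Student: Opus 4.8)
The plan is to realise the width-$2$ globular sum in $(C,K)$ as an iterated pushout in $C$ and then to collapse it, by Lemma~\ref{lemma:cocart}, onto a single globular sum of $(C,F)$. By definition the globular sum in $(C,K)$ associated to $T$ is the pushout
\[
(\Dnt{i}, \Thst[k]{i}) \amalgdt{k} (\Thtt[k]{j}, \Dnt{j}),
\qquad \Thst[k]{i} = K(\Ths[k]{i}),\quad \Thtt[k]{j} = K(\Tht[k]{j}).
\]
I would start from the decomposition $\Dnt{j} = \Dnt{k} \amalgd{k} \Dnt{k+1,j}$, which presents $\Dnt{j}$ as the pushout of $\Dnt{k} \xleftarrow{u} \Dn{k} \xrightarrow{v} \Dnt{k+1,j}$; its two legs are those of the junction over $\Dn{k}$ between the last factor $\Dn{k+1}$ of $\Dnt{k}$ and the first factor $\Dn{k+2}$ of $\Dnt{k+1,j}$, namely $u = \ceps{k+1}\Ths{k+1}$ and $v = \cepsu\Tht[k]{k+2}$.

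The first thing to verify is that $\Thtt[k]{j} : \Dnt{k} \to \Dnt{j}$ is precisely the canonical inclusion of $\Dnt{k}$ into this pushout. Dually, since $\Gltt{m}(x_1,\dots,x_{m+1}) = (x_1,\dots,x_m)$, the composite $\Gltt[k]{j}$ is the projection $\Xt{j} \to \Xt{k}$ retaining the first $k+1$ coordinates, which is exactly the projection dual to the inclusion $\Dnt{k}\hookrightarrow\Dnt{j}$; by the ``using elements'' principle of Paragraph~\ref{paragr:element} the two morphisms coincide.

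The heart of the argument, and the step I expect to be the main obstacle, is the identity $\Thst[k]{i}\,u = \ceps{i+1}\Ths[k]{i+1}$. Working dually with the map $\Glst[k]{i} = \Glst{k+1}\cdots\Glst{i}$ dual to $\Thst[k]{i}$, a descending induction on $m$ from $i$ to $k+1$, based on the defining formula for $\Glst{m}$ and on the collapse $\Glt{m+1}(a \comp^{m+1}_m b) = \Glt{m+1}(a)$, gives for $\xb = (x_1,\dots,x_{i+1})$ in $\Xt{i}$
\[
\Glst[k]{i}(\xb) = (x_1,\dots,x_k,\ x_{k+1} \comp^{k+1}_k \Glt{k+2}(x_{k+2})).
\]
The map dual to $u$ is $\Gls{k+1}\Glp{k+1}$; applying it gives $\Gls{k+1}(x_{k+1}\comp^{k+1}_k \Glt{k+2}(x_{k+2})) = \Gls{k+1}\Glt{k+2}(x_{k+2}) = \Gls{k+1}\Gls{k+2}(x_{k+2}) = \Gls[k]{k+2}(x_{k+2})$, using first the source identity for $\comp^{k+1}_k$ and then a globular relation. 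By the first identity of Lemma~\ref{lemma:struct_xt} this equals $\Gls[k]{i+1}(x_{i+1})$, the value at $\xb$ of the map dual to $\ceps{i+1}\Ths[k]{i+1}$; the ``using elements'' principle then yields the equality of morphisms.

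Finally I would apply Lemma~\ref{lemma:cocart} with $A = \Dn{k}$, $X = \Dnt{k}$, $Y = \Dnt{i}$, $Z = \Dnt{k+1,j}$, $f = \Thst[k]{i}$, $g^{}_X = u$ and $g^{}_Z = v$. Then $f g^{}_X = \ceps{i+1}\Ths[k]{i+1}$ by the previous step, $X\amalgd{k} Z = \Dnt{j}$ by the decomposition, and $Y\amalgd{k} Z = \Dnt{i}\amalgd{k}\Dnt{k+1,j}$ is the globular sum of $(C,F)$ attached to the table of dimensions obtained by splicing the tables of $\Dnt{i}$ and $\Dnt{k+1,j}$ over $\Dn{k}$, which is legitimate (the new lower index $k$ being smaller than its neighbours $i+1$ and $k+2$) and hence exists in $C$. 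Lemma~\ref{lemma:cocart} asserts that the resulting square is cocartesian; as its top edge $\Dnt{k}\to\Dnt{j}$ is the inclusion identified above with $\Thtt[k]{j}$ and its left edge is $\Thst[k]{i}$, this exhibits $\Dnt{i}\amalgd{k}\Dnt{k+1,j}$ as the pushout $(\Dnt{i},\Thst[k]{i}) \amalgdt{k} (\Thtt[k]{j},\Dnt{j})$, establishing both the existence and the stated canonical isomorphism. Every step other than the element computation above is formal pushout manipulation.
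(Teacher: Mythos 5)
Your proof is correct and follows essentially the same route as the paper: the same decomposition $\Dnt{j} = \Dnt{k} \amalgd{k} \Dnt{k+1,j}$, the same application of Lemma~\ref{lemma:cocart} with the same data, and the same element-wise verification of $\Thst[k]{i}\ceps{k+1}\Ths{k+1} = \ceps{i+1}\Ths[k]{i+1}$ via Lemma~\ref{lemma:struct_xt}. The only cosmetic differences are that you make explicit the identification of $\Thtt[k]{j}$ with the canonical inclusion (left implicit in the paper) and compute $\Glst[k]{i}$ by descending induction where the paper uses the globular relation $\Glst[k]{i} = \Glst{k+1}\Gltt[k+1]{i}$.
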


\begin{proof}
We prove that the square
\[
\xymatrix{
\Dnt{k} \ar[r] \ar[d]_{\Thst[k]{i}} &
\Dnt{j} = \Dnt{k} \amalgd{k} \Dnt{k+1, j} 
\ar[d]^{\Thst[k]{i} \amalgd{k} \Dnt{k+1,j}} \\
\Dnt{i} \ar[r] &
(\Dnt{i}, \ceps{i+1}\Ths[k]{i+1}) \amalgd{k}
(\cepsg{1}\Tht[k]{k+2}, \Dnt{k+1,j})
}
\]
is cocartesian by applying the previous lemma to
\[
X = \Dnt{k},\quad Y = \Dnt{i},\quad Z = \Dnt{k+1,j},\quad A = \Dn{k},
\]
and
\[
f = \Thst[k]{i}, \quad g^{}_X = \ceps{k+1}\Ths{k+1}, \quad g^{}_Y = \ceps{i+1}\Ths[k]{i+1}, 
\quad g^{}_Z = \cepsg{1}\Tht[k]{k+2}.
\]
The two amalgamated sums appearing in the square exist since they are globular
sums in $(C, F)$. Hence, to apply the lemma, it
suffices to check that
\[ \Thst[k]{i}\ceps{k+1}\Ths{k+1} = \ceps{i+1}\Ths[k]{i+1}. \]
Let us prove this identity using elements. Let $\xb$ in $\Xt{i}$. We need to
prove that
\[ \Gls{k+1}\Glp{k+1}\Glst[k]{i}(\xb) = \Gls[k]{i+1}(x_{i+1}). \]
But
\[
\begin{split}
\Gls{k+1}\Glp{k+1}\Glst[k]{i}(\xb) & =
\Gls{k+1}\Glp{k+1}\Glst{k+1}\Gltt[k+1]{i}(\xb) \\
& =
\Gls{k+1}\Glp{k+1}\Glst{k+1}(x_1, \dots, x_{k+2}) \\
& =
\Gls{k+1}(x_{k+1} \comp^{k+1}_k
\Glt{k+2}(x_{k+2})) \\
& = \Gls{k+1}\Glt{k+2}(x_{k+2}) \\
& = \Gls[k]{k+2}(x_{k+2}) \\
& = \Gls[k]{i+1}(x_{i+1}), \\
\end{split}
\]
where the last equality follows from Lemma \ref{lemma:struct_xt}.
\end{proof}

\begin{prop}\label{prop:shifted_ge}
Let 
\[ T = \tabdim \]
be a table of dimensions. The globular sum
$\Dnt{i_1} \amalgdt{i'_1} \dots \amalgdt{i'_{n-1}} \Dnt{i_n}$
associated to $T$ in $(C, K)$
 exists
and is canonically isomorphic to
\[
\Dnt{i_1} \amalgd{i'_1} \Dnt{i'_1+1, i_2} \amalgd{i'_2} \Dnt{i'_2+1, i_3}
\amalgd{i'_3} \dots
\amalgd{i'_{n-1}} \Dnt{i'_{n-1} + 1, i_n}.
\]

In particular, $(C, K)$ is a globular extension.
\end{prop}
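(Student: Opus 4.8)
The plan is to argue by induction on the width $n$ of the table $T$, using the width-two case established in the preceding proposition together with Lemma~\ref{lemma:cocart} to telescope one factor at a time. The cases $n = 1$ (where the twisted globular sum is simply $\Dnt{i_1}$) and $n = 2$ (the preceding proposition) serve as the base of the induction.

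For the inductive step, I would first use associativity of iterated amalgamated sums to write the twisted globular sum associated to $T$ as
\[
S_n = S_{n-1} \amalgdt{i'_{n-1}} \Dnt{i_n},
\]
where $S_{n-1}$ is the twisted globular sum associated to the table obtained from $T$ by deleting its last column. By the induction hypothesis, $S_{n-1}$ exists and is canonically isomorphic to
\[
R_{n-1} = \Dnt{i_1} \amalgd{i'_1} \Dnt{i'_1+1, i_2} \amalgd{i'_2} \dots
\amalgd{i'_{n-2}} \Dnt{i'_{n-2}+1, i_{n-1}}.
\]
The pushout $S_n$ is taken over $\Dnt{i'_{n-1}}$, with left leg the composite $\Dnt{i'_{n-1}} \xrightarrow{\Thst[i'_{n-1}]{i_{n-1}}} \Dnt{i_{n-1}} \to S_{n-1}$ and right leg the twisted target $\Thtt[i'_{n-1}]{i_n} : \Dnt{i'_{n-1}} \to \Dnt{i_n}$.

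Next I would collapse the final amalgamation over $\Dnt{i'_{n-1}}$ to one over $\Dn{i'_{n-1}}$. The key structural facts are that $\Dnt{i_n}$ decomposes as $\Dnt{i'_{n-1}} \amalgd{i'_{n-1}} \Dnt{i'_{n-1}+1, i_n}$ (using $i_n > i'_{n-1}$ and $\Dnt{i'_{n-1}} = \Dnt{0, i'_{n-1}}$), and that $\Thtt[i'_{n-1}]{i_n}$ is exactly the inclusion of this initial factor. Applying Lemma~\ref{lemma:cocart} with $X = \Dnt{i'_{n-1}}$, $A = \Dn{i'_{n-1}}$, $Z = \Dnt{i'_{n-1}+1, i_n}$, $Y = S_{n-1}$, and $f$ the left leg above, the pushout $S_{n-1} \amalgdt{i'_{n-1}} \Dnt{i_n}$ collapses to $S_{n-1} \amalgd{i'_{n-1}} \Dnt{i'_{n-1}+1, i_n}$. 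Since this last amalgamated sum is a genuine globular sum in $(C, F)$, it exists, and this simultaneously yields the existence of $S_n$.

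It then remains to check that the gluing legs match those of the telescope, so that $S_{n-1} \amalgd{i'_{n-1}} \Dnt{i'_{n-1}+1, i_n}$ is literally $R_n$. Here I would invoke the identity $\Thst[k]{i}\ceps{k+1}\Ths{k+1} = \ceps{i+1}\Ths[k]{i+1}$ already verified in the proof of the preceding proposition (with $k = i'_{n-1}$, $i = i_{n-1}$) to identify the leg $\Dn{i'_{n-1}} \to S_{n-1} \cong R_{n-1}$ with the canonical source map $\ceps{i_{n-1}+1}\Ths[i'_{n-1}]{i_{n-1}+1}$ into the top disk of the last factor $\Dnt{i'_{n-2}+1, i_{n-1}}$, and the leg $\Dn{i'_{n-1}} \to \Dnt{i'_{n-1}+1, i_n}$ with $\cepsg{1}\Tht[i'_{n-1}]{i'_{n-1}+2}$, exactly as prescribed by the telescope. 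This gives the canonical isomorphism $S_n \cong R_n$. The main obstacle is precisely this bookkeeping of coprojections and gluing maps under the inductive isomorphism; the pushout manipulations themselves are routine once Lemma~\ref{lemma:cocart} is available. Finally, since every table of dimensions has finite width, the induction shows that all globular sums in $(C, K)$ exist, whence $(C, K)$ is a globular extension.
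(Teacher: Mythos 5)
Your proof is correct and follows essentially the same route as the paper: induction on the width of the table, with Lemma~\ref{lemma:cocart} collapsing the amalgamation over $\Dnt{i'_k}$ to one over $\Dn{i'_k}$, and the identity $\Thst[k]{i}\ceps{k+1}\Ths{k+1} = \ceps{i+1}\Ths[k]{i+1}$ from the width-two case identifying the gluing legs. The only (immaterial) difference is that you peel off the last factor of the table at each step, whereas the paper peels off the first.
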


As announced at the beginning of this section, we will call $(C, K)$ the
\ndef{twisted globular extension} of $(C, F)$ (by the $\Thn{i}$'s).

\begin{proof}
We prove the result by induction on the width $n$ of the table of
dimensions. Suppose
\[
\begin{split}
\MoveEqLeft \Dnt{i_2} \amalgdt{i'_2} \dots \amalgdt{i'_{n-1}} \Dnt{i_n} \\
& = 
\Dnt{i_2} \amalgd{i'_2} \Dnt{i'_2+1, i_3} \amalgd{i'_3} \dots
\amalgd{i'_{n-1}} \Dnt{i'_{n-1} + 1, i_n} \\
& =
\Dnt{i'_1} \amalgd{i'_1}
\Big(
\Dnt{i'_1+1, i_2} \amalgd{i'_2} \Dnt{i'_2+1, i_3} \amalgd{i'_3} \dots
\amalgd{i'_{n-1}} \Dnt{i'_{n-1} + 1, i_n}
\Big).
\end{split}
\]

As in the proof of the previous proposition, by using Lemma
\ref{lemma:cocart}, we obtain that the square
\[
\xymatrix{
\Dnt{i'_1} \ar[r] \ar[d]_{\Thst[i'_1]{i_1}} &
\Dnt{i'_1} \amalgd{i'_1}
\Big(
\Dnt{i'_1+1, i_2} \amalgd{i'_2}
\amalgd{i'_2} \Dnt{i'_2+1, i_3} \dots
\amalgd{i'_{n-1}} \Dnt{i'_{n-1} + 1, i_n}
\Big)
\ar[d]^{\Thst[i'_1]{i_1}\amalgd{i'_1} \id{}}
\\
\Dnt{i_1} \ar[r] &
\Dnt{i_1} \amalgd{i'_1}
\Big(
\Dnt{i'_1+1, i_2} \amalgd{i'_2}
\amalgd{i'_2} \Dnt{i'_2+1, i_3} \dots
\amalgd{i'_{n-1}} \Dnt{i'_{n-1} + 1, i_n}
\Big)
}
\]
is cocartesian. Hence the result.
\end{proof}

\begin{paragr}\label{paragr:iso_can}
Dually, if
\[ \tabdim \]
is a table of dimensions, the globular product
$\Xt{i_1} \fibrxt{i'_1} \dots \fibrxt{i'_{n-1}} \Xt{i_n}$ exists
and is canonically isomorphic to 
\[
\Xt{i_1} \fibrx{i'_1} \Xt{i'_1+1, i_2} \fibrx{i'_2} \Xt{i'_2+1, i_3}
\fibrx{i'_3} \dots
\fibrx{i'_{n-1}} \Xt{i'_{n-1} + 1, i_n}.
\]
Moreover, the canonical isomorphism
\[
c : \Xt{i_1} \fibrxt{i'_1} \dots  \fibrxt{i'_{n-1}} \Xt{i_n} \to
\Xt{i_1} \fibrx{i'_1} \Xt{i'_1+1,i_2} \fibrx{i'_2} \dots 
\fibrx{i'_{n-1}} \Xt{i'_{n-1}+1, i_n}
\]
is given by the formula
\[
\begin{split}
\MoveEqLeft
c\big(x^1_1, \dots, x^1_{i_1 + 1}, x^2_1, \dots, x^2_{i_2 + 1},
\dots, x^n_1, \dots, x^n_{i_n + 1}\big) \\
& =
\big(x^1_1, \dots, x^1_{i_1 + 1}, x^2_{i'_1+2}, \dots, x^2_{i_2 + 1},
\dots, x^n_{i'_{n-1} + 2}, \dots, x^n_{i_n + 1}\big). \\
\end{split}
\]

Let us describe the inverse of $c$ starting by the case $n =
2$. Let $\tabdimijk$ be a table of dimensions of width $2$ and let $(\xb, \yb)$ be an
element of $\Xt{i} \fibrxt{k} \Xt{j}$. By definition,
we have $\Glst[k]{i}(\xb) = \Gltt[k]{j}(\yb)$.
Since $\Glst[k]{i} = \Glst{k+1}\Gltt[k+1]{i}$, this means that
\[ \big(x_1, \dots, x_k, x_{k+1} \comp_k^{k+1} \Glt{k+2}(x_{k+2})\big) = 
\big(y_1, \dots, y_{k+1}\big), \]
i.e., that
\begin{equation}\label{eq:fibr_prod}
\tag{$\ast$}
\begin{split}
y_l & = x_l,\quad 1 \le l \le k,\\
y_{k+1} & = x_{k+1} \comp_k^{k+1} \Glt{k+2}(x_{k+2}).
\end{split}
\end{equation}
The inverse
\[ c^{-1} :  \Xt{i} \fibrx{k} \Xt{k+1,j} \to \Xt{i} \fibrxt{k} \Xt{j} \]
is thus given by the formula
\[
\begin{split}
\MoveEqLeft
c^{-1}\big(x_1, \dots, x_{i+1}, y_{k+2}, \dots, y_{j+1}\big) \\
& =
\big(x_1, \dots, x_{i+1}, x_1, \dots, x_k, x_{k+1} \comp_k^{k+1}
\Glt{k+2}(x_{k+2}), y_{k+2}, \dots, y_{j+1}\big).
\end{split}
\]

In the general case, the inverse
\[
c^{-1} : \Xt{i_1} \fibrx{i'_1} \Xt{i'_1+1,i_2} \fibrx{i'_2} \dots
\fibrx{i'_{n-1}} \Xt{i'_{n-1}+1, i_n}
\to
\Xt{i_1} \fibrxt{i'_1} \dots  \fibrxt{i'_{n-1}} \Xt{i_n}
\]
is given by the formula
\[
\begin{split}
\MoveEqLeft
c^{-1}\big(x^1_1, \dots, x^1_{i_1 + 1}, x^2_{i'_1+2}, \dots, x^2_{i_2 + 1},
\dots, x^n_{i'_{n-1} + 2}, \dots, x^n_{i_n + 1}\big) \\
& = \big(x^1_1, \dots, x^1_{i_1 + 1}, x^2_1, \dots, x^2_{i_2 + 1},
\dots, x^n_1, \dots, x^n_{i_n + 1}\big),
\end{split}
\]
where the
\[ x^l_j, \quad 2 \le l \le n, \quad 1 \le j \le i'_l+1, \]
are defined (by induction on $l$) by
\[
\begin{split}
x_j^{l+1} & = x^l_j, \quad 1 \le j \le i'_l, \\
x^{l+1}_{i'_l+1} & = x^l_{i'_l+1} \comp^{i'_l+1}_{i'_l}
\Glt{i'_l+2}(x^l_{i'_l+2}).
\end{split}
\]
\end{paragr}

\begin{paragr}\label{paragr:shifted_under_thz}
Since $(C, K)$ is a globular extension, by the universal property of $\Thz$
(Proposition \ref{prop:prop_univ_thz}), we can lift $K$ to a globular
functor $K_0 : \Thz \to C$ defined up to a unique isomorphism. 
Suppose now that a globular lifting $F_0 : \Thz \to C$ to $F$ is given.
Proposition \ref{prop:shifted_ge} allows us to express globular sums of $(C,
K)$ in terms of those of $(C, F)$. The globular lifting $K_0 : \Thz \to C$
is hence uniquely determined by $F_0$. We will call $(C, K_0)$ the
\ndef{twisted globular extension under $\Thz$} of $(C, F_0)$.
\end{paragr}

\section{Shifted groupoidal globular extensions}\label{sec:shifted_gre}

\begin{paragr}
In this section, we fix a pregroupoidal globular extension $(C, F_0)$.
In particular, the globular extension $C$ is endowed with morphisms
\[ \Thn{i} = \Thn[i-1]{i}, \quad i \ge 1, \]
and we can thus apply the previous section
and in particular Proposition~\ref{prop:shifted_ge} and Paragraph
\ref{paragr:shifted_under_thz} to get a twisted globular extension $(C,
K_0)$ under $\Thz$.

The purpose of the section is to put (under some assumptions) a
structure of pregroupoidal globular extension on $(C, K_0)$ and to prove that
if $(C, F_0)$ is groupoidal, then so is $(C, K_0)$. In the latter case, we
will call $(C, K_0)$ (endowed with its additional structure) the
\ndef{twisted groupoidal globular extension} of $(C, K_0)$.
\end{paragr}

\begin{paragr}\label{paragr:def_thn}
We define morphisms
\[
\begin{split}
\Thnt[j]{i} & : \Dnt{i} \to \Dnt{i} \amalgdt{j} \Dnt{i} = \Dnt{i} \amalgd{j}
\Dnt{j+1, i}, \quad i > j \ge 0,\\
\Thkt{i} & : \Dnt{i+1} \to \Dnt{i}, \quad i \ge 0,\\
\Thwt[j]{i} & : \Dnt{i} \to \Dnt{i}, \quad i > j \ge 0,\\
\end{split}
\]
by the formulas
\[
\begin{split}
\Thnt[j]{i} & =
\left(\cepsg{1}, \dots, \cepsg{j+1}, \left(\cepsg{j+2},
\cepsp{j+2}\right)\Thn[j]{j+2}, \dots, \left(\cepsg{i+1},
\cepsp{i+1}\right)\Thn[j]{i+1}\right),\\*
& \phantom{=1} \text{where $\cepsp{k}$ denotes $\cepsg{k+i-j}$,} \\
\Thkt{i} & =
\left(\ceps{1}, \dots, \ceps{i+1}, \ceps{i+1}\Ths{i+1}\Thk{i}\Thk{i+1}\right),\\
\Thwt[j]{i} & =
\left(\ceps{1}, \dots, \ceps{j}, \left(\ceps{j+1},
\ceps{j+2}\Tht{j+2}\right)\Thn{j+1}, \ceps{j+2}\Thw[j]{j+2}, \dots,
\ceps{i+1}\Thw[j]{i+1}\right).
\end{split}
\]
Note that $\cepsp{k} : \Dn{k} \to \Dnt{i} \amalgd{j} \Dnt{j+1, i}$ is the
canonical morphism corresponding to the factor $\Dn{k}$ of $\Dnt{j+1, i}$.
In the sequel of this section, $(C, K_0)$ will denote the globular extension
$(C, K_0)$ under $\Thz$ endowed with these $\Thnt[j]{i}$'s, $\Thkt{i}$'s and
$\Thwt[j]{i}$'s.

Dually, we define maps
\[
\begin{split}
\compt_j^i & : \Xt{i} \fibrxt{j} \Xt{i} = \Xt{i} \fibrx{j} \Xt{j+1,i} \to
\Xt{i}, \quad i > j \ge 0,\\
\Glkt{i} & : \Xt{i} \to \Xt{i+1}, \quad i \ge 0,\\
\Glwt[j]{i} & : \Xt{i} \to \Xt{i}, \quad i > j \ge 0,\\
\end{split}
\]
by the formulas
\[
\xb \Glnt[j]{i} \yb =
\big(x_1, \dots, x_{j+1}, x_{j+2} \comp_j^{j+2} y_{j+2}, \dots, x_{i+1}
\comp_j^{i+1} y_{i+1}\big),
\]
where $(\xb, \yb)$ is in $\Xt{i} \fibrxt{j} \Xt{i}$,
and
\[
\begin{split}
\Glkt{i}(\xb) & = \big(x_1, \dots, x_{i+1},
\Glk{i+1}\Glk{i}\Gls{i+1}(x_{i+1})\big), \\
\Glwt[j]{i}(\xb) & =
\big(x_1, \dots, x_j, x_{j+1} \comp_j^{j+1} \Glt{j+2}(x_{j+2}),
\Glw[j]{j+2}(x_{j+2}), \dots, \Glw[j]{i+1}(x_{i+1})\big),
\end{split}
\]
where $\xb$ is in $\Xt{i}$.
\end{paragr}

\begin{prop}\label{prop:thnt}
The $\Thnt[j]{i}$'s are well-defined. Moreover, if $(C, F_0)$ satisfies
Axioms $\Assx$ and $\Excx$, then the $\Thnt[j]{i}$'s have the desired
globular source and target, i.e., they satisfy Condition
\emph{(\ref{item:sbg_n})} of the definition of a precategorical globular
extension (see Paragraph \ref{paragr:precat}).
\end{prop}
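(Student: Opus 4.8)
The plan is to prove both assertions ``using elements'', exactly in the spirit of Paragraph \ref{paragr:element}: I would fix an arbitrary globular presheaf $X$ on $C$ and verify the dual statements for the maps $\Glnt[j]{i}$, $\Glst{i}$ and $\Gltt{i}$, since an equality of morphisms of $C$ holds as soon as it holds after applying every such $X$. Throughout I would use freely the source-and-target formulas for the $\comp^i_j$'s (Condition~(1) of the definition of an \oo-precategory, which holds in $X$ because $(C, F_0)$ is precategorical) together with Lemma \ref{lemma:struct_xt}. For \emph{well-definedness}, I would check that for $(\xb, \yb)$ in $\Xt{i} \fibrxt{j} \Xt{i}$ the tuple
\[
\xb \Glnt[j]{i} \yb = \big(x_1, \dots, x_{j+1}, x_{j+2} \comp^{j+2}_j y_{j+2}, \dots, x_{i+1} \comp^{i+1}_j y_{i+1}\big)
\]
is genuinely an element of $\Xt{i}$, i.e. that $\Gls{k}(z_k) = \Glt{k}\Glt{k+1}(z_{k+1})$ for each component $z_k$ (each composite being formed on a $\comp^i_j$-composable pair, which itself follows from the defining condition of the twisted fibre product). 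For the indices $k \le j$ and $k \ge j+2$ this comes from $\xb, \yb \in \Xt{i}$ and the source-and-target formulas; the only interesting index is $k = j+1$, where one expands $\Glt{j+1}\Glt{j+2}(x_{j+2} \comp^{j+2}_j y_{j+2})$ using the target formula twice (first the branch $j < (j+2)-1$, then the branch $j = (j+1)-1$) to reduce it to $\Glt{j+1}\Glt{j+2}(x_{j+2})$, and concludes by $\xb \in \Xt{i}$. No associativity or interchange intervenes here, which is why well-definedness is unconditional; dually this yields the well-definedness of $\Thnt[j]{i}$.

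Next I would dispatch the ``target'' half of Condition~(1), namely the dual identities $\Gltt{i}(\xb \Glnt[i-1]{i} \yb) = \Gltt{i}(\xb)$ for $j = i-1$ and $\Gltt{i}(\xb \Glnt[j]{i} \yb) = \Gltt{i}(\xb) \Glnt[j]{i-1} \Gltt{i}(\yb)$ for $j < i-1$. Since $\Gltt{i}$ merely forgets the last component, both are pure bookkeeping: after discarding $z_{i+1}$ the remaining tuple $(z_1, \dots, z_i)$ is by definition $\Gltt{i}(\xb)$ when $j = i-1$, and $\Gltt{i}(\xb) \Glnt[j]{i-1} \Gltt{i}(\yb)$ when $j < i-1$. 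Again no extra axiom is used.

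The heart of the argument is the ``source'' half, where $\Assx$ and $\Excx$ enter, because $\Glst{i}$ collapses the last two components $z_i, z_{i+1}$ into $z_i \comp^i_{i-1}\Glt{i+1}(z_{i+1})$. For $j = i-1$ I must show $\Glst{i}(\xb \Glnt[i-1]{i} \yb) = \Glst{i}(\yb)$; expanding $\Glt{i+1}(x_{i+1} \comp^{i+1}_{i-1} y_{i+1})$ by the target formula and then applying $\Ass{i}{i-1}$ rewrites $x_i \comp^i_{i-1}\Glt{i+1}(x_{i+1} \comp^{i+1}_{i-1} y_{i+1})$ as $(x_i \comp^i_{i-1}\Glt{i+1}(x_{i+1})) \comp^i_{i-1}\Glt{i+1}(y_{i+1})$, and the composability relation $\Glst{i}(\xb) = \Gltt{i}(\yb)$ coming from $(\xb,\yb) \in \Xt{i} \fibrxt{i-1} \Xt{i}$ finishes it. For $j < i-1$ I must show $\Glst{i}(\xb \Glnt[j]{i}\yb) = \Glst{i}(\xb) \Glnt[j]{i-1} \Glst{i}(\yb)$; expanding the target of the $\comp^{i+1}_j$-composite, the last slot becomes $(x_i \comp^i_j y_i) \comp^i_{i-1}\big(\Glt{i+1}(x_{i+1}) \comp^i_j \Glt{i+1}(y_{i+1})\big)$, which is precisely the right-hand side of the interchange axiom $\Exc{i}{i-1}{j}$ (valid since $j < i-1$); applying it produces $(x_i \comp^i_{i-1}\Glt{i+1}(x_{i+1})) \comp^i_j (y_i \comp^i_{i-1}\Glt{i+1}(y_{i+1}))$, i.e. exactly the last component of $\Glst{i}(\xb) \Glnt[j]{i-1} \Glst{i}(\yb)$, the remaining components matching trivially.

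I expect the main obstacle to be precisely this last computation: one has to match the bookkeeping of the twisted composition to the exact shape of $\Exc{i}{i-1}{j}$, taking care that the inner compositions carry the index $j$ and the outer one the index $i-1$, so that the axiom is invoked with its two lower indices in the order $(i-1, j)$ rather than $(j, i-1)$. Keeping the several case-splits ($j = i-1$ versus $j < i-1$) straight in all the source-and-target formulas used along the way is the only real source of friction; once the right instance of interchange and of associativity is identified, each verification collapses to a one-line rewrite.
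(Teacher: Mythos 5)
Your proposal is correct and follows essentially the same route as the paper: fix a globular presheaf $X$, reduce to the dual elementwise statements via Paragraph \ref{paragr:element}, check membership in $\Xt{i}$ unconditionally from the source-and-target formulas, and then verify the source half of Condition (\ref{item:sbg_n}) using exactly $\Ass{i}{i-1}$ in the case $j=i-1$ and $\Exc{i}{i-1}{j}$ in the case $j<i-1$, the target half being bookkeeping. The instances of the axioms you identify, including the order $(i-1,j)$ of the lower indices in the interchange, match the paper's computation.
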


\begin{proof}
Recall that we have fixed a globular presheaf $X$ on $C$.
Let $i > j \ge 0$.
By Paragraph
\ref{paragr:element}, showing that $\Thn[j]{i}$ is well-defined is
equivalent to showing that for every $(\xb, \yb)$ in $\Xt{i} \fibrxt{j}
\Xt{i}$, the element $\xb \Glnt[j]{i} \yb$ belongs to
$\Xt{i}$.

Let us show this. Let $(\xb, \yb)$ be in $\Xt{i} \fibrxt{j} \Xt{i}$.
We need to check that
\[ \Gls{j+1}(x_{j+1}) = \Glt{j+1}\Glt{j+2}(x_{j+2} \comp_j^{j+2} y_{j+2}), \]
and
\[
\Gls{l}(x_l \comp_j^l y_l) = \Glt{l}\Glt{l+1}
(x_{l+1} \comp_j^{l+1} y_{l+1}),
\quad j + 2 \le l \le i.
\]
But
\[
\begin{split}
\Glt{j+1}\Glt{j+2}(x_{j+2} \comp_j^{j+2} y_{j+2})
& =
\Glt{j+1}\big(\Glt{j+2}(x_{j+2}) \comp_j^{j+1} \Glt{j+2}(y_{j+2})\big) \\
& =
\Glt{j+1}\Glt{j+2}(x_{j+2}) \\
& =
\Gls{j+1}(x_{j+1}),
\end{split}
\]
and
\[
\begin{split}
\Gls{l}(x_l \comp_j^l y_l)
& =
\Gls{l}(x_l) \comp_j^{l-1} \Gls{l}(y_l) \\
& =
\Glt{l}\Glt{l+1}(x_{l+1}) \comp_j^{l-1} \Glt{l}\Glt{l+1}(y_{l+1}) \\
& =
\Glt{l}\Glt{l+1}(x_{l+1} \comp_j^{l+1} y_{l+1}).
\end{split}
\]

Again by Paragraph \ref{paragr:element}, proving that $\Thn[j]{i}$ has the desired
source and target is equivalent to proving the analogous result for $\xb
\Glnt[j]{i} \yb$.

Let us prove this.  If $j = i - 1$, we have
\begin{align*}
\Glst{i}\big(\xb \Glnt[i-1]{i} \yb\big)
& =
\Glst{i}\big(x_1, \dots, x_i, x_{i+1} \comp^{i+1}_{i-1} y_{i+1}\big) \\
& =
\big(x_1, \dots, x_{i-1}, x_i \comp^i_{i-1} \Glt{i}(x_{i+1}
\comp^{i+1}_{i-1} y_{i+1})\big) \\
& =
\big(x_1, \dots, x_{i-1}, x_i \comp^i_{i-1} \big(\Glt{i}(x_{i+1})
\comp^i_{i-1}\Glt{i}(y_{i+1})\big)\big)\\
& =
\big(x_1, \dots, x_{i-1}, \big(x_i \comp^i_{i-1} \Glt{i}(x_{i+1})\big)
\comp^i_{i-1}\Glt{i}(y_{i+1})\big)\\*
& \phantom{=1} \text{(by Axiom $\Ass{i}{i-1}$)} \\
& =
\big(y_1, \dots, y_{i-1}, y_i \comp^i_{i-1}\Glt{i}(y_{i+1})\big) \\*
& \phantom{=1}\text{(by Equations (\ref{eq:fibr_prod}) of Paragraph
\ref{paragr:iso_can})} \\*
& =
\Glst{i}(\yb),
\end{align*}
and
\[
\begin{split}
\Gltt{i}\big(\xb \Glnt[i-1]{i} \yb\big)
& =
\Gltt{i}\big(x_1, \dots, x_i, x_{i+1} \comp^{i+1}_{i-1} y_{i+1}\big) \\
& =
\big(x_1, \dots, x_i\big) \\
& =
\Gltt{i}(\xb).
\end{split}
\]
If $j < i - 1$, we have
\begin{align*}
\Glst{i}\big(\xb \Glnt[j]{i} \yb\big)
& =
\Glst{i}\big(x_1, \dots, x_{j+1}, x_{j+2} \comp^{j+2}_j y_{j+2}, \dots,
x_{i+1} \comp_j^{i+1} y_{i+1}\big) \\
& =
\big(x_1, \dots, x_{j+1}, x_{j+2} \comp^{j+2}_j y_{j+2}, \dots,
x_{i-1} \comp_j^{i-1} y_{i-1}, \\*
& \qquad\quad
(x_i \comp^i_j y_i) \comp^i_{i-1}
\Glt{i+1}(x_{i+1} \comp^{i+1}_j y_{i+1})\big) \\
& =
\big(x_1, \dots, x_{j+1}, x_{j+2} \comp^{j+2}_j y_{j+2}, \dots,
x_{i-1} \comp_j^{i-1} y_{i-1}, \\*
& \qquad\quad
\big(x_i \comp^i_j y_i\big) \comp^i_{i-1}
\big(\Glt{i+1}(x_{i+1}) \comp^i_j \Glt{i+1}(y_{i+1})\big)\big) \\
& =
\big(x_1, \dots, x_{j+1}, x_{j+2} \comp^{j+2}_j y_{j+2}, \dots,
x_{i-1} \comp_j^{i-1} y_{i-1}, \\*
& \qquad\quad
\big(x_i \comp^i_{i-1} \Glt{i+1}(x_{i+1})\big) \comp^i_j
\big(y_i \comp^i_{i-1} \Glt{i+1}(y_{i+1})\big)\big) \\*
& \phantom{=1}\text{(by Axiom $\Exc{i}{i-1}{j}$)} \\
& =
\big(x_1, \dots, x_{i-1}, x_i \comp^i_{i-1} \Glt{i+1}(x_{i+1})\big)
\Glnt[j]{i} \\*
& \qquad\quad
\big(y_1, \dots, y_{i-1}, y_i \comp^i_{i-1} \Glt{i+1}(y_{i+1})\big) \\*
& =
\Glst{i}(\xb) \Glnt[j]{i} \Glst{i}(\yb),
\end{align*}
and
\[
\begin{split}
\Gltt{i}\big(\xb \Glnt[j]{i} \yb\big)
& =
\Gltt{i}\big(x_1, \dots, x_{j+1}, x_{j+2} \comp^{j+2}_j y_{j+2}, \dots,
x_{i+1} \comp_j^{i+1} y_{i+1}\big) \\
& =
\big(x_1, \dots, x_{j+1}, x_{j+2} \comp^{j+2}_j y_{j+2}, \dots,
x_i \comp_j^i y_i\big) \\
& =
(x_1, \dots, x_i) \Glnt[j]{i-1} (y_1, \dots, y_i) \\
& =
\Gltt{i}(\xb) \Glnt[j]{i-1} \Gltt{i}(\yb),
\end{split}
\]
hence the result.
\end{proof}

\begin{prop}\label{prop:ass}
If $(C, F_0)$ satisfies Axiom $\Assx$, then so does $(C, K_0)$.
\end{prop}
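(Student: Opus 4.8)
The plan is to establish the commutativity of the associativity square for $(C, K_0)$ by the ``using elements'' technique of Paragraph \ref{paragr:element}. Rather than comparing the two morphisms $\Dnt{i} \to \Dnt{i} \amalgdt{j} \Dnt{i} \amalgdt{j} \Dnt{i}$ obtained from the square directly, I would fix the globular presheaf $X$ and show that the dual operation $\compt_j^i$ is associative on $\Xt{i}$; that is, for every triple $(\xb, \yb, \zb)$ in the iterated globular product $\Xt{i} \fibrxt{j} \Xt{i} \fibrxt{j} \Xt{i}$ I must prove
\[
(\xb \Glnt[j]{i} \yb) \Glnt[j]{i} \zb = \xb \Glnt[j]{i} (\yb \Glnt[j]{i} \zb).
\]
Here $\compt_j^i$ is well-defined by Proposition \ref{prop:thnt}, and the compatibility conditions ensuring that the iterated product and the nested compositions make sense are exactly those analysed there.

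Next I would expand both sides using the explicit formula for $\compt_j^i$ from Paragraph \ref{paragr:def_thn}. Since $\xb \Glnt[j]{i} \yb$ leaves the first $j+1$ components untouched, the components $1, \dots, j+1$ of both sides are $x_1, \dots, x_{j+1}$ and match trivially. For each index $l$ with $j+2 \le l \le i+1$, the $l$-th component of the left-hand side is $(x_l \comp_j^l y_l) \comp_j^l z_l$, whereas that of the right-hand side is $x_l \comp_j^l (y_l \comp_j^l z_l)$. Thus the identity reduces, component by component, to the associativity of $\comp_j^l$ in $(C, F_0)$.

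The concluding step is to invoke Axiom $\Ass{l}{j}$ of $(C, F_0)$, which is available for each $l$ in the range $j+2 \le l \le i+1$ since then $l > j \ge 0$, to see that the two top-dimensional components agree and hence that the two sides coincide. The argument is uniform in $j$: in the extreme case $j = i-1$ only the single component $l = i+1$ is composed, and everything specializes to $\Ass{i+1}{i-1}$. The computation itself is routine and uses \emph{only} Axiom $\Assx$; the sole point requiring care is the bookkeeping of the iterated product $\Xt{i} \fibrxt{j} \Xt{i} \fibrxt{j} \Xt{i}$ and the verification that the fiber-product compatibilities propagate, so that $(\xb \Glnt[j]{i} \yb, \zb)$ and $(\xb, \yb \Glnt[j]{i} \zb)$ are again compatible pairs. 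As this is precisely what Proposition \ref{prop:thnt} secures, there is no genuine obstacle once the element description is in place beyond keeping the indices straight.
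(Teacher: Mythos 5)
Your proposal is correct and follows essentially the same route as the paper: fix a globular presheaf $X$, expand both sides of the associativity identity for $\Glnto[j]{i}$ componentwise, and apply Axiom $\Ass{l}{j}$ of $(C,F_0)$ for each $l$ with $j+2 \le l \le i+1$. The compatibility bookkeeping you flag is indeed handled by the source/target computations of Proposition \ref{prop:thnt}, exactly as you indicate.
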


\begin{proof}
Let $i > j \ge 0$ and let $(\xb, \yb, \zb)$ be in 
$\Xt{i} \fibrxt{j} \Xt{i} \fibrxt{j} \Xt{i}$. We have
\begin{align*}
\big(\xb \Glnt[j]{i} \yb\big) \Glnt[j]{i} \zb
& =
\big(x_1, \dots, x_{j+1}, x_{j+2} \comp_j^{j+2} y_{j+2}, \dots, x_{i+1}
\comp_j^{i+1} y_{i+1}\big) \Glnt[j]{i} \zb \\
& =
\big(x_1, \dots, x_{j+1}, \\*
& \qquad\quad
(x_{j+2} \comp_j^{j+2} y_{j+2}) \comp_j^{j+2}
z_{j+2}, \dots, (x_{i+1} \comp_j^{i+1} y_{i+1}) \comp_j^{i+1} z_{i+1}\big)
\\
& =
\big(x_1, \dots, x_{j+1}, \\*
& \qquad\quad
x_{j+2} \comp_j^{j+2} (y_{j+2} \comp_j^{j+2}
z_{j+2}), \dots, x_{i+1} \comp_j^{i+1} (y_{i+1} \comp_j^{i+1} z_{i+1})\big)
\\*
& \phantom{=1} \text{(by Axiom $\Ass{l}{j}$ for $j + 2 \le l \le i + 1$)} \\
& =
\xb \Glnt[j]{i} \big(y_1, \dots, y_{j+1}, y_{j+2} \comp_j^{j+2} z_{j+2}, \dots, y_{i+1}
\comp_j^{i+1} z_{i+1}\big) \\*
& =
\xb \Glnt[j]{i} \big(\yb \Glnt[j]{i} \zb\big).
\end{align*}
\end{proof}

\begin{prop}\label{prop:exc}
If $(C, F_0)$ satisfies Axiom $\Excx$, then so does $(C, K_0)$.
\end{prop}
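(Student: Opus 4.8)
The plan is to follow verbatim the strategy of Proposition \ref{prop:ass}. By the ``using elements'' principle of Paragraph \ref{paragr:element}, proving Axiom $\Excx$ for $(C, K_0)$ amounts to proving the dual statement for the maps $\Glnt[j]{i}$ on the globular products $\Xt{i}$: for every $i > j > k \ge 0$ and every quadruple $(\xb, \xb', \yb, \yb')$ in $\Xt{i} \fibrxt{j} \Xt{i} \fibrxt{k} \Xt{i} \fibrxt{j} \Xt{i}$, one must check
\[ (\xb \Glnt[j]{i} \xb') \Glnt[k]{i} (\yb \Glnt[j]{i} \yb') = (\xb \Glnt[k]{i} \yb) \Glnt[j]{i} (\xb' \Glnt[k]{i} \yb'). \]
First I would expand both sides componentwise using the defining formula for $\Glnt[j]{i}$ from Paragraph \ref{paragr:def_thn}, recalling that $\Glnt[j]{i}$ modifies only the components of index $l$ with $j+2 \le l \le i+1$, composing them via $\comp^l_j$, and leaves the first $j+1$ components untouched.

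Then I would compare the two sides in the three relevant index ranges. For $1 \le l \le k+1$ neither composition acts, so both sides return $x_l$. For $k+2 \le l \le j+1$ only the $\comp^l_k$-composition is active: on the left one first forms $\xb \Glnt[j]{i} \xb'$ and $\yb \Glnt[j]{i} \yb'$, which leave these components as $x_l$ and $y_l$, and then composes to obtain $x_l \comp^l_k y_l$; on the right one gets $x_l \comp^l_k y_l$ directly and the outer $\Glnt[j]{i}$ does not touch it. Hence these components agree automatically. The only nontrivial range is $j+2 \le l \le i+1$, where both compositions are active: the left side yields $(x_l \comp^l_j x'_l) \comp^l_k (y_l \comp^l_j y'_l)$ while the right side yields $(x_l \comp^l_k y_l) \comp^l_j (x'_l \comp^l_k y'_l)$.

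At this point the two components coincide precisely by Axiom $\Exc{l}{j}{k}$ of $(C, F_0)$, applied to the quadruple $(x_l, x'_l, y_l, y'_l)$. I expect the main obstacle to be purely organizational: carefully tracking which components lie below index $k+1$, strictly between $k+1$ and $j+2$, and at or above $j+2$, and verifying that the fiber-product compatibility conditions are exactly what is needed for the various $\comp^l_k$ and $\comp^l_j$ to be defined (as was done in the proof of Proposition \ref{prop:thnt}). Once this bookkeeping is in place, the entire identity reduces, component by component, to the single exchange axiom of $(C, F_0)$, with no further input required.
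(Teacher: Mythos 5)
Your proposal is correct and follows exactly the paper's own argument: the proof of Proposition \ref{prop:exc} is a componentwise computation in which the components of index $l \le k+1$ are untouched, those with $k+2 \le l \le j+1$ acquire only the $\comp^l_k$-composition on both sides, and those with $j+2 \le l \le i+1$ reduce to Axiom $\Exc{l}{j}{k}$ of $(C, F_0)$. The well-definedness of the various compositions is indeed the content of Proposition \ref{prop:thnt}, as you note.
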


\begin{proof}
Let $i > j > k \ge 0$ and let $(\xb, \yb, \zb, \tb)$ be in
\[ \Xt{i} \fibrxt{j} \Xt{i} \fibrxt{j} \Xt{i} \fibrxt{j} \Xt{i}.\]
We have
\begin{align*}
\MoveEqLeft \big(\xb \Glnt[j]{i} \yb) \Glnt[k]{i} \big(\zb \Glnt[j]{i}
\tb\big) \\*
& =
\big(x_1, \dots, x_{j+1}, x_{j+2} \comp_j^{j+2} y_{j+2}, \dots, x_{i+1}
\comp_j^{i+1} y_{i+1}\big) \Glnt[k]{i} \\*
& \qquad\quad
\big(z_1, \dots, z_{j+1}, z_{j+2} \comp_j^{j+2} t_{j+2}, \dots, z_{i+1}
\comp_j^{i+1} t_{i+1}\big) \\
& =
(x_1, \dots, x_{k+1}, x_{k+2} \comp_k^{k+2} z_{k+2}, \dots, x_{j+1}
\comp_k^{j+1} z_{j+1}, \\*
& \qquad\quad
\big(x_{j+2} \comp_j^{j+2} y_{j+2}\big) \comp_k^{j+2} \big(z_{j+2}
\comp_j^{j+2} t_{j+2}\big), \dots, \\*
& \qquad\quad
\big(x_{i+1} \comp_j^{i+1} y_{i+1}\big) \comp_k^{i+1} \big(z_{i+1}
\comp_j^{i+1} t_{i+1}\big)\big) \\
& =
(x_1, \dots, x_{k+1}, x_{k+2} \comp_k^{k+2} z_{l+2}, \dots, x_{j+1}
\comp_k^{j+1} z_{j+1}, \\*
& \qquad\quad
\big(x_{j+2} \comp_k^{j+2} z_{j+2}\big) \comp_j^{j+2} \big(y_{j+2}
\comp_k^{j+2} t_{j+2}\big), \dots, \\*
& \qquad\quad
\big(x_{i+1} \comp_k^{i+1} z_{i+1}\big) \comp_j^{i+1} \big(y_{i+1}
\comp_k^{i+1} t_{i+1}\big)\big) \\*
& \phantom{=1}\text{(by Axiom $\Exc{l}{j}{k}$ for $l$ such that $j+2 \le l \le i+1$)}\\
& =
\big(x_1, \dots, x_{k+1}, x_{k+2} \comp_k^{k+2} z_{k+2}, \dots, x_{i+1}
\comp_k^{i+1} z_{i+1}\big) \Glnt[j]{i} \\*
& \qquad\quad
\big(y_1, \dots, y_{k+1}, y_{k+2} \comp_k^{k+2} t_{k+2}, \dots, y_{i+1}
\comp_k^{i+1} t_{i+1}\big) \\*
& =
\big(\xb \Glnt[k]{i} \zb\big) \Glnt[j]{i} \big(\yb \Glnt[k]{i} \tb\big).
\end{align*}
\end{proof}

\begin{prop}\label{prop:thkt}
The $\Thkt{i}$'s are well-defined. Moreover, if $(C, F_0)$ satisfies
Axiom $\Runx$, then the $\Thkt{i}$'s have the desired globular source and
target, i.e., they
satisfy Condition \emph{(\ref{item:sbg_k})} of the definition of a precategorical
globular extension (see Paragraph \ref{paragr:precat}).
\end{prop}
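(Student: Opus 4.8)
My plan is to verify both assertions ``using elements'' as in Paragraph~\ref{paragr:element}: since $X$ is a fixed globular presheaf on $C$ and every representable presheaf on $C$ is globular, it is enough to argue with the dual maps $\Glkt{i}$, $\Glst{i+1}$ and $\Gltt{i+1}$ on the sets $\Xt{i}$. Under this translation, well-definedness of $\Thkt{i}$ becomes the statement that for every $\xb$ in $\Xt{i}$ the tuple $\Glkt{i}(\xb) = (x_1, \dots, x_{i+1}, \Glk{i+1}\Glk{i}\Gls{i+1}(x_{i+1}))$ genuinely lies in $\Xt{i+1}$, and Condition~(\ref{item:sbg_k}) of Paragraph~\ref{paragr:precat} for the twisted extension $(C, K_0)$ becomes the pair of identities $\Glst{i+1}\Glkt{i} = \id{\Xt{i}}$ and $\Gltt{i+1}\Glkt{i} = \id{\Xt{i}}$.

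For well-definedness the only nontrivial condition is the compatibility of the last two components, namely $\Gls{i+1}(x_{i+1}) = \Glt{i+1}\Glt{i+2}(\Glk{i+1}\Glk{i}\Gls{i+1}(x_{i+1}))$. I would establish this by applying twice the unit identities $\Glt{i+2}\Glk{i+1} = \id{X_{i+1}}$ and $\Glt{i+1}\Glk{i} = \id{X_i}$, which hold for any globular presheaf since they are the dual of Condition~(\ref{item:sbg_k}) for $(C, F_0)$; successively they reduce the right-hand side to $\Glt{i+1}\Glk{i}\Gls{i+1}(x_{i+1}) = \Gls{i+1}(x_{i+1})$. Note that this step requires no additional axiom.

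Turning to the source and target conditions, the identity $\Gltt{i+1}\Glkt{i} = \id{\Xt{i}}$ is immediate, since $\Gltt{i+1}$ forgets exactly the component appended by $\Glkt{i}$, giving $\Gltt{i+1}\Glkt{i}(\xb) = (x_1, \dots, x_{i+1}) = \xb$. The identity $\Glst{i+1}\Glkt{i} = \id{\Xt{i}}$ is the heart of the proposition and the only place where Axiom~$\Runx$ is used. Here I would compute $\Glst{i+1}\Glkt{i}(\xb) = (x_1, \dots, x_i, x_{i+1} \comp^{i+1}_i \Glt{i+2}(\Glk{i+1}\Glk{i}\Gls{i+1}(x_{i+1})))$; using $\Glt{i+2}\Glk{i+1} = \id{X_{i+1}}$ the last entry simplifies to $x_{i+1} \comp^{i+1}_i \Glk{i}\Gls{i+1}(x_{i+1})$. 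Rewriting $\Glk{i}\Gls{i+1} = \Glk[i+1]{i}\Gls[i]{i+1}$, Axiom~$\Run{i+1}{i}$ yields precisely $x_{i+1} \comp^{i+1}_i \Glk[i+1]{i}\Gls[i]{i+1}(x_{i+1}) = x_{i+1}$, so the whole tuple is unchanged and $\Glst{i+1}\Glkt{i}(\xb) = \xb$.

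I do not anticipate a serious obstacle: the argument is short and formally parallel to the well-definedness check for $\Thst{i}$ already carried out in Paragraph~\ref{paragr:element}. The only point deserving care is the index bookkeeping that matches the appended component, after applying the target map $\Glt{i+2}$, with the expression $\Glk[i+1]{i}\Gls[i]{i+1}(x_{i+1})$ demanded by the right-unit axiom; once this identification is in place, a single invocation of $\Runx$ finishes the proof.
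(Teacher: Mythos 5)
Your proof is correct and follows essentially the same route as the paper: the same reduction to elements, the same use of the unit identities $\Glt{l+1}\Glk{l} = \id{X_l}$ for well-definedness and for the target condition, and the same single invocation of Axiom $\Run{i+1}{i}$ (after identifying $\Glk{i}\Gls{i+1}$ with $\Glk[i+1]{i}\Gls[i]{i+1}$) for the source condition. No gaps.
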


\begin{proof}
Let $i \ge 0$ and let $\xb$ be in $\Xt{i}$. Let us first prove that $\Glkt{i}(\xb)$
belongs to $\Xt{i+1}$. We need to show that
\[ \Gls{i+1}(x_{i+1}) =
\Glt{i+1}\Glt{i+2}\Glk{i+1}\Glk{i}\Gls{i+1}(x_{i+1}),
\]
but this identity holds since $\Glt{l+1}\Glk{l} = \id{X_l}$ for every $l \ge
0$.

Let us now prove that $\Glkt{i}(\xb)$ has the desired globular source and target. We
have
\[
\begin{split}
\Glst{i+1}\Glkt{i}(\xb) 
& =
\Glst{i+1}\big(x_1, \dots, x_{i+1}, \Glk{i+1}\Glk{i}\Gls{i+1}(x_{i+1})\big) \\
& =
\big(x_1, \dots, x_i, x_{i+1} \comp^{i+1}_i
\Glt{i+2}\Glk{i+1}\Glk{i}\Gls{i+1}(x_{i+1})\big) \\
& =
\big(x_1, \dots, x_i, x_{i+1} \comp^{i+1}_i
\Glk{i}\Gls{i+1}(x_{i+1})\big) \\
& =
\big(x_1, \dots, x_{i+1}\big) \\*
& \phantom{=1} \text{(by Axiom $\Run{i+1}{i}$)} \\
& =
\xb,
\end{split}
\]
and
\[
\begin{split}
\Gltt{i+1}\Glkt{i}(\xb) 
& =
\Gltt{i+1}\big(x_1, \dots, x_{i+1}, \Glk{i+1}\Glk{i}\Gls{i+1}(x_{i+1})\big) \\
& =
\big(x_1, \dots, x_{i+1}\big) \\
& =
\xb,
\end{split}
\]
hence the result.
\end{proof}

\begin{prop}\label{prop:lrun}
If $(C, F_0)$ satisfies Axioms $\Lunx$ and $\Runx$, then so does $(C, K_0)$.
\end{prop}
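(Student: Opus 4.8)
The plan is to verify both axioms ``using elements'' in the sense of Paragraph \ref{paragr:element}, i.e.\ to establish the dual identities for the operations $\Glnt[j]{i}$, $\Gltt[j]{i}$, $\Glst[j]{i}$ and $\Glkt[i]{j}$ on the fixed globular presheaf $X$. Writing $\Glkt[i]{j} = \Glkt{i-1}\cdots\Glkt{j}$ and $\Glst[j]{i} = \Glst{j+1}\cdots\Glst{i}$ for the iterated twisted unit and source, I must show, for every $\xb$ in $\Xt{i}$, the left unit identity $\Glkt[i]{j}\Gltt[j]{i}(\xb) \Glnt[j]{i} \xb = \xb$ and the right unit identity $\xb \Glnt[j]{i} \Glkt[i]{j}\Glst[j]{i}(\xb) = \xb$. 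The two fibre products involved are well-defined: since $\Glst{i+1}\Glkt{i} = \id{}$ and $\Gltt{i+1}\Glkt{i} = \id{}$ (Proposition \ref{prop:thkt}, using Axiom $\Runx$), iterating gives $\Glst[j]{i}\Glkt[i]{j} = \id{}$ and $\Gltt[j]{i}\Glkt[i]{j} = \id{}$, so the source of the left factor agrees with the target of the right factor in both cases.

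First I would record closed formulas for the iterated operations. As $\Gltt{i}$ merely drops the last component, $\Gltt[j]{i}(\xb) = (x_1, \dots, x_{j+1})$; and using $\Glst[k]{i} = \Glst{k+1}\Gltt[k+1]{i}$ from Paragraph \ref{paragr:iso_can}, one gets $\Glst[j]{i}(\xb) = (x_1, \dots, x_j, x_{j+1}\comp_j^{j+1}\Glt{j+2}(x_{j+2}))$. The key computation is the iterated unit: by induction on the number of factors, using $\Gls{l+1}\Glk{l} = \id{}$ at each step, one shows that for $\zb = (z_1, \dots, z_{j+1})$ in $\Xt{j}$,
\[ \Glkt[i]{j}(\zb) = \big(z_1, \dots, z_{j+1}, \Glk[j+2]{j}(a), \dots, \Glk[i+1]{j}(a)\big),\qquad a = \Gls{j+1}(z_{j+1}). \]

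For the left unit, applying this to $\zb = \Gltt[j]{i}(\xb)$ gives $a = \Gls{j+1}(x_{j+1})$, and the twisted composite $\Glkt[i]{j}\Gltt[j]{i}(\xb)\Glnt[j]{i}\xb$ has $l$-th component equal to $x_l$ for $l \le j+1$ and to $\Glk[l]{j}(a)\comp_j^l x_l$ for $j+2 \le l \le i+1$. By Lemma \ref{lemma:struct_xt} together with the globular relations one has $a = \Glt[j]{l}(x_l)$ for each such $l$, whence this component equals $\Glk[l]{j}\Glt[j]{l}(x_l)\comp_j^l x_l = x_l$ by Axiom $\Lun{l}{j}$ for $(C, F_0)$. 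For the right unit, the same formula applied to $\zb = \Glst[j]{i}(\xb)$ yields $a = \Gls{j+1}(z_{j+1}) = \Gls[j]{i+1}(x_{i+1})$ (using $\Gls{j+1}(u\comp_j^{j+1}v) = \Gls{j+1}(v)$, the relation $\Gls{j+1}\Glt{j+2} = \Gls{j+1}\Gls{j+2}$, and Lemma \ref{lemma:struct_xt}); then the $l$-th component of $\xb \Glnt[j]{i}\Glkt[i]{j}\Glst[j]{i}(\xb)$ is $x_l$ for $l \le j+1$ and $x_l \comp_j^l \Glk[l]{j}(a)$ for $l \ge j+2$, and since $a = \Gls[j]{l}(x_l)$ again by Lemma \ref{lemma:struct_xt}, this equals $x_l$ by Axiom $\Run{l}{j}$.

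The main obstacle is bookkeeping rather than conceptual: it lies in establishing the closed form for $\Glkt[i]{j}$ and, above all, in correctly identifying in each slot $l$ which iterated source or target of $\xb$ the unit-component $\Glk[l]{j}(a)$ represents, so that exactly the right instance $\Lun{l}{j}$ or $\Run{l}{j}$ of the original unit axiom can be invoked. Each of these identifications reduces to the two identities of Lemma \ref{lemma:struct_xt} combined with the globular and coglobular relations, so once the closed forms are in hand the verification proceeds componentwise.
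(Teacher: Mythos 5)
Your proposal is correct and follows essentially the same route as the paper: establish by induction the closed formula for $\Glkt[i]{j}$, compute $\Glkt[i]{j}\Glst[j]{i}(\xb)$ and $\Glkt[i]{j}\Gltt[j]{i}(\xb)$ via Lemma \ref{lemma:struct_xt}, and then verify the two unit identities componentwise by invoking Axioms $\Run{l}{j}$ and $\Lun{l}{j}$ of $(C,F_0)$ for $j+2 \le l \le i+1$. The only (harmless) addition is your explicit check that the relevant pairs are composable, which the paper leaves implicit.
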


\begin{proof}
Let $j \ge 0$ and let $\yb$ be in $\Xt{j}$. Let us first prove by induction on $i >
j$ 
that
\[
\Glkt[i]{j}(\yb) = \big(y_1, \dots, y_{j+1},
\Glk[j+2]{j}\Gls{j+1}(y_{j+1}), \dots, \Glk[i+1]{j}\Gls{j+1}(y_{j+1})\big).
\]
For $i = j + 1$, this identity holds by definition of $\Glkt{j}$. Assume 
the result holds for an $i > j$. Then we have
\[
\begin{split}
\Glkt[i+1]{j}(\yb) 
& =
\Glkt{i}\Glkt[i]{j}(\yb) \\
& =
\Glkt{i}
\big(y_1, \dots, y_{j+1},
\Glk[j+2]{j}\Gls{j+1}(y_{j+1}), \dots, \Glk[i+1]{j}\Gls{j+1}(y_{j+1})\big) \\
& =
\big(y_1, \dots, y_{j+1},
\Glk[j+2]{j}\Gls{j+1}(y_{j+1}), \dots, \Glk[i+1]{j}\Gls{j+1}(y_{j+1}), \\
& \qquad\quad
\Glk{i+1}\Glk{i}\Gls{i+1}\Glk[i+1]{j}\Gls{j+1}(y_{j+1})
\big),
\end{split}
\]
but
\[
\begin{split}
\Glk{i+1}\Glk{i}\Gls{i+1}\Glk[i+1]{j}\Gls{j+1}(y_{j+1})
& =
\Glk{i+1}\Glk{i}\Gls{i+1}\Glk{i}\Glk[i]{j}\Gls{j+1}(y_{j+1}) \\
& =
\Glk{i+1}\Glk{i}\Glk[i]{j}\Gls{j+1}(y_{j+1}), \\
& =
\Glk[i+2]{j}\Gls{j+1}(y_{j+1}),
\end{split}
\]
hence the formula.

Let now $i > j$ and let $\xb$ be in $\Xt{i}$. We have 
\[
\begin{split}
\Glkt[i]{j}\Glst[j]{i}(\xb)
& = \Glkt[i]{j}\Glst{j+1}\Gltt[j+1]{i}(\xb) \\
& =
\Glkt[i]{j}\Glst{j+1}\big(x_1, \dots, x_{j+2}\big) \\
& =
\Glkt[i]{j}\big(x_1, \dots, x_j, x_{j+1} \comp_j^{j+1} \Glt{j+2}(x_{j+2})\big) \\
& =
\big(x_1, \dots, x_j, x_{j+1} \comp_j^{j+1} \Glt{j+2}(x_{j+2}), \\
& \qquad\quad
\Glk[j+2]{j}\Gls{j+1}\big(x_{j+1} \comp_j^{j+1} \Glt{j+2}(x_{j+2})\big),
\dots, \\
& \qquad\quad
\Glk[i+1]{j}\Gls{j+1}\big(x_{j+1} \comp_j^{j+1}
\Glt{j+2}(x_{j+2})\big)\big).
\end{split}
\]
But for $l$ such that $j+2 \le l \le i + 1$, we have
\[
\begin{split}
\Glk[l]{j}\Gls{j+1}\big(x_{j+1} \comp_j^{j+1}
\Glt{j+2}(x_{j+2})\big) 
& =
\Glk[l]{j}\Gls{j+1}\Glt{j+2}(x_{j+2}) \\
& =
\Glk[l]{j}\Gls[j]{j+2}(x_{j+2}) \\
& =
\Glk[l]{j}\Gls[j]{l}(x_l),
\end{split}
\]
where the last equality comes from Lemma \ref{lemma:struct_xt}. Hence the identity
\begin{equation}\label{eq:ks}
\tag{$\ast_{ks}$}
\begin{split}
\Glkt[i]{j}\Glst[j]{i}(\xb)
& =
\big(x_1, \dots, x_j, x_{j+1} \comp_j^{j+1} \Glt{j+2}(x_{j+2}), \\
& \qquad\quad
\Glk[j+2]{j}\Gls[j]{j+2}(x_{j+2}), \dots,
\Glk[i+1]{j}\Gls[j]{i+1}(x_{i+1})\big).
\end{split}
\end{equation}

Let us now compute $\Glkt[i]{j}\Gltt[j]{i}(\xb)$. We have
\[
\begin{split}
\Glkt[i]{j}\Gltt[j]{i}(\xb)
& =
\Glkt[i]{j}\big(x_1, \dots, x_{j+1}\big) \\
& =
\big(x_1, \dots, x_{j+1}, \Glk[j+2]{j}\Gls{j+1}(x_{j+1}), \dots,
\Glk[i+1]{j}\Gls{j+1}(x_{j+1})\big).
\end{split}
\]
But by Lemma \ref{lemma:struct_xt}, we have
\[ \Gls{j+1}(x_{j+1}) = \Glt[j]{l}(x_l), \quad j+2 \le l \le i+1, \]
and so we obtain the formula
\begin{equation}\label{eq:kt}
\tag{$\ast_{kt}$}
\Glkt[i]{j}\Gltt[j]{i}(\xb) =
\big(x_1, \dots, x_{j+1}, \Glk[j+2]{j}\Glt[j]{j+2}(x_{j+2}), \dots,
\Glk[i+1]{j}\Glt[j]{i+1}(x_{i+1})\big).
\end{equation}

We can now prove the proposition. We have
\[
\begin{split}
\xb \Glnt[j]{i} \Glkt[i]{j}\Glst[j]{i}(\xb)
& =
\xb \Glnt[j]{i}
\big(x_1, \dots, x_j, x_{j+1} \comp_j^{j+1} \Glt{j+2}(x_{j+2}), \\
& \qquad\qquad\quad
\Glk[j+2]{j}\Gls[j]{j+2}(x_{j+2}), \dots,
\Glk[i+1]{j}\Gls[j]{i+1}(x_{i+1})\big) \\
& =
\big(x_1, \dots, x_{j+1}, \\
& \qquad\quad
x_{j+2} \comp_j^{j+2} \Glk[j+2]{j}\Gls[j]{j+2}(x_{j+2}), \dots,
x_{i+1} \comp_j^{i+1} \Glk[i+1]{j}\Gls[j]{i+1}(x_{i+1})\big) \\
& =
\big(x_1, \dots, x_{i+1}\big) \\*
& \phantom{=1}\text{(by Axioms $\Run{l}{j}$ for $l$ such that $j+2 \le l \le i+1$)} \\
& =
\xb,
\end{split}
\]
and
\[
\begin{split}
\Glkt[i]{j}\Gltt[j]{i}(\xb) \Glnt[j]{i} \xb
& =
\big(x_1, \dots, x_{j+1}, \Glk[j+2]{j}\Glt[j]{j+2}(x_{j+2}), \dots,
\Glk[i+1]{j}\Glt[j]{i+1}(x_{i+1})\big)
\Glnt[j]{i} \xb \\
& =
\big(x_1, \dots, x_{j+1},  \\
& \qquad\quad
\Glk[j+2]{j}\Glt[j]{j+2}(x_{j+2}) \comp_j^{j+2} x_{j+2}, \dots,
\Glk[i+1]{j}\Glt[j]{i+1}(x_{i+1}) \comp_j^{i+1} x_{i+1}\big) \\
& =
\big(x_1, \dots, x_{i+1}\big) \\*
& \phantom{=1}\text{(by Axioms $\Lun{l}{j}$ for $l$ such that $j+2 \le l \le
i+1$)} \\
& =
\xb.
\end{split}
\]
\end{proof}

\begin{prop}\label{prop:fun}
If $(C, F_0)$ satisfies Axiom $\Funx$, then so does $(C, K_0)$.
\end{prop}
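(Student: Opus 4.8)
The plan is to verify Axiom $\Funx$ for $(C, K_0)$ using elements, exactly in the style of the previous propositions. By Paragraph \ref{paragr:element}, proving the commutativity of the square defining $\Fun{i}{j}$ for the twisted structure amounts to checking, for every $i > j \ge 0$ and every $(\xb, \yb)$ in $\Xt{i} \fibrxt{j} \Xt{i}$, the identity
\[
\Glkt{i}\big(\xb \Glnt[j]{i} \yb\big) = \Glkt{i}(\xb) \Glnt[j]{i+1} \Glkt{i}(\yb).
\]
Before starting, I would observe that the right-hand side is well-defined: by Proposition \ref{prop:thkt} the map $\Glkt{i}$ has the expected globular source and target, so that $\Glst[j]{i+1}\Glkt{i}(\xb) = \Glst[j]{i}(\xb) = \Gltt[j]{i}(\yb) = \Gltt[j]{i+1}\Glkt{i}(\yb)$, i.e. $(\Glkt{i}(\xb), \Glkt{i}(\yb))$ indeed lies in $\Xt{i+1} \fibrxt{j} \Xt{i+1}$.

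First I would expand both sides from the definitions of $\Glnt[j]{i}$ and $\Glkt{i}$ in Paragraph \ref{paragr:def_thn}. The element $\xb \Glnt[j]{i} \yb$ has coordinates $x_1, \dots, x_{j+1}$ followed by the $x_l \comp_j^l y_l$ for $j+2 \le l \le i+1$, and applying $\Glkt{i}$ appends the single coordinate $\Glk{i+1}\Glk{i}\Gls{i+1}(x_{i+1} \comp_j^{i+1} y_{i+1})$. On the other side, $\Glkt{i}(\xb)$ and $\Glkt{i}(\yb)$ append $\Glk{i+1}\Glk{i}\Gls{i+1}(x_{i+1})$ and $\Glk{i+1}\Glk{i}\Gls{i+1}(y_{i+1})$ respectively, and $\Glnt[j]{i+1}$ then composes coordinatewise. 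A direct comparison shows that the first $i+1$ coordinates coincide on the nose, so the whole statement reduces to the single identity between the $(i+2)$-th coordinates
\[
\Glk{i+1}\Glk{i}\Gls{i+1}\big(x_{i+1} \comp_j^{i+1} y_{i+1}\big) = \Glk{i+1}\Glk{i}\Gls{i+1}(x_{i+1}) \comp_j^{i+2} \Glk{i+1}\Glk{i}\Gls{i+1}(y_{i+1}).
\]

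This last identity is the only substantive step, and it is where the hypothesis is used. Since $j \le i-1 < i$, the first defining condition of an \oo-precategory (the computation of the globular source of a composite, valid for $X$ because $C$ is precategorical) applies in its non-degenerate case and yields $\Gls{i+1}(x_{i+1} \comp_j^{i+1} y_{i+1}) = \Gls{i+1}(x_{i+1}) \comp_j^i \Gls{i+1}(y_{i+1})$. I would then apply $\Glk{i}$ and invoke Axiom $\Fun{i}{j}$ of $(C, F_0)$, and apply $\Glk{i+1}$ and invoke Axiom $\Fun{i+1}{j}$, thereby distributing the two unit maps across the composition and reaching the right-hand side. I do not expect any genuine obstacle here: once the reduction to the top coordinate is made, the computation is mechanical, and the only ingredients needed are the source-of-a-composite relation together with $\Funx$ at the two consecutive levels, both granted by hypothesis.
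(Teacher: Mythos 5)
Your proposal is correct and follows essentially the same route as the paper's proof: expand both sides from the definitions, observe that the first $i+1$ coordinates agree, reduce to the top coordinate, compute $\Gls{i+1}(x_{i+1} \comp_j^{i+1} y_{i+1}) = \Gls{i+1}(x_{i+1}) \comp_j^i \Gls{i+1}(y_{i+1})$ from the precategory source relation, and then distribute $\Glk{i}$ and $\Glk{i+1}$ using Axioms $\Fun{i}{j}$ and $\Fun{i+1}{j}$. The preliminary well-definedness remark is a harmless addition the paper leaves implicit.
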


\begin{proof}
Let $i > j \ge 0$ and let $(\xb, \yb)$ be in $\Xt{i} \fibrxt{j} \Xt{i}$. We have
\[
\begin{split}
\Glkt{i}\big(\xb \Glnt[j]{i} \yb\big) 
& =
\Glkt{i}\big(x_1, \dots, x_{j+1}, x_{j+2} \comp_j^{j+2} y_{j+2}, \dots,
x_{i+1} \comp_j^{i+1} y_{i+1}\big) \\
& =
\big(x_1, \dots, x_{j+1}, x_{j+2} \comp_j^{j+2} y_{j+2}, \dots,
x_{i+1} \comp_j^{i+1} y_{i+1}, \\
& \qquad\quad \Glk[i+2]{i}\Gls{i+1}(x_{i+1} \comp_j^{i+1}
y_{i+1})\big) \\
& =
\big(x_1, \dots, x_{j+1}, x_{j+2} \comp_j^{j+2} y_{j+2}, \dots,
x_{i+1} \comp_j^{i+1} y_{i+1}, \\
& \qquad\quad \Glk[i+2]{i}(\Gls{i+1}(x_{i+1}) \comp_j^i
\Gls{i+1}(y_{i+1}))\big) \\
& =
\big(x_1, \dots, x_{j+1}, x_{j+2} \comp_j^{j+2} y_{j+2}, \dots,
x_{i+1} \comp_j^{i+1} y_{i+1}, \\
& \qquad\quad (\Glk[i+2]{i}\Gls{i+1}(x_{i+1}) \comp_j^{i+2}
\Glk[i+2]{i}\Gls{i+1}(y_{i+1}))\big) \\*
& \phantom{=1} \text{(by Axioms $\Fun{i}{j}$ and $\Fun{i+1}{j}$)} \\
& =
\big(x_1, \dots, x_{i+1}, \Glk[i+2]{i}\Gls{i+1}(x_{i+1})\big)
\Glnt[j]{i}
\big(y_1, \dots, y_{i+1}, \Glk[i+2]{i}\Gls{i+1}(y_{i+1})\big) \\
& =
\Glkt{i}\big(\xb\big) \Glnt[j]{i+1} \Glkt{i}\big(\yb\big).
\end{split}
\]
\end{proof}

\begin{prop}
The $\Thwt[j]{i}$'s are well-defined. Moreover, if $(C, F_0)$ satisfies
Axioms $\Assx$, $\Excx$, $\Lunx$, $\Runx$ and $\RInvx$, then the
$\Thwt[j]{i}$'s have the desired globular source and target, i.e., they
satisfy the condition of the definition of a pregroupoidal globular
extension (see Paragraph \ref{paragr:pregr}).
\end{prop}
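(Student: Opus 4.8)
The plan is to argue throughout ``using elements'' in the sense of Paragraph~\ref{paragr:element}: I fix the globular presheaf $X$ and replace each coglobular identity to be proved by the dual identity between the maps $\Glwt[j]{i} : \Xt{i} \to \Xt{i}$, whose explicit formula is given in Paragraph~\ref{paragr:def_thn}. The starting observation is that an $(i+1)$-tuple $(z_1, \dots, z_{i+1})$ belongs to $\Xt{i} = X_1 \fibrx{0} \dots \fibrx{i-1} X_{i+1}$ exactly when $\Gls{l}(z_l) = \Glt{l}\Glt{l+1}(z_{l+1})$ for $1 \le l \le i$. To prove that $\Thwt[j]{i}$ is well-defined it then suffices to check that $\Glwt[j]{i}(\xb)$ satisfies these relations for every $\xb$ in $\Xt{i}$. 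I would verify this position by position: away from the two boundary indices $l \in \{j, j+1\}$ the relation is immediate from the structural source and target formulas of the $\comp$'s and of the $\Glw[j]{i}$'s together with the hypothesis $\xb \in \Xt{i}$, while at $l = j$ and $l = j+1$ one uses in addition the identities $\Glt{j+1}(u \comp^{j+1}_j v) = \Glt{j+1}(u)$ and $\Gls{j+1}(u \comp^{j+1}_j v) = \Gls{j+1}(v)$. This first part needs no axioms beyond those built into the definition of a pregroupoidal globular extension.

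For the globular source and target, Paragraph~\ref{paragr:element} reduces the claim to the four dual identities
\[
\Glst{i}\Glwt[j]{i}(\xb) =
\begin{cases}
\Gltt{i}(\xb), & j = i-1,\\
\Glwt[j]{i-1}\Glst{i}(\xb), & j < i-1,
\end{cases}
\qquad
\Gltt{i}\Glwt[j]{i}(\xb) =
\begin{cases}
\Glst{i}(\xb), & j = i-1,\\
\Glwt[j]{i-1}\Gltt{i}(\xb), & j < i-1,
\end{cases}
\]
for every $\xb$ in $\Xt{i}$. The two target identities are immediate, since $\Gltt{i}$ merely forgets the last component: in each case one reads off both sides directly.

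The content is in the two source identities. For $j < i-1$, the only nontrivial component of $\Glst{i}\Glwt[j]{i}(\xb)$ is the last one, namely $\Glw[j]{i}(x_i) \comp^i_{i-1} \Glt{i+1}(\Glw[j]{i+1}(x_{i+1}))$. I would first rewrite $\Glt{i+1}(\Glw[j]{i+1}(x_{i+1})) = \Glw[j]{i}(\Glt{i+1}(x_{i+1}))$ by the structural target formula of $\Glw[j]{i+1}$, and then invoke Axiom $\FInv{i}{i-1}{j}$ (in the branch $i-1 \ne j$) to identify the result with $\Glw[j]{i}\big(x_i \comp^i_{i-1} \Glt{i+1}(x_{i+1})\big)$, which is precisely the last component of $\Glwt[j]{i-1}\Glst{i}(\xb)$; the remaining components agree on the nose. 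Since Axiom $\FInvx$ follows from $\Assx$, $\Excx$, $\Lunx$, $\Runx$ and $\RInvx$ (Paragraph~\ref{paragr:def_oo-grpd}), this step is covered by the hypotheses.

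The hard part is the source identity for $j = i-1$, where the last component of $\Glst{i}\Glwt[i-1]{i}(\xb)$ must collapse to $x_i$. Setting $b = \Glt{i+1}(x_{i+1})$ and again using the structural target formula to replace $\Glt{i+1}(\Glw[i-1]{i+1}(x_{i+1}))$ by $\Glw[i-1]{i}(b)$, this component equals $(x_i \comp^i_{i-1} b) \comp^i_{i-1} \Glw[i-1]{i}(b)$. I would then apply associativity $\Ass{i}{i-1}$, the right-inverse axiom $\RInv{i}{i-1}$ to rewrite $b \comp^i_{i-1} \Glw[i-1]{i}(b)$ as $\Glk[i]{i-1}\Glt[i-1]{i}(b)$, the defining relation $\Gls[i-1]{i}(x_i) = \Glt{i}\Glt{i+1}(x_{i+1})$ of $\Xt{i}$, and finally the right-unit axiom $\Run{i}{i-1}$ to reach $x_i$. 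This chain is the only place where associativity, inverses and units genuinely interact, and the main difficulty is the bookkeeping of indices --- in particular choosing the correct branch ($j = j'$ versus $j \ne j'$) of $\FInvx$ and keeping source and target faces straight.
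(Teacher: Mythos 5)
Your proposal is correct and follows essentially the same route as the paper: reduction to elements, a purely structural verification of well-definedness, the target identities read off immediately, the $j<i-1$ source identity via Axiom $\FInvx$ (deduced from the listed axioms as in Paragraph \ref{paragr:def_oo-grpd}), and the $j=i-1$ source identity via the chain $\Assx$, $\RInvx$, the fibre-product relation, and $\Runx$. The only detail glossed over (as the paper also does with its ellipses) is that in the edge case $j=i-2$ the $(j+1)$-st components match only after one application of $\Glt{i}(u \comp^i_{i-1} v)=\Glt{i}(u)$, which is harmless.
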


\begin{proof}
Note that by the remark at the end of Paragraph \ref{paragr:def_oo-grpd},
$(C, F_0)$ also satisfies Axiom $\FInvx$.

Let $i > j \ge 0$ and let $\xb$ be in $\Xt{i}$. Let us first prove that
$\Glwt[j]{i}(\xb)$ belongs to $\Xt{i}$. We need to show that
\[
\begin{split}
\Gls{j}(x_j) & = \Glt{j}\Glt{j+1}\big(x_{j+1} \comp_j^{j+1}
\Glt{j+2}(x_{j+2})\big),\\
\Gls{j+1}\big(x_{j+1} \comp_j^{j+1} \Glt{j+2}(x_{j+2})\big) &
= \Glt{j+1}\Glt{j+2}\Glw[j]{j+2}(x_{j+2}),
\end{split}
\]
and
\[
\Gls{l}\Glw[j]{l}(x_l) = \Glt{l}\Glt{l+1}\Glw[j]{l+1}(x_{l+1}),
\quad j+2 \le l \le i.
\]
The first identify has already been proved in Paragraph
\ref{paragr:element}. The two others follow from the following calculations:
\[
\begin{split}
\Gls{j+1}\big(x_{j+1} \comp_j^{j+1} \Glt{j+2}(x_{j+2})\big)
& =
\Gls{j+1}\Glt{j+2}(x_{j+2}) \\
& =
\Glt{j+1}\Glw[j]{j+1}\Glt{j+2}(x_{j+2}) \\
& =
\Glt{j+1}\Glt{j+2}\Glw[j]{j+2}(x_{j+2}),
\end{split}
\]
and
\[
\begin{split}
\Gls{l}\Glw[j]{l}(x_l)
& =
\Glw[j]{l-1}\Gls{l}(x_l) \\
& =
\Glw[j]{l-1}\Glt{l}\Glt{l+1}(x_{l+1}) \\
& =
\Glt{l}\Glt{l+1}\Glw[j]{l+1}(x_{l+1}).
\end{split}
\]

Let us now prove that $\Glwt[j]{i}(\xb)$ has the desired globular source and target.
For $j = i - 1$, we have
\begin{align*}
\Glst{i}\Glwt[i-1]{i}(\xb)
& =
\Glst{i}\big(x_1, \dots, x_{i-1}, x_i \comp^i_{i-1} \Glt{i+1}(x_{i+1}),
\Glw[i-1]{i+1}(x_{i+1})\big) \\
& =
\big(x_1, \dots, x_{i-1}, \big(x_i \comp^i_{i-1} \Glt{i+1}(x_{i+1})\big)
\comp^i_{i-1} \Glt{i+1}\Glw[i-1]{i+1}(x_{i+1})\big) \\
& =
\big(x_1, \dots, x_{i-1}, \big(x_i \comp^i_{i-1} \Glt{i+1}(x_{i+1})\big)
\comp^i_{i-1} \Glw[i-1]{i}\Glt{i+1}(x_{i+1})\big) \\
& =
\big(x_1, \dots, x_{i-1}, x_i \comp^i_{i-1} \big( \Glt{i+1}(x_{i+1})
\comp^i_{i-1} \Glw[i-1]{i}\Glt{i+1}(x_{i+1})\big)\big) \\*
& \phantom{=1} \text{(by Axiom $\Ass{i}{i-1}$)} \\
& =
\big(x_1, \dots, x_{i-1}, x_i \comp^i_{i-1}
\Glk{i-1}\Glt{i}\Glt{i+1}(x_{i+1})\big) \\*
& \phantom{=1} \text{(by Axiom $\RInv{i}{i-1}$)} \\
& =
\big(x_1, \dots, x_{i-1}, x_i \comp^i_{i-1}
\Glk{i-1}\Gls{i}(x_i)\big) \\
& =
\big(x_1, \dots, x_i\big) \\*
& \phantom{=1} \text{(by Axiom $\Run{i}{i-1}$)} \\*
& =
\Gltt{i}(\xb),
\end{align*}
and
\[
\begin{split}
\Gltt{i}\Glwt[i-1]{i}(\xb) & =
\Gltt{i}\big(x_1, \dots, x_{i-1}, x_i \comp^i_{i-1} \Glt{i+1}(x_{i+1}),
\Glw[i-1]{i+1}(x_{i+1})\big) \\
& =
\big(x_1, \dots, x_{i-1}, x_i \comp^i_{i-1} \Glt{i+1}(x_{i+1})\big) \\
& =
\Glst{i}(\xb).
\end{split}
\]
For $j < i - 1$, we have
\begin{align*}
\Glst{i}\Glwt[j]{i}(\xb)
& =
\Glst{i}\big(x_1, \dots, x_j, x_{j+1} \comp^{j+1}_j \Glt{j+2}(x_{j+2}),
\Glw[j]{j+2}(x_{j+2}), \dots, \Glw[j]{i+1}(x_{i+1})\big) \\
& =
\big(x_1, \dots, x_j, x_{j+1} \comp^{j+1}_j \Glt{j+2}(x_{j+2}), \\
& \qquad\quad 
\Glw[j]{j+2}(x_{j+2}), \dots, \Glw[j]{i-1}(x_{i-1}),
\Glw[j]{i}(x_i) \comp^i_{i-1} \Glt{i+1}\Glw[j]{i+1}(x_{i+1})\big) \\
& =
\big(x_1, \dots, x_j, x_{j+1} \comp^{j+1}_j \Glt{j+2}(x_{j+2}), \\
& \qquad\quad 
\Glw[j]{j+2}(x_{j+2}), \dots, \Glw[j]{i-1}(x_{i-1}),
\Glw[j]{i}(x_i) \comp^i_{i-1} \Glw[j]{i}\Glt{i+1}(x_{i+1})\big) \\
& =
\big(x_1, \dots, x_j, x_{j+1} \comp^{j+1}_j \Glt{j+2}(x_{j+2}), \\
& \qquad\quad 
\Glw[j]{j+2}(x_{j+2}), \dots, \Glw[j]{i-1}(x_{i-1}),
\Glw[j]{i}\big(x_i \comp^i_{i-1} \Glt{i+1}(x_{i+1})\big)\big) \\*
& \phantom{=1} \text{(by Axiom $\FInv{i}{i-1}{j}$)} \\
& =
\Glwt[j]{i-1}\big(x_1, \dots, x_{i-1}, x_i \comp^i_{i-1}
\Glt{i+1}(x_{i+1})\big) \\*
& =
\Glwt[j]{i-1}\Glst{i}(\xb),
\end{align*}
and
\[
\begin{split}
\Gltt{i}\Glwt[j]{i}(\xb)
& =
\Gltt{i}\big(x_1, \dots, x_j, x_{j+1} \comp^{j+1}_j \Glt{j+2}(x_{j+2}),
\Glw[j]{j+2}(x_{j+2}), \dots, \Glw[j]{i+1}(x_{i+1})\big) \\
& =
\big(x_1, \dots, x_j, x_{j+1} \comp^{j+1}_j \Glt{j+2}(x_{j+2}),
\Glw[j]{j+2}(x_{j+2}), \dots, \Glw[j]{i}(x_i)\big) \\
& =
\Glwt[j]{i-1}\big(x_1, \dots, x_i\big) \\
& =
\Glwt[j]{i-1}\Gltt{i}(\xb),
\end{split}
\]
hence the result.
\end{proof}

\begin{prop}
If $(C, F_0)$ satisfies Axioms $\LInvx$ and $\RInvx$, then so does $(C, K_0)$.
\end{prop}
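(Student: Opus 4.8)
The plan is to verify the two axioms directly on elements, in the style of the preceding propositions. Fix the globular presheaf $X$ on $C$ and let $\xb$ be in $\Xt{i}$. Dualizing Axioms $\LInv{i}{j}$ and $\RInv{i}{j}$, I must establish the identities
\[
\Glwt[j]{i}(\xb) \Glnt[j]{i} \xb = \Glkt[i]{j}\Glst[j]{i}(\xb)
\quad\text{and}\quad
\xb \Glnt[j]{i} \Glwt[j]{i}(\xb) = \Glkt[i]{j}\Gltt[j]{i}(\xb)
\]
for all $i > j \ge 0$. Both twisted composites are legitimate, since the source and target computations of the previous proposition show that $(\Glwt[j]{i}(\xb), \xb)$ and $(\xb, \Glwt[j]{i}(\xb))$ lie in $\Xt{i} \fibrxt{j} \Xt{i}$. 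The crucial observation is that the right-hand sides have already been computed: they are precisely the tuples $(\ast_{ks})$ and $(\ast_{kt})$ obtained in the proof of Proposition \ref{prop:lrun}, and, as that argument shows, those formulas hold with no appeal to Axioms $\Lunx$ or $\Runx$ (they use only the inductive formula for $\Glkt[i]{j}$ and Lemma \ref{lemma:struct_xt}), so they are available under the present hypotheses.

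For the first identity I would write $\Glwt[j]{i}(\xb) = (w_1, \dots, w_{i+1})$ with $w_l = x_l$ for $l \le j$, $w_{j+1} = x_{j+1} \comp_j^{j+1} \Glt{j+2}(x_{j+2})$ and $w_l = \Glw[j]{l}(x_l)$ for $j+2 \le l \le i+1$, then expand $\Glwt[j]{i}(\xb) \Glnt[j]{i} \xb$ via the defining formula for $\Glnt[j]{i}$. Its first $j+1$ entries are $w_1, \dots, w_{j+1}$, and for $j+2 \le l \le i+1$ its $l$-th entry is $w_l \comp_j^l x_l = \Glw[j]{l}(x_l) \comp_j^l x_l$, which equals $\Glk[l]{j}\Gls[j]{l}(x_l)$ by Axiom $\LInv{l}{j}$ of $(C, F_0)$. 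The resulting tuple is exactly the right-hand side of $(\ast_{ks})$, i.e.\ $\Glkt[i]{j}\Glst[j]{i}(\xb)$.

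The second identity is symmetric: in $\xb \Glnt[j]{i} \Glwt[j]{i}(\xb)$ the first $j+1$ entries are $x_1, \dots, x_{j+1}$, while for $j+2 \le l \le i+1$ the $l$-th entry is $x_l \comp_j^l \Glw[j]{l}(x_l) = \Glk[l]{j}\Glt[j]{l}(x_l)$ by Axiom $\RInv{l}{j}$ of $(C, F_0)$, reproducing the right-hand side of $(\ast_{kt})$, i.e.\ $\Glkt[i]{j}\Gltt[j]{i}(\xb)$. Since the defining formula for $\Glwt[j]{i}$ is a single uniform expression, the boundary case $j = i-1$, where only the entry $\Glw[j]{i+1}(x_{i+1})$ survives from the inverted block, needs no separate treatment.

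There is no real difficulty here---the argument is componentwise bookkeeping once the formulas $(\ast_{ks})$ and $(\ast_{kt})$ are in hand. The one point to watch is the matching of the first $j+1$ entries. For $\LInv$ the nontrivial $(j+1)$-st entry $x_{j+1} \comp_j^{j+1} \Glt{j+2}(x_{j+2})$ is supplied by the \emph{left} factor $\Glwt[j]{i}(\xb)$ and must be identified with the corresponding entry of $(\ast_{ks})$; for $\RInv$ these entries come unchanged from $\xb$ and agree with $(\ast_{kt})$. Recognizing that $(\ast_{ks})$ and $(\ast_{kt})$ already package exactly these tuples is what makes both axioms fall out at once.
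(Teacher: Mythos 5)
Your proof is correct and follows essentially the same route as the paper: expand $\Glwt[j]{i}(\xb) \Glnt[j]{i} \xb$ and $\xb \Glnt[j]{i} \Glwt[j]{i}(\xb)$ componentwise, apply Axioms $\LInv{l}{j}$ and $\RInv{l}{j}$ of $(C,F_0)$ in each coordinate $j+2 \le l \le i+1$, and recognize the results as the tuples $(\ast_{ks})$ and $(\ast_{kt})$ from the proof of Proposition \ref{prop:lrun}. Your explicit remark that those two formulas were derived without invoking $\Lunx$ or $\Runx$ (only the inductive formula for $\Glkt[i]{j}$, the source/target rules, and Lemma \ref{lemma:struct_xt}) is a point the paper leaves implicit, and it is a correct and worthwhile check.
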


\begin{proof}
Let $i > j \ge 0$ and let $\xb$ be in $\Xt{i}$. We have
\begin{align*}
\Glwt[j]{i}(\xb) \Glnt[j]{i} \xb
& =
\big(x_1, \dots, x_j, x_{j+1} \comp_j^{j+1} \Glt{j+2}(x_{j+2}),\\*
& \qquad\quad
\Glw[j]{j+2}(x_{j+2}), \dots, \Glw[j]{i+1}(x_{i+1})\big)
\Glnt[j]{i} \xb \\
& =
\big(x_1, \dots, x_j, x_{j+1} \comp_j^{j+1} \Glt{j+2}(x_{j+2}),\\*
& \qquad\quad
\Glw[j]{j+2}(x_{j+2}) \comp_j^{j+2} x_{j+2}, \dots, \Glw[j]{i+1}(x_{i+1})
\comp_j^{i+1} x_{i+1}\big) \\
& =
\big(x_1, \dots, x_j, x_{j+1} \comp_j^{j+1} \Glt{j+2}(x_{j+2}),\\*
& \qquad\quad
\Glk[j+2]{j}\Gls[j]{j+2}(x_{j+2}), \dots,
\Glk[i+1]{j}\Gls[j]{i+1}(x_{i+1})\big) \\*
& \phantom{=1}\text{(by Axioms $\LInv{l}{j}$ for $l$ such that $j+2 \le l
\le i+1$)} \\*
& = \Glkt[i]{j}\Glst[j]{i}(\xb),
\end{align*}
where the last equality is Equation (\ref{eq:ks}) (see the proof of
Proposition \ref{prop:lrun}), and
\[
\begin{split}
\xb \Glnt[j]{i} \Glwt[j]{i}(\xb) 
& =
\xb \Glnt[j]{i} \big(x_1, \dots, x_j, x_{j+1} \comp_j^{j+1}
\Glt{j+2}(x_{j+2}),\\
& \qquad\qquad\qquad\quad
\Glw[j]{j+2}(x_{j+2}), \dots, \Glw[j]{i+1}(x_{i+1})\big) \\
& =
\big(x_1, \dots, x_{j+1}, \\
& \qquad\quad
x_{j+2} \comp_j^{j+2} \Glw[j]{j+2}(x_{j+2}), \dots,
x_{i+1} \comp_j^{i+1} \Glw[j]{i+1}(x_{i+1})\big) \\
& =
\big(x_1, \dots, x_{j+1},
\Glk[j+2]{j}\Glt[j]{j+2}(x_{j+2}), \dots,
\Glk[i+1]{j}\Glt[j]{i+1}(x_{i+1})\big) \\*
& \phantom{=1}\text{(by Axioms $\RInv{l}{j}$ for $l$ such that $j+2 \le l
\le i+1$)} \\
& =
\Glkt[i]{j}\Gltt[j]{i}(\xb),
\end{split}
\]
where the last equality is Equation (\ref{eq:kt}) (see the proof of
Proposition \ref{prop:lrun}).
\end{proof}

\begin{coro}\label{coro:ck_gr}
If $(C, F_0)$ is groupoidal, then $(C,
K_0)$ (endowed with the $\Thnt[j]{i}$'s, $\Thkt{i}$'s and $\Thwt[j]{i}$'s) 
is a groupoidal globular extension.
\end{coro}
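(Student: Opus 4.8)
The plan is to read off the statement as the conjunction of all the structural facts established in this section, so the proof reduces to checking that every hypothesis needed along the way is supplied by the assumption that $(C, F_0)$ is groupoidal. First I would unwind the definition of a groupoidal globular extension (Paragraph~\ref{paragr:pregr}): one must exhibit $(C, K_0)$ as a globular extension under $\Thz$, equip it with the morphisms $\Thnt[j]{i}$, $\Thkt{i}$ and $\Thwt[j]{i}$ having the correct globular sources and targets so that it becomes a pregroupoidal globular extension, and then verify the categorical axioms $\Assx$, $\Excx$, $\Lunx$, $\Runx$, $\Funx$ together with the inverse axioms $\LInvx$ and $\RInvx$.

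The globular extension structure under $\Thz$ is already provided by Proposition~\ref{prop:shifted_ge} and Paragraph~\ref{paragr:shifted_under_thz}. For the remaining points, the key observation is that a groupoidal $(C, F_0)$ satisfies \emph{all} the axioms appearing as hypotheses in the preceding propositions: being categorical it satisfies $\Assx$, $\Excx$, $\Lunx$, $\Runx$ and $\Funx$, and being groupoidal it satisfies $\LInvx$ and $\RInvx$; moreover, by the remark at the end of Paragraph~\ref{paragr:def_oo-grpd}, it also satisfies $\FInvx$. Consequently every proposition in this section applies with no extra hypothesis left to check.

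Concretely, Propositions~\ref{prop:thnt} and~\ref{prop:thkt} show that the $\Thnt[j]{i}$'s and the $\Thkt{i}$'s are well-defined and have the desired source and target, so that $(C, K_0)$ is precategorical; the proposition immediately preceding the corollary does the same for the $\Thwt[j]{i}$'s, making $(C, K_0)$ pregroupoidal. Propositions~\ref{prop:ass}, \ref{prop:exc}, \ref{prop:lrun} and~\ref{prop:fun} then furnish Axioms $\Assx$, $\Excx$, $\Lunx$, $\Runx$ and $\Funx$, so that $(C, K_0)$ is categorical, and the last proposition supplies $\LInvx$ and $\RInvx$. Assembling these facts yields that $(C, K_0)$ is a groupoidal globular extension.

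The genuine content lies in the individual propositions rather than in this assembly, and the step I expect to be most delicate is the verification of the source and target conditions for the $\Thwt[j]{i}$'s (the proposition just before the corollary): that is the place where one must simultaneously invoke associativity, interchange, the unit axioms and $\RInvx$ --- and implicitly $\FInvx$ --- in order to commute the inverse operations $\Glw[j]{i}$ past the twisted face maps $\Glst{i}$ and $\Gltt{i}$.
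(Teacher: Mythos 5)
Your proposal is correct and matches the paper's intent exactly: the corollary is stated without a separate proof precisely because it is the assembly of Propositions~\ref{prop:thnt}, \ref{prop:ass}, \ref{prop:exc}, \ref{prop:thkt}, \ref{prop:lrun}, \ref{prop:fun} and the two unnumbered propositions on the $\Thwt[j]{i}$'s and on Axioms $\LInvx$ and $\RInvx$, all of whose hypotheses are supplied by the assumption that $(C, F_0)$ is groupoidal. Your observation that $\FInvx$ comes for free from the remark at the end of Paragraph~\ref{paragr:def_oo-grpd} is exactly the point the paper relies on in the $\Thwt[j]{i}$ proposition.
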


As announced at the beginning of this section, if $(C, F_0)$ is a groupoidal
globular extension, we will call $(C, K_0)$ the \ndef{twisted groupoidal globular
extension} of $(C, F_0)$.

\section{The décalage on $\Thtld$}\label{sec:shift_thtld}

\begin{paragr}
We now introduce the morphisms that will give rise to our décalage on~$\Thtld$.

Let $(C, F)$ be a globular extension endowed with $\Thn{i}$'s as in Section
\ref{sec:shifted_ge} and let $(C, K)$ be the twisted globular extension of
$(C, F)$.
We define morphisms
\[
\begin{split}
\alpha^{}_i & : \Dn{i} \to \Dnt{i}, \quad i \ge 0,\\
\beta^{}_i & : \Dn{0} \to \Dnt{i},\quad i \ge 0,\\
\end{split}
\]
by the formulas
\[
\begin{split}
\alpha^{}_i & =
\ceps{i+1}\Ths{i+1},\\
\beta^{}_i & =
\ceps{1}\Tht{1}.\\
\end{split}
\]
Dually, we define maps
\[
\begin{split}
a_i & : \Xt{i} \to X_i, \quad i \ge 0,\\
b_i & : \Xt{i} \to X_0, \quad i \ge 0,\\
\end{split}
\]
by the formulas
\[
\begin{split}
a_i(x_1, \dots, x_{i+1}) & = \Gls{i+1}(x_{i+1}), \\
b_i(x_1, \dots, x_{i+1}) & = \Glt{1}(x_{1}).
\end{split}
\]
\end{paragr}

\begin{prop}
The maps 
\[
\begin{split}
\Dn{i} & \mapsto \alpha^{}_i,
\quad i \ge 0,\\
\Dn{i} & \mapsto \beta^{}_i,
\quad i \ge 0,\\
\end{split}
\]
define natural transformations
\[
\xymatrix{
F \ar[r]^\alpha & K & \Dn{0} \ar[l]_{\beta}
}
\]
(where $\Dn{0}$ denotes the constant functor $\G \to C$ of value $\Dn{0}$).
\end{prop}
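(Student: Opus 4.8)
The plan is to reduce, as usual, to naturality on the two families of generating morphisms $\Ths{i+1}, \Tht{i+1} : \Dn{i} \to \Dn{i+1}$ of $\G$; naturality for an arbitrary morphism then follows by pasting naturality squares. Following the ``elements'' technique of Paragraph~\ref{paragr:element}, and since $a_i = X(\alpha^{}_i)$ and $b_i = X(\beta^{}_i)$ for the fixed globular presheaf $X$, it suffices to check that the families $(a_i)$ and $(b_i)$ are morphisms of globular sets, where the source and target maps attached to $K$ are $\Glst{i}$ and $\Gltt{i}$ and those attached to $F$ are the usual $\Gls{i}$ and $\Glt{i}$. Concretely, for $\alpha$ I must verify
\[
\Gls{i+1}\, a_{i+1} = a_i\, \Glst{i+1}
\quad\text{and}\quad
\Glt{i+1}\, a_{i+1} = a_i\, \Gltt{i+1},
\]
while for $\beta$ I must verify $b_{i+1} = b_i\, \Glst{i+1}$ and $b_{i+1} = b_i\, \Gltt{i+1}$.

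For the source identity of $\alpha$, writing $\xb = (x_1, \dots, x_{i+2})$ in $\Xt{i+1}$, the left-hand side equals $\Gls{i+1}\Gls{i+2}(x_{i+2})$, whereas the right-hand side unfolds, using the compatibility $\Gls{i+1}(u \comp_i^{i+1} v) = \Gls{i+1}(v)$ followed by the globular relation $\Gls{i+1}\Glt{i+2} = \Gls{i+1}\Gls{i+2}$, to the same value. For the target identity, the left-hand side equals $\Glt{i+1}\Glt{i+2}(x_{i+2})$ (again by a globular relation) and the right-hand side equals $\Gls{i+1}(x_{i+1})$; these coincide precisely because $\xb$ lies in the iterated fibre product $\Xt{i+1}$, i.e.\ by the second identity of Lemma~\ref{lemma:struct_xt} applied with $i+1$ in place of $i$ at $l = i$, which gives $\Gls{i+1}(x_{i+1}) = \Glt[i]{i+2}(x_{i+2}) = \Glt{i+1}\Glt{i+2}(x_{i+2})$.

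For $\beta$, both identities are immediate: the map $b_i$ returns $\Glt{1}(x_1)$ and so depends only on the first component of its argument, and both $\Glst{i+1}$ and $\Gltt{i+1}$ leave that first component unchanged. Hence $\beta$ is automatically natural, with no further computation. The main (and essentially the only) obstacle is thus the target naturality of $\alpha$, where one must recognise the equation to be proved as the defining relation of the fibre product $\Xt{i+1}$; the remaining verifications are direct substitutions into the definitions of $a_i$, $b_i$, $\Glst{i+1}$, and $\Gltt{i+1}$.
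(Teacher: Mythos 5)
Your proof is correct and takes essentially the same route as the paper's: reduce naturality to the cogenerators $\Ths{i}$ and $\Tht{i}$, pass to elements as in Paragraph~\ref{paragr:element}, and verify the four identities by direct computation, with the target naturality of $\alpha$ coming down to the fibre-product relation $\Gls{i+1}(x_{i+1}) = \Glt{i+1}\Glt{i+2}(x_{i+2})$ defining $\Xt{i+1}$. One minor caveat: your justification for $\beta$ --- that $\Glst{i+1}$ leaves the first component unchanged --- fails at $i=0$, where $\Glst{1}(x_1,x_2) = \big(x_1 \comp^1_0 \Glt{2}(x_2)\big)$; the identity $b_0\Glst{1} = b_1$ still holds there because $\Glt{1}\big(u \comp^1_0 v\big) = \Glt{1}(u)$.
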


\begin{proof}
Let us first prove that $\alpha$ is a natural transformation. We
must show that
\[
\Thst{i}\alpha^{}_{i-1} = \alpha^{}_i\Ths{i}
\quad\text{and}\quad
\Thtt{i}\alpha^{}_{i-1} = \alpha^{}_i\Tht{i},
\quad i \ge 1.
\]
Let $i \ge 1$ and let $\xb$ be in $\Xt{i}$.
We have
\[
\begin{split}
a_{i-1}\Glst{i}(\xb) 
& =
a_{i-1}(x_1, \dots, x_{i-1}, x_i \comp^i_{i-1} \Glt{i+1}(x_{i+1}))
\\
& =
\Gls{i}(x_i \comp^i_{i-1} \Glt{i+1}(x_{i+1}))
\\
& =
\Gls{i}\Glt{i+1}(x_{i+1})
\\
& =
\Gls{i}\Gls{i+1}(x_{i+1})
\\
& =
\Gls{i}a_i(\xb),
\end{split}
\]
and
\[
\begin{split}
a_{i-1}\Gltt{i}(\xb)
& = 
a_i(x_1, \dots, x_i)
\\
& = 
\Gls{i}(x_i)
\\
& = 
\Glt{i}\Glt{i+1}(x_{i+1})
\\
& = 
\Glt{i}\Gls{i+1}(x_{i+1})
\\
& = 
\Glt{i}a_i(\xb),
\end{split}
\]
hence the naturality of $\alpha$.

To prove the naturality of $\beta$, we must check 
that
\[
\Thst{i}\beta^{}_{i-1} = \beta^{}_i
\quad\text{and}\quad
\Thtt{i}\beta^{}_{i-1} = \beta^{}_i,
\quad i \ge 1.
\]
This follows from the following calculations:
\[
\begin{split}
b_{i-1}\Glst{i}(\xb) 
& =
b_{i-1}(x_1, \dots, x_{i-1}, x_i \comp^i_{i-1} \Glt{i+1}(x_{i+1}))
\\
& =
\Glt{1}(x_1)
\\
& =
b_i(\xb),
\end{split}
\]
and
\[
\begin{split}
b_{i-1}\Gltt{i}(\xb) 
& =
b_{i-1}(x_1, \dots, x_i)
\\
& =
\Glt{1}(x_1)
\\
& =
b_i(\xb).
\end{split}
\]
\end{proof}

\begin{paragr}\label{paragr:shift_thtld}
Let now $C$ be equal to $\Thtld$. By the previous proposition, we have a
diagram
\[
\xymatrix{
F \ar[r]^\alpha & K & \Dn{0} \ar[l]_{\beta}
}
\]
of functors from $\G$ to $\Thtld$.
The functor $F$ is globular by definition, the
functor $K$ is globular by
Proposition \ref{prop:shifted_ge} and the functor $\Dn{0}$ is trivially globular.
This diagram thus lives in $\ExtGl(\Thtld)$.
Let $F_0 : \Thz \to \Thtld$ be the canonical functor. 
By the universal property of $\Thz$ (Proposition \ref{prop:prop_univ_thz}),
we obtain a diagram
\[
\xymatrix{
F_0 \ar[r]^{\alpha^{}_0} & K_0 & \Dn{0} \ar[l]_{\beta^{}_0}
}
\]
in $\Homgl(\Thz, \Thtld)$. Note that for the same reason as in Paragraph
\ref{paragr:shifted_under_thz}, this lifting is unique. But this diagram
lives in $\ExtGr(\Thtld)$. Indeed, $(\Thtld, F_0)$ is a groupoidal globular
extension by definition, $(\Thtld, K_0)$ is a groupoidal globular extension
by Proposition \ref{coro:ck_gr} and $(\Thtld, \Dn{0})$ is trivially
a groupoidal globular extension. Hence by the universal property of $\Thtld$ (Proposition
\ref{prop:prop_univ_thtld}), this diagram lifts to a unique diagram
\[
\entrymodifiers={+!!<0pt,\fontdimen22\textfont2>}
\xymatrix{
\raisebox{0ex}{$\id{\Thtld}$} \ar[r]^{\widetilde{\alpha}} &
\raisebox{.0ex}{$\widetilde{K}$} & \Dn{0}
\ar[l]_{\widetilde{\beta}}
}
\]
in $\Homglz(\Thtld, \Thtld)$. This is our desired
décalage on $\Thtld$. We will denote it by $\DThtld$.
\end{paragr}

\begin{paragr}\label{paragr:split_shift_thtld}
We will now construct a splitting to the décalage $\DThtld$. Let
\[
\rho^{}_i : \Dnt{i} \to \Dn{i}, \quad i \ge 0,
\]
be the morphism defined by the formula
\[
\rho^{}_i = \big(\Tht[0]{i}\Thk{0}, \dots, \Tht[i-1]{i}\Thk{i-1}, \Thk{i}\big).
\]
This morphism is not natural in $i$. For instance, the square
\[
\entrymodifiers={+!!<0pt,\fontdimen22\textfont2>}
\xymatrix{
\raisebox{0ex}{$\Dnt{0}$} \ar[d]_{\Thst{1}} \ar[r]^{\rho^{}_0} & \Dn{0}^{} \ar[d]^{\Ths{1}} \\
\raisebox{0ex}{$\Dnt{1}$} \ar[r]_{\rho^{}_1} & \Dn{1}
}
\]
is not commutative. Therefore, we cannot extend formally $\rho$ to a general
globular sum. Denote by
\[
\rho^{}_{j,i} : \Dnt{j,i} \to \Dn{i},\quad i \ge j \ge 0,
\]
the composition of the canonical morphism $\Dnt{j,i} \to \Dnt{i}$ followed
by $\rho^{}_i$. If $S$ is a globular sum whose table of dimensions
is
\[ \tabdim, \]
we define
\[ 
\rho^{}_S : \widetilde{S} = \Dnt{i_1} \amalgd{i'_1} \Dnt{i'_1+1,i_2}
\amalgd{i'_2} \dots \amalgd{i'_{n-1}} \Dnt{i'_{n-1}+1,i_n} \to S
\]
by the formula
\[
\rho^{}_S = \rho^{}_{i_1} \amalgd{i'_1} \rho^{}_{i'_1+1,i_2} \amalgd{i'_2} \dots
\amalgd{i'_{n-1}} \rho^{}_{i'_{n-1}+1, i_n}.
\]

Dually, we define maps
\[
\begin{split}
r_i & :  X_i \to \Xt{i}, \quad i \ge 0, \\
r_{j,i} & :  X_i \to \Xt{j,i}, \quad i \ge j \ge 0, \\
r^{}_S & : X(S) \to \widetilde{X}(S),\quad{\text{$S$ globular sum}},
\end{split}
\]
by the formulas
\[
\begin{split}
r_i(x_i) & = \big(\Glk{0}\Glt[0]{i}(x_i), \dots, \Glk{i-1}\Glt[i-1]{i}(x_i),
\Glk{i}(x_i)\big), \\
r_{j,i}(x_i) & = \big(\Glk{j}\Glt[j]{i}(x_i), \dots, \Glk{i-1}\Glt[i-1]{i}(x_i),
\Glk{i}(x_i)\big), \\
r^{}_S(x_{i_1}, \dots, x_{i_n}) & = \big(r_{i_1}(x_{i_1}),
r_{i'_1+1,i_2}(x_{i_2}), \dots, r_{i'_{n-1}+1,i_n}(x_{i_n})\big).
\end{split}
\]
\end{paragr}

\begin{prop}\label{prop:dthtld_split}
The $\rho^{}_S$'s are well-defined. Moreover, for every object $S$ of $\Thtld$, we
have
\[ \rho_S \widetilde{\alpha}^{}_S = \id{S}. \]
In other words, $\rho$ is a splitting of $\DThtld$.
\end{prop}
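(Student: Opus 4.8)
The plan is to prove both assertions dually, ``using elements'' in the sense of Paragraph~\ref{paragr:element}. By that principle, to see that the tuple defining $\rho^{}_i$ (resp.\ $\rho^{}_S$) is a well-defined morphism out of the globular sum $\Dnt{i}$ (resp.\ $\widetilde{S}$), it suffices to check, for every globular presheaf $X$, that the dual tuple lands in the corresponding globular product; and these dual tuples are precisely the maps $r_i$, $r_{j,i}$ and $r^{}_S$ already written down, so that $r^{}_S = X(\rho^{}_S)$. Likewise $\widetilde{\alpha}^{}_S$ is dual to $a^{}_S = X(\widetilde{\alpha}^{}_S)$, so that, $X$ being contravariant, $X(\rho^{}_S\widetilde{\alpha}^{}_S) = a^{}_S r^{}_S$; hence $\rho^{}_S\widetilde{\alpha}^{}_S = \id{S}$ will follow once $a^{}_S r^{}_S = \id{X(S)}$ is established. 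The whole statement thus reduces to two dual verifications: that the $r$'s take values in the asserted globular products, and that $a^{}_S r^{}_S$ is the identity.

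For well-definedness, I would first check that $r_i(x_i)$ lies in $\Xt{i}$, and more generally that $r_{j,i}(x_i)$ lies in $\Xt{j,i}$; since $r_{j,i}$ is $r_i$ followed by the projection $\Xt{i}\to\Xt{j,i}$, a single computation suffices. This is a direct check of the fibre-product conditions between consecutive components and uses only the unit relations $\Gls{l+1}\Glk{l} = \Glt{l+1}\Glk{l} = \id{X_l}$ together with the globular relations. The remaining and genuinely essential point is that $r^{}_S(x_{i_1},\dots,x_{i_n}) = \big(r_{i_1}(x_{i_1}), r_{i'_1+1,i_2}(x_{i_2}),\dots\big)$ satisfies the cross-gluing conditions of the globular product $\widetilde{X}(S) = \Xt{i_1}\fibrx{i'_1}\Xt{i'_1+1,i_2}\fibrx{i'_2}\cdots$ of Paragraph~\ref{paragr:iso_can}. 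At the gluing over $X_{i'_k}$, the left-hand block contributes, through its last component $\Glk{i_k}(x_{i_k})$, the value $\Gls[i'_k]{i_k+1}\Glk{i_k}(x_{i_k}) = \Gls[i'_k]{i_k}(x_{i_k})$, while the right-hand block $\Xt{i'_k+1,i_{k+1}}$ contributes, through its first component $\Glk{i'_k+1}\Glt[i'_k+1]{i_{k+1}}(x_{i_{k+1}})$, the value $\Glt[i'_k]{i'_k+2}\Glk{i'_k+1}\Glt[i'_k+1]{i_{k+1}}(x_{i_{k+1}}) = \Glt[i'_k]{i_{k+1}}(x_{i_{k+1}})$; again only the unit and globular relations are used. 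These two values coincide precisely because $(x_{i_1},\dots,x_{i_n})$ belongs to $X(S)$, i.e.\ satisfies $\Gls[i'_k]{i_k}(x_{i_k}) = \Glt[i'_k]{i_{k+1}}(x_{i_{k+1}})$. I expect this cross-gluing check to be the main obstacle, as it is the only place where the globular-sum structure of $X(S)$, and not merely the units, is used.

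For the splitting identity I would compute $a^{}_S r^{}_S$ projection by projection. The key preliminary observation is that $a^{}_S$ is ``blockwise'': by naturality of $\widetilde{\alpha}$ with respect to the coprojections $\ceps{k}\colon\Dn{i_k}\to S$ and the equality $\widetilde{\alpha}^{}_{\Dn{i}} = \alpha^{}_i = \ceps{i+1}\Ths{i+1}$, the composite of $a^{}_S$ with the $k$-th projection $X(S)\to X_{i_k}$ equals $a_{i_k}$ applied to the $k$-th block, and $a_{i_k}$ only reads off the last component of that block via $\Gls{i_k+1}$. Since the reconstruction $c^{-1}$ of Paragraph~\ref{paragr:iso_can} modifies only the first $i'_{k-1}+1$ components of a block, the last component of the $k$-th block of $r^{}_S(x)$ is the last component of $r_{i'_{k-1}+1,i_k}(x_{i_k})$, namely $\Glk{i_k}(x_{i_k})$. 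Hence the $k$-th component of $a^{}_S r^{}_S(x)$ is $\Gls{i_k+1}\Glk{i_k}(x_{i_k}) = x_{i_k}$, so $a^{}_S r^{}_S(x) = (x_{i_1},\dots,x_{i_n}) = x$ (the single-disk case $a_i r_i = \id{X_i}$ being the instance $n=1$, which needs no reconstruction). Translating back through Paragraph~\ref{paragr:element} gives $\rho^{}_S\widetilde{\alpha}^{}_S = \id{S}$ for every object $S$ of $\Thtld$, which is exactly the assertion that $\rho$ splits $\DThtld$.
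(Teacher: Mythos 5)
Your proposal is correct and follows essentially the same route as the paper: dualize via globular presheaves, verify that $r_i$ and $r_{j,i}$ land in the right fibre products using the unit relations, reduce the cross-gluing condition for $r^{}_S$ to the defining relations of $X(S)$, and prove the section identity blockwise by combining the naturality of $\widetilde{\alpha}$, the explicit form of the canonical isomorphism of Paragraph~\ref{paragr:iso_can}, and $\Gls{i+1}\Glk{i} = \id{X_i}$. The only (cosmetic) difference is that you track the last component of each block directly through $c^{-1}$, whereas the paper first records the factorization $\at^{}_S = a_{i_1} \fibrx{i'_1} a_{i'_1+1,i_2} \fibrx{i'_2} \dots \fibrx{i'_{n-1}} a_{i'_{n-1}+1,i_n}$ and then invokes $a_{j,i}r_{j,i} = \id{}$.
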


\begin{proof}
Let $i \ge 0$ and let $x_i$ be in $X_i$. To prove that $r_i(x_i)$ belongs
to $\Xt{i}$, we need to check that
\[
\Gls{l}\Glk{l-1}\Glt[l-1]{i}(x_i) = \Glt{l}\Glt{l+1}\Glk{l}\Glt[l]{i}(x_i),
\quad 1 \le l \le i.
\]
But using the identities
$\Gls{l}\Glk{l-1} = \id{X_{l-1}}$ and $\Glt{l+1}\Glk{l} = \id{X_l}$, we
get that both sides are equal to $\Glt[l-1]{i}(x_i)$. 

Let now $S$ be a globular sum whose table of dimensions is
\[ \tabdim, \]
and let
$(x_{i_1}, \dots, x_{i_n})$ be in $X(S)$.
To prove that $r^{}_S(x_{i_1}, \dots, x_{i_n})$ belongs to
$\widetilde{X}(S)$,
we need to check that
\[
\Gls[i'_l]{i_l+1}\Glk{i_l}(x_{i_l}) =
\Glt[i'_l]{i'_l+2}\Glk{i'_l+1}\Gls[i'_l + 1]{i_{l+1}}(x_{i_{l+1}}),
\quad 1 \le l \le n - 1.
\]
But this equality is equivalent to the equality
\[
\Gls[i'_l]{i_l}(x_{i_l}) =
\Glt[i'_l]{i_{l+1}}(x_{i_{l+1}})
\]
which holds by definition of $X(S)$.

Let us now prove that $r^{}_S$ is a section of $a_S$.
We easily check that $r_i$ is a section of $a_i$:
\[
\begin{split}
a_ir_i(x_i) 
& = \big(\Glk{0}\Glt[0]{i}(x_i), \dots, \Glk{i-1}\Glt[i-1]{i}(x_i),
\Glk{i}(x_i)\big) \\
& = \Gls{i+1}\Glk{i}(x_i) \\
& = x_i.
\end{split}
\]
More generally, if
\[ a_{j,i} : \Xt{j,i} \to X_i, \quad i \ge j \ge 0, \]
is defined by the formula
\[ a_{j,i}(x_{j+1}, \dots, x_{i+1}) = \Gls{i+1}(x_{i+1}), \]
the same calculation shows that $r_{j,i}$ is a section of $a_{j,i}$.

Let $\at^{}_S = X(\widetilde{\alpha}^{}_S)$
and let $\at'^{}_S$ be the morphism $\at^{}_S$ viewed
as a morphism
\[ X_{i_1} \fibrx{i'_1} \dots \fibrx{i'_{n-1}} X_{i_n} \to \Xt{i_1}
\fibrxt{i'_1} \dots  \fibrxt{i'_{n-1}} \Xt{i_n}.
\]
By definition, we have
\[
\at'^{}_S = r_{i_1} \times_{r_{i'_1}} \dots \times_{r_{i'_{n-1}}} r_{i_n}.
\]
Let $d$ be the canonical isomorphism
\[
\Xt{i_1} \fibrx{i'_1} \Xt{i'_1+1,i_2} \fibrx{i'_2} \dots 
\fibrx{i'_{n-1}} \Xt{i'_{n-1}+1, i_n} \to
\Xt{i_1} \fibrxt{i'_1} \dots  \fibrxt{i'_{n-1}} \Xt{i_n}.
\]
We recall that
\[
\begin{split}
\MoveEqLeft
d\big(x^1_1, \dots, x^1_{i_1 + 1}, x^2_{i'_1+2}, \dots, x^2_{i_2 + 1},
\dots, x^n_{i'_{n-1} + 2}, \dots, x^n_{i_n + 1}\big) \\
& = \big(x^1_1, \dots, x^1_{i_1 + 1}, x^2_1, \dots, x^2_{i_2 + 1},
\dots, x^n_1, \dots, x^n_{i_n + 1}\big),
\end{split}
\]
where the
\[ \text{$x^l_j$}, \quad 2 \le l \le n, \quad 1 \le j \le i'_l+1, \]
are defined by formulas given in Paragraph \ref{paragr:iso_can}.

We thus have
\[
\begin{split}
\MoveEqLeft
\at^{}_S\big(x^1_1, \dots, x^1_{i_1 + 1}, x^2_{i'_1+2}, \dots, x^2_{i_2 + 1},
\dots, x^n_{i'_{n-1} + 2}, \dots, x^n_{i_n + 1}\big) \\
& =
\at'_Sd\big(x^1_1, \dots, x^1_{i_1 + 1}, x^2_{i'_1+2}, \dots, x^2_{i_2 + 1},
\dots, x^n_{i'_{n-1} + 2}, \dots, x^n_{i_n + 1}\big) \\
& = \at'_S\big(x^1_1, \dots, x^1_{i_1 + 1}, x^2_1, \dots, x^2_{i_2 + 1},
\dots, x^n_1, \dots, x^n_{i_n + 1}\big) \\
& = \big(a_{i_1}(x^1_1, \dots, x^1_{i_1 + 1}), \dots,
a_{i_n}(x^n_1, \dots, x^n_{i_n + 1})\big) \\
& = \big(\Gls{i_1+1}(x^1_{i_1+1}), \dots, \Gls{i_n+1}(x^n_{i_n+1})\big),
\end{split}
\]
hence the equality
\[
\at^{}_S = a_{i_1} \fibrx{i'_1} a_{i'_1+1,i_2} \fibrx{i'_2} \dots
\fibrx{i'_{n-1}} a_{i'_{n-1}+1, i_n}.
\]

We can now compute $\at^{}_Sr^{}_S$:
\[
\begin{split}
\MoveEqLeft
\at^{}_Sr^{}_S\big(x_{i_1}, \dots, x_{i_n}\big) \\
& =
\at^{}_S\big(r_{i_1}(x_{i_1}), r_{i'_1+1,i_2}(x_{i_2}),
\dots, r_{i'_{n-1}+1, i_n}(x_{i_n})\big) \\
& =
\big(a_{i_1}r_{i_1}(x_{i_1}), a_{i'_1+1, i_2}r_{i'_1+1,i_2}(x_{i_2}),
\dots, a_{i'_{n-1}+1,i_n}r_{i'_{n-1}+1,i_n}(x_{i_n})\big) \\
& =
\big(x_{i_1}, \dots, x_{i_n}\big),
\end{split}
\]
where the last equality follows from the fact that $r_{j,i}$ is a section of
$a_{j,i}$. We thus have shown that $r^{}_S$ is a section of $\at^{}_S$.
\end{proof}

\newcommand\Deltat{\widetilde{\mathrm{\Delta}}}
\newcommand\Deltan[1]{\Delta_{#1}}
\newcommand\Deltatn[1]{\widetilde{\Delta}_{#1}}
\newcommand{\Thtldn}[1]{\widetilde{\Theta}_{#1}}
\newcommand\Thnd\nabla
\newcommand\Thkd\kappa
\newcommand\Thwd\Omega
\newcommand\Thndt{\widetilde{\nabla}}
\newcommand\Thkdt{\widetilde{\kappa}}
\newcommand\Thwdt{\widetilde{\Omega}}
\newcommand\Deltatz{\widetilde{\Delta}_0}
\begin{rem}
The category $\Thtld$ has been defined by a universal property related to
the notion of strict \oo-groupoid. For each $n \ge 1$, we can define a category
$\Thtldn{n}$ enjoying a similar universal property with respect to the
notion of strict $n$-groupoid. The category $\Thtldn{n}$ can be seen as the
full subcategory of $\Thtld$ whose objects are globular sums of dimension
at most $n$, i.e., globular sums
\[
\Dn{i_1} \amalgd{i'_1} \dots \amalgd{i'_{n-1}} \Dn{i_n},
\]
with $i_k \le n$ for all $k$ such that $1 \le k \le n$. Let us denote by $i_n$
the inclusion functor $\Thtldn{n} \to \Thtld$. This functor admits a left
adjoint $p_n : \Thtld \to \Thtldn{n}$ which truncates globular sums in
dimension $n$, i.e., which sends the globular sum
\[
\Dn{i_1} \amalgd{i'_1} \dots \amalgd{i'_{n-1}} \Dn{i_n}
\]
to the (possibly degenerated) globular sum
\[
\Dn{j_1} \amalgd{j'_1} \dots \amalgd{j'_{n-1}} \Dn{j_n},
\]
where
\[
\begin{split}
 j_k = \min(i_k, n), & \quad 1 \le k \le n,\\
 j'_k = \min(i'_k, n), & \quad 1 \le k \le n-1.
\end{split}
\]
Note that we have $p_ni_n = \id{\Thtld}$.

The décalage
\[
\DThtld =
\entrymodifiers={+!!<0pt,\fontdimen22\textfont2>}
\xymatrix{
\raisebox{0ex}{$\id{\Thtld}$} \ar[r]^{\widetilde{\alpha}} &
\raisebox{.0ex}{$\widetilde{K}$} & \Dn{0}
\ar[l]_{\widetilde{\beta}}
}
\]
induces a décalage
\[
\DThtldn{n} =
\entrymodifiers={+!!<0pt,\fontdimen22\textfont2>}
\xymatrix{
\raisebox{0ex}{$\id{\Thtldn{n}}$} \ar[r]^{\widetilde{\alpha}_n} &
\raisebox{.0ex}{$\widetilde{K}_n$} & \Dn{0}
\ar[l]_{\widetilde{\beta}_n}
}
\]
on $\Thtldn{n}$,
defined by
\[ 
\widetilde{K}_n = p_n\widetilde{K}i_n,\quad
\widetilde{\alpha}_n = p_n\ast\widetilde{\alpha}\ast i_n\quad
\text{and}\quad
\widetilde{\beta}_n = p_n\ast\widetilde{\beta}\ast i_n.
\]
Moreover, every splitting of $\DThtld$ induces a splitting of $\DThtldn{n}$.
Note that the inclusion functor $i_n : \Thtldn{n} \to \Thtld$ is \emph{not}
a morphism of décalages.

For each $n \ge 1$, the category $\Thtldn{n}$ is canonically isomorphic to
the full subcategory of the category of strict $n$-groupoids whose objects
are free strict $n$-groupoids on globular pasting schemes of dimension
at most $n$. In particular, $\Thtldn{1}$ is canonically isomorphic to the
category $\Deltat$ defined as follows:
the objects of $\Deltat$ are the sets
\[ \Deltan{n} = \{0, \dots, n\}, \quad n \ge 0, \]
and its morphisms are \emph{all} the applications between these sets.

Let us now try to understand the induced décalage on $\Deltat = \Thtldn{1}$.
The functor $\widetilde{K}_1$ sends $\Deltan{n}$ to
$\Deltan{n+1}$ and we thus set $\Deltatn{n} = \Deltan{n+1}$.
The functor $p_1 : \Thtld \to \Deltat$ sends the morphisms
\[
\begin{split}
\Thn[0]{1} & : \Dn{1} \to \Dn{1} \amalgd{0} \Dn{1},\\
\Thk{0} & : \Dn{1} \to \Dn{0},\\
\Thw[0]{1} & : \Dn{1} \to \Dn{1},\\
\end{split}
\]
to the morphisms
\[
\begin{split}
\Thnd & : \Deltan{1} \to \Deltan{1} \amalg_{\Deltan{0}} \Deltan{1} =
\Deltan{2}, \\
\Thkd & : \Deltan{1} \to \Deltan{0}, \\
\Thwd & : \Deltan{1} \to \Deltan{1},
\end{split}
\]
defined by
\[
\begin{split}
\Thnd & : 0 \mapsto 0, \quad 1 \mapsto 2,\\
\Thkd & : 0 \mapsto 0, \quad 1 \mapsto 0,\\
\Thwd & : 0 \mapsto 1, \quad 1 \mapsto 0.
\end{split}
\]
In the same way, the morphisms
\[
\begin{split}
\Thnt[0]{1} & : \Dnt{1} = \Dn{1} \amalgd{0} \Dn{2} \to \Dnt{1} \amalgdt{0}
\Dnt{1} = \Dn{1} \amalgd{0} \Dn{2} \amalgd{1} \amalgd{2},\\
\Thkt{0} & : \Dnt{1} = \Dn{1} \amalgd{0} \Dn{2}\to \Dnt{0} = \Dn{1},\\
\Thwt[0]{1} & : \Dnt{1} = \Dn{1} \amalgd{0} \Dn{2} \to \Dnt{1} = \Dn{1} \amalgd{0} \Dn{2},\\
\end{split}
\]
are sent to the morphisms
\[
\begin{split}
\Thndt & : \Deltatn{1} = \Deltan{2} \to \Deltatn{1} \amalg_{\Deltatn{0}}
\Deltatn{1} =
\Deltan{3}, \\
\Thkdt & : \Deltatn{1} = \Deltan{2} \to \Deltatn{0} = \Deltan{1}, \\
\Thwdt & : \Deltatn{1} = \Deltan{2} \to \Deltatn{1} = \Deltan{2},
\end{split}
\]
defined by
\[
\begin{split}
\Thndt & : 0 \mapsto 0, \quad 1 \mapsto 2, \quad 2 \mapsto 3,\\
\Thkdt & : 0 \mapsto 0, \quad 1 \mapsto 0, \quad 2 \mapsto 1,\\
\Thwdt & : 0 \mapsto 1, \quad 1 \mapsto 0, \quad 2 \mapsto 2.
\end{split}
\]
Let $D$ be the endofunctor of $\Deltat$ defined by 
\[ D(\Deltan{n}) = \Deltatn{n} = \Deltan{n+1} \]
for every $n \ge 0$, and by
\[
D(\varphi)(k) =
\begin{cases}
\varphi(k), & 0 \le k \le m, \\
n + 1, & k = m + 1,
\end{cases}
\]
for every morphism $\varphi : \Deltan{m} \to \Deltan{n}$ of $\Deltat$.
We have
\[
 \Thndt = D(\Thnd), \quad
 \Thkdt = D(\Thkd) \quad\text{and}\quad
 \Thwdt = D(\Thwd).
\]
Thus the functors $\widetilde{K}_1$ and $D$ agree on objects and on the
morphisms $\Thnd$, $\Thkd$ and $\Thwd$. The universal property of
$\Deltat = \Thtldn{1}$ then implies that $\widetilde{K}_1 = D$.
One can show in a similar way that the natural transformations
$\widetilde{\alpha}_1 : \id{\Deltat} \to
\widetilde{K}_1$ and $\widetilde{\beta}_1 : \Deltan{0} \to
\widetilde{K}_1$ are induced by the applications
\[
\begin{split}
\Deltan{n} & \to \Deltan{n+1}\\
k & \mapsto k
\end{split}
\qquad\text{and}\qquad
\begin{split}
\Deltan{0} & \to \Deltan{n+1}\\
0 & \mapsto n+1.
\end{split}
\]

Note that this décalage restricts to the subcategory $\mathrm{\Delta}$ of
$\Deltat$ whose objects are the $\Deltan{n}$'s and whose morphisms are
order-preserving maps. The induced décalage on $\mathrm{\Delta}$ is precisely
the one defined in Example 3.14 of \cite{MaltsiTheta}.
\end{rem}

\section{$\Thtld$ is a test category}

\begin{prop}
The object $\Dn{0}$ is terminal in $\Thtld$.
\end{prop}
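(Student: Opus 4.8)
The plan is to reduce the statement to the analogous fact in $\wgrp$, using the fully faithful embedding of Proposition~\ref{prop:desc_thtld}. First I would recall that $\Dn{0}$ is an object of $\Thtld$: it is the globular sum associated to the table of dimensions of width~$1$ concentrated in dimension~$0$, and the functor $\G \to \Thz \to \Thtld$ is the identity on it. By Proposition~\ref{prop:desc_thtld}, the composite $L : \Thtld \to \Mod{\Thtld} \to \wgrp$ is fully faithful and sends each object to the free strict \oo-groupoid on the corresponding globular pasting scheme.

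The heart of the argument is then the identification of $L(\Dn{0})$ with the terminal object of $\wgrp$. The globular pasting scheme underlying $\Dn{0}$ is the globular set with a single $0$-cell and no cells in positive dimension; since it has no generators above dimension~$0$, the free strict \oo-groupoid it generates consists of exactly one cell in each dimension, namely the iterated unit of the unique $0$-cell. This is the terminal strict \oo-groupoid $e$. Granting this, I would conclude as follows: for every object $S$ of $\Thtld$, full faithfulness of $L$ gives a bijection
\[ \Hom_{\Thtld}(S, \Dn{0}) \xrightarrow{\ \sim\ } \Hom_{\wgrp}(L(S), e), \]
and the target is a singleton because $e$ is terminal in $\wgrp$. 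Hence $\Hom_{\Thtld}(S, \Dn{0})$ is a singleton for every $S$, which is exactly the assertion that $\Dn{0}$ is terminal in $\Thtld$. In short, a terminal object of $\wgrp$ lying in the full subcategory $\Thtld$ is automatically terminal there.

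The only genuine point to check is the computation of the free strict \oo-groupoid on a single $0$-cell, and this is where I expect the (very mild) main obstacle to lie; it is routine once one observes that the free strict \oo-groupoid functor introduces no cells beyond the forced units in the absence of higher generators. Alternatively, one could stay inside $\Thtld$ and produce, for each globular sum $S$, a morphism $S \to \Dn{0}$ together with a proof of its uniqueness directly from the relations defining $\Thtld$; but routing through $\wgrp$ makes both existence and uniqueness immediate and avoids any combinatorial bookkeeping.
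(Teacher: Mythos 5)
Your proposal is correct and follows exactly the paper's route: the paper's proof is the one-line observation that the claim is an immediate consequence of Proposition~\ref{prop:desc_thtld}, i.e.\ of the fully faithful embedding of $\Thtld$ into $\wgrp$ sending $\Dn{0}$ to the free strict \oo-groupoid on a point, which is terminal. Your write-up merely fills in the (correct) details that the paper leaves implicit.
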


\begin{proof}
This is an immediate consequence of Proposition \ref{prop:desc_thtld}.
\end{proof}

\begin{prop}\label{prop:sep_int}
$(\Dn{1}, \Ths{1}, \Tht{1})$ is a separating interval on $\pref{\Thtld}$.
\end{prop}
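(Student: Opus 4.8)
The plan is to read off the interval structure from representable presheaves and then check separation objectwise. Since $\Dn{0}$ is terminal in $\Thtld$ (the previous proposition), the representable presheaf on $\Dn{0}$ is the terminal presheaf $e_{\pref{\Thtld}}$, and the two coface morphisms $\Ths{1},\Tht{1}:\Dn{0}\to\Dn{1}$ of $\G$ yield, through the Yoneda embedding, morphisms $\partial_0=\Ths{1},\ \partial_1=\Tht{1}:e_{\pref{\Thtld}}\to\Dn{1}$; this is the interval to be shown separating. Equalizers in $\pref{\Thtld}$ are computed objectwise, and $e_{\pref{\Thtld}}(S)$ is a singleton for every object $S$, namely the unique morphism $u_S:S\to\Dn{0}$. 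Hence the equalizer of $\partial_0$ and $\partial_1$ is the subpresheaf of $e_{\pref{\Thtld}}$ that is the singleton at $S$ precisely when $\Ths{1}u_S=\Tht{1}u_S$, and empty otherwise. So separation---that this equalizer equals $\varnothing_{\pref{\Thtld}}$---amounts exactly to the assertion that
\[ \Ths{1}u_S \neq \Tht{1}u_S : S \to \Dn{1} \]
for every object $S$ of $\Thtld$.

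To prove this inequality I would argue ``using elements'' (Paragraph \ref{paragr:element}): it suffices to produce, for each $S$, a single globular presheaf $X$ on $\Thtld$ with $X(\Ths{1}u_S)\neq X(\Tht{1}u_S)$ as maps $X_1\to X(S)$. Unwinding the composites contravariantly gives $X(\Ths{1}u_S)=X(u_S)\circ\Gls{1}$ and $X(\Tht{1}u_S)=X(u_S)\circ\Glt{1}$, so it is enough to find $X$ for which $X(u_S)$ is injective and which possesses a $1$-arrow $u$ with $\Gls{1}(u)\neq\Glt{1}(u)$.

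The injectivity of $X(u_S)$ in fact holds for \emph{every} globular presheaf $X$, and this is the one point worth isolating. Every object $S=\Dn{i_1}\amalgd{i'_1}\dots\amalgd{i'_{n-1}}\Dn{i_n}$ admits a point $p=\cepsg{1}\Ths[0]{i_1}:\Dn{0}\to S$, and since $\Dn{0}$ is terminal we have $u_S\,p=\id{\Dn{0}}$; applying $X$ yields $X(p)\circ X(u_S)=\id{X_0}$, so $X(u_S)$ is a split monomorphism. It then remains only to exhibit one strict \oo-groupoid having a $1$-arrow whose source and target differ: via the equivalence $\Mod{\Thtld}\to\wgrp$ of Proposition \ref{prop:mod_thtld} one may take the free-standing isomorphism (two objects $0,1$ and a single isomorphism $u:0\to 1$, with all higher cells trivial), for which $\Gls{1}(u)=0\neq 1=\Glt{1}(u)$. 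By injectivity of $X(u_S)$ this separates $\Ths{1}u_S$ from $\Tht{1}u_S$ for every $S$, whence the interval is separating. The only real content is the split-mono observation; the groupoidal inverse is precisely what forces the two endpoints to be genuinely distinct objects (in a merely categorical analogue one would instead need a category carrying an arrow between two distinct objects), and once injectivity is secured the rest is immediate.
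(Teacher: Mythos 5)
Your proof is correct and is essentially the paper's argument in dual form: the paper precomposes with a point $\Dn{0}\to S$ and uses terminality of $\Dn{0}$ to reduce the hypothetical relation $\Ths{1}u_S=\Tht{1}u_S$ to $\Ths{1}=\Tht{1}$, which is then refuted by exhibiting a strict \oo-groupoid with $\Gls{1}\neq\Glt{1}$; your split-mono observation about $X(u_S)$ is exactly the contravariant image of that same precomposition. The only difference is cosmetic --- you make explicit the point $\cepsg{1}\Ths[0]{i_1}$ and the witnessing groupoid (the free-standing isomorphism), where the paper leaves both implicit.
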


\begin{proof}
We need to show that the equalizer  of $\Ths{1}, \Tht{1} : \Dn{0} \to
\Dn{1}$ in $\pref{\Thtld}$ is the initial presheaf, i.e., that there does not
exist an object $S$ in $\Thtld$ such that the diagram
\[
\xymatrix{
S \ar[r] & \Dn{0} \ar@<.6ex>[r]^-{\Ths{1}} \ar@<-.6ex>[r]_-{\Tht{1}} &
\Dn{1}
}
\]
is commutative. Suppose that such an $S$ exists. By precomposing with a morphism from
$\Dn{0}$, we can assume that $S$ is $\Dn{0}$. Since by the previous
proposition, $\Dn{0}$ is a terminal object, that would imply that $\Ths{1}$
and $\Tht{1}$ are equal. By the universal property of $\Thtld$ (Proposition
\ref{prop:mod_thtld}), that would mean that $\Gls{1}$ and $\Glt{1}$ are
equal for every strict \oo-groupoid. This is obviously false.
\end{proof}

\begin{thm}\label{thm:thtld_test}
The category $\Thtld$ is a strict test category.
\end{thm}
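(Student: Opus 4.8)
The plan is to deduce the statement directly from the test-category criterion recorded in Proposition~\ref{prop:crit_test}: a small category $A$ is a strict test category as soon as (i) $A$ carries a splittable décalage, and (ii) $\pref{A}$ possesses a separating interval whose underlying presheaf is aspherical. Since all the substantive work has been carried out in the preceding sections, the proof itself will amount to a short assembly of those results applied to $A = \Thtld$ (more precisely, to its skeletal small version).

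For hypothesis (i), I would invoke the décalage $\DThtld$ constructed in Paragraph~\ref{paragr:shift_thtld}. That construction produces a genuine décalage on $\Thtld$, and Proposition~\ref{prop:dthtld_split} exhibits the explicit family $\rho^{}_S$ as a splitting of it; hence $\DThtld$ is splittable. For hypothesis (ii), Proposition~\ref{prop:sep_int} furnishes the separating interval $(\Dn{1}, \Ths{1}, \Tht{1})$ on $\pref{\Thtld}$, and the only point left to check is that its underlying presheaf $\Dn{1}$ is aspherical. This is automatic: $\Dn{1}$ is representable, and as recalled in the section on asphericity every representable presheaf is aspherical, the slice category $\Thtld/\Dn{1}$ admitting the terminal object $(\Dn{1}, \id{\Dn{1}})$.

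With both hypotheses verified, Proposition~\ref{prop:crit_test} applies verbatim and yields the conclusion that $\Thtld$ is a strict test category, completing the argument.

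I expect the main obstacle to lie not in this final step but entirely upstream, in the construction of a splittable décalage on $\Thtld$ ``by hand''. That is what occupies Sections~\ref{sec:shifted_ge} through~\ref{sec:shift_thtld}: defining the twisted globular extension $(C, K)$, showing via Corollary~\ref{coro:ck_gr} that the groupoidal structure transports across it, lifting the comparison diagram $F_0 \to K_0 \leftarrow \Dn{0}$ through the universal properties of $\Thz$ and $\Thtld$ to obtain the endofunctor $\widetilde{K}$ together with $\widetilde{\alpha}$ and $\widetilde{\beta}$, and finally producing and verifying the section $\rho$. Once that machinery is available, the present theorem is a formal consequence of the Cisinski--Maltsiniotis criterion and requires no new computation.
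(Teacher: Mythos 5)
Your proposal is correct and follows exactly the same route as the paper's own proof: the separating interval $(\Dn{1}, \Ths{1}, \Tht{1})$ from Proposition~\ref{prop:sep_int} with $\Dn{1}$ aspherical because representable, the splittable d\'ecalage from Paragraph~\ref{paragr:shift_thtld} and Proposition~\ref{prop:dthtld_split}, and then Proposition~\ref{prop:crit_test}. Your closing remark that the real content lies upstream in the construction of the d\'ecalage is also an accurate reading of the paper's structure.
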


\begin{proof}
By the previous proposition, $(\Dn{1}, \Ths{1}, \Tht{1})$ is a separating interval
on $\pref{\Thtld}$. Moreover, since $\Dn{1}$ is a representable presheaf, it is
aspherical. Furthermore, by Paragraph \ref{paragr:shift_thtld} and
Proposition \ref{prop:dthtld_split}, $\Thtld$ admits a splittable décalage.
Hence the result by Proposition \ref{prop:crit_test}.
\end{proof}

\begin{coro}\label{coro:thtld_mcs}
The pair $(\pref{\Thtld}, \W_{\Thtld})$ is endowed with a structure of model
category whose cofibrations are the monomorphisms. This model category
structure is cofibrantly generated, proper and the weak equivalences are
stable by binary products.

Moreover, the homotopy category $\Ho{\Thtld}$ of $\pref{\Thtld}$
is canonically equivalent to the homotopy category $\Hot$.
\end{coro}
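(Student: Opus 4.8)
The plan is to deduce every assertion directly from the fact, just established in Theorem~\ref{thm:thtld_test}, that $\Thtld$ is a strict test category, together with the Grothendieck--Cisinski theorem (Theorem~\ref{thm:test_cm}) and the generalities on test categories recalled in Section~4. First I would observe that a strict test category is in particular a test category, and hence a local test category. Theorem~\ref{thm:test_cm} therefore applies with $A = \Thtld$ and immediately yields the model category structure on $(\pref{\Thtld}, \W_{\pref{\Thtld}})$ whose cofibrations are the monomorphisms, together with the statements that it is cofibrantly generated and proper.

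For the stability of weak equivalences under binary products, I would invoke the second (``moreover'') clause of Theorem~\ref{thm:test_cm}: this is precisely the conclusion guaranteed when $A$ is a \emph{strict} test category, which is our situation by Theorem~\ref{thm:thtld_test}. No further argument is needed here.

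Finally, for the comparison of homotopy categories, I would use that a test category is in particular a weak test category. As recalled in Section~4, the defining conditions of a weak test category are exactly the sufficient conditions for the induced adjunction $(\overline{i^{}_{\Thtld}}, \overline{i^*_{\Thtld}})$ to be an equivalence of categories, so that $\Ho{\Thtld}$ is canonically equivalent to $\Hot$; combining this with Quillen's theorem recalled in the same section, which identifies $\Hot$ with the homotopy category of CW-complexes, gives the last assertion. I do not anticipate any genuine obstacle: the corollary is a formal consequence of Theorem~\ref{thm:thtld_test} and Theorem~\ref{thm:test_cm}. The only point requiring a little care is the bookkeeping of the hierarchy of notions---that \emph{strict test} implies \emph{test}, which in turn implies both \emph{local test} and \emph{weak test}---so as to ensure that each hypothesis invoked from Theorem~\ref{thm:test_cm} and from the discussion of weak test categories is indeed available.
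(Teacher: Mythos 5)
Your proposal is correct and follows exactly the paper's (one-line) proof: apply Theorem~\ref{thm:test_cm} to the strict test category $\Thtld$ from Theorem~\ref{thm:thtld_test}, with the equivalence $\Ho{\Thtld}\simeq\Hot$ coming from the weak test category property. The extra bookkeeping you spell out (strict test implies test implies local test and weak test) is precisely what the paper leaves implicit.
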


\begin{proof}
This follows from the previous theorem by Theorem \ref{thm:test_cm}.
\end{proof}

\begin{paragr}
A similar proof (using the very same calculations) shows analogous
results for the category $\Theta$. Indeed, let $(C, F_0)$ be a categorical
globular extension. The definitions of the $\Thn[j]{i}$'s and $\Thk{i}$'s of
Paragraph \ref{paragr:def_thn}
still make sense. Moreover, by Propositions \ref{prop:thnt}, \ref{prop:ass},
\ref{prop:exc}, \ref{prop:thkt}, \ref{prop:lrun} and \ref{prop:fun},
the twisted globular extension $(C, K_0)$ under $\Thz$, endowed with these morphisms, is a
categorical globular extension. 
By this result and the universal property of $\Th$, 
we can construct a décalage $\DTh$ on $\Th$ as we did in Paragraph
\ref{paragr:shift_thtld} for $\Thtld$. The definition of the
$\rho^{}_S$'s of Paragraph \ref{paragr:split_shift_thtld} still makes sense
and the proof of Proposition \ref{prop:dthtld_split} applies and shows that
$\rho$ is a splitting of $\DTh$. Moreover, the proof of Proposition
\ref{prop:sep_int} shows that $(\Dn{1}, \Ths{1}, \Tht{1})$ is a separating
interval on $\pref{\Th}$. We hence obtain by Theorem \ref{prop:crit_test} that $\Th$
is a strict test category. In particular, $\pref{\Th}$ is endowed with a model
category structure as in Corollary \ref{coro:thtld_mcs}.
One can show that the décalage $\DTh$ and its splitting are the same as those
constructed in \cite{MaltsiTheta}. 

Moreover, since the décalages $\DTh$ and $\DThtld$ are defined in a uniform way,
the canonical functor $i : \Th \to \Thtld$ (obtained by the universal property of
$\Th$) induces a morphism of décalages. 
Proposition \ref{prop:shift_asp} thus implies the following theorem.
\end{paragr}

\begin{thm}\label{thm}
The canonical functor $i : \Th \to \Thtld$ is aspherical.
\end{thm}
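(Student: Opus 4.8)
The plan is to deduce the theorem from Proposition \ref{prop:shift_asp}. Its hypotheses are met: $\Th$ carries the décalage $\DTh$ built in the discussion preceding the statement, while $\Thtld$ carries the décalage $\DThtld$ of Paragraph \ref{paragr:shift_thtld}, which is splittable by Proposition \ref{prop:dthtld_split}. Writing $\DTh = (\Th, \Dn{0}, \widetilde{K}^{\Th}, \widetilde{\alpha}^{\Th}, \widetilde{\beta}^{\Th})$ and $\DThtld = (\Thtld, \Dn{0}, \widetilde{K}, \widetilde{\alpha}, \widetilde{\beta})$, it then suffices to check that $i$ is a morphism of décalages, i.e.\ that $i\widetilde{K}^{\Th} = \widetilde{K}i$, that $i(\Dn{0}) = \Dn{0}$, and that $i \ast \widetilde{\alpha}^{\Th} = \widetilde{\alpha} \ast i$ and $i \ast \widetilde{\beta}^{\Th} = \widetilde{\beta} \ast i$. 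The second identity is immediate, since $i$ is a functor under $\Thz$ and hence under $\G$.

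For the identity $i\widetilde{K}^{\Th} = \widetilde{K}i$ I would argue through the universal property of $\Th$ (Proposition \ref{prop:prop_univ_th}), using that $i$, being the canonical morphism of categorical globular extensions, is a functor under $\Thz$ that respects globular sums and preserves the operations $\Thn[j]{i}$ and $\Thk{i}$. Both $i\widetilde{K}^{\Th}$ and $\widetilde{K}i$ then lie in $\Homglz(\Th, \Thtld)$, so each is determined by its restriction $\Thz \to \Thtld$ together with the images of the $\Thn[j]{i}$'s and the $\Thk{i}$'s. On the one hand, the endofunctors $\widetilde{K}^{\Th}$ and $\widetilde{K}$ restrict to $\G$ as the twisted globular functors $K$ of Proposition \ref{prop:def_K}, whose defining assignments $\Dn{i} \mapsto \Dnt{i}$, $\Ths{i+1} \mapsto \Thst{i+1}$, $\Tht{i+1} \mapsto \Thtt{i+1}$ involve only the injections $\ceps{k}$, the $\Thn{i}$'s and the coglobular operations; since $i$ preserves all of these, the uniqueness of the liftings (Paragraph \ref{paragr:shifted_under_thz}) gives that the two composites agree after restriction to $\Thz$. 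On the other hand, $\widetilde{K}$ carries the tautological operations of $\Thtld$ to the twisted operations $\Thnt[j]{i}$ and $\Thkt{i}$ of Paragraph \ref{paragr:def_thn}, which are again given by explicit formulas in the $\ceps{k}$'s, the $\Thn[j]{i}$'s and the $\Thk{i}$'s; as $i$ preserves every such ingredient, it sends the twisted operations of $\Th$ to those of $\Thtld$, so $i\widetilde{K}^{\Th}$ and $\widetilde{K}i$ have the same images on the $\Thn[j]{i}$'s and $\Thk{i}$'s. The universal property of $\Th$ then forces $i\widetilde{K}^{\Th} = \widetilde{K}i$.

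The two remaining identities follow by the same mechanism: the $\G$-level transformations of Paragraph \ref{paragr:shift_thtld} have components $\alpha^{}_i = \ceps{i+1}\Ths{i+1}$ and $\beta^{}_i = \ceps{1}\Tht{1}$, expressions preserved by $i$, so that $i \ast \alpha^{\Th} = \alpha^{\Thtld}$ and $i \ast \beta^{\Th} = \beta^{\Thtld}$ already as natural transformations of functors $\G \to \Thtld$; the unique liftings through the universal properties of $\Thz$ and then of $\Th$ (resp.\ $\Thtld$) propagate these to the required identities of $\widetilde{\alpha}$ and $\widetilde{\beta}$. I expect the only real difficulty to be bookkeeping rather than mathematics: one must pass the single observation — that $i$ commutes with every formula defining the twisted structure — through the two successive lifting stages (from $\G$ to $\Thz$, then from $\Thz$ to the full globular extension), invoking at each stage the relevant uniqueness clause. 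Once the four conditions are in place, Proposition \ref{prop:shift_asp} delivers that $i$ is aspherical.
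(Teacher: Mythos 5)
Your proposal is correct and follows the paper's own argument: the paper likewise deduces the theorem from Proposition \ref{prop:shift_asp}, observing that since $\DTh$ and $\DThtld$ are constructed by the same formulas, the canonical functor $i$ is a morphism of décalages, with $\DThtld$ splittable by Proposition \ref{prop:dthtld_split}. You merely make explicit, via the universal properties of $\Thz$ and $\Th$, the verification that the paper compresses into the phrase ``defined in a uniform way.''
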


\begin{coro}
Let $i^\ast : \pref{\Thtld} \to \pref{\Th}$ be the restriction functor and
let $i_\ast$ be its right adjoint. Then $(i^\ast, i_\ast)$ is a Quillen
equivalence.
\end{coro}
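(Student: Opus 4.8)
The plan is to deduce the corollary formally from the asphericity statement of Theorem \ref{thm} together with the general criterion recorded in Proposition \ref{prop:asp}, which asserts that an aspherical functor between test categories induces a Quillen equivalence on the corresponding presheaf categories. First I would check the three hypotheses of that proposition for the functor $i : \Th \to \Thtld$. The asphericity of $i$ is precisely the content of Theorem \ref{thm}, so this input is already available. It then remains to confirm that the source and target are test categories: the category $\Thtld$ is a strict test category by Theorem \ref{thm:thtld_test}, and $\Th$ is a strict test category by the discussion immediately preceding Theorem \ref{thm}; since a strict test category is in particular a test category, both $\Th$ and $\Thtld$ qualify.

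With these hypotheses verified, I would apply Proposition \ref{prop:asp} directly to $u = i$. Matching the notation of that proposition, we take $A = \Th$ and $B = \Thtld$, so that $u^\ast = i^\ast : \pref{\Thtld} \to \pref{\Th}$ is exactly the restriction functor of the corollary and $u_\ast = i_\ast$ is its right adjoint. The proposition then yields that $(i^\ast, i_\ast)$ is a Quillen equivalence, where $\pref{\Th}$ and $\pref{\Thtld}$ carry the Grothendieck-Cisinski model structures of Theorem \ref{thm:test_cm} (whose existence is guaranteed because both categories are local test categories).

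I do not anticipate a genuine obstacle: the corollary is a purely formal consequence of the asphericity theorem and the machinery assembled in Section 4. The only point deserving a moment's attention is the bookkeeping of variance, namely confirming that the restriction functor $i^\ast$ indeed plays the role of $u^\ast$ (and hence is the left adjoint of the pair) in Proposition \ref{prop:asp}. This is immediate once one notes that $i$ points from $\Th$ to $\Thtld$, so that the adjunction $i^\ast \dashv i_\ast$ has $i^\ast : \pref{\Thtld} \to \pref{\Th}$ as its left adjoint, exactly as required for a left Quillen functor.
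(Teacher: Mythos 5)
Your proposal is correct and follows exactly the paper's own argument: the corollary is obtained by applying Proposition \ref{prop:asp} to the functor $i$, whose asphericity is Theorem \ref{thm}, with the test-category hypotheses supplied by Theorem \ref{thm:thtld_test} and the discussion of $\Th$ preceding Theorem \ref{thm}. The extra care you take with the variance of $i^\ast$ is sound but not a departure from the paper's route.
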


\begin{proof}
This follows from the previous theorem by Proposition \ref{prop:asp}.
\end{proof}

\begin{paragr}
If $I$ is a subset of $\{l, r, f\}$, let us denote by $\Th_I$ the universal
precategorical globular extension satisfying Axioms $\Assx$ and $\Excx$,
plus Axiom $\Lunx$ (respectively $\Runx$, respectively $\Funx$) if $l$
(respectively $r$, respectively $f$) belongs to $I$. In particular, we have 
$\Th = \Th_{l,r,f}$.

In the same way, if $J$ is a subset of $\{l, r, f, \tilde{l}, \tilde{r}\}$,
let us denote by $\Thtld_J$ the universal pregroupoidal globular extension
satisfying the same axioms as $\Th_{J\cap\{l,r,f\}}$ plus Axiom $\LInvx$
(respectively $\RInvx$) if $\tilde{l}$ (respectively $\tilde{r}$) belongs to
$J$. In particular, we have $\Thtld = \Thtld_{l, r, f, \tilde{l}, \tilde{r}}$.

A closer look at the calculations of the previous sections reveals
that 
\[
\xymatrix@C=1pc@R=2pc{
& \Th_{lr} \ar[rr] \ar[dr] & & \Thtld_{lr\tilde{r}} \ar[rr] \ar[dr] &  &
\Thtld_{lr\tilde{l}\tilde{r}} \ar[rd] \\
\Th_r \ar[ru] \ar[rd] & & \Th = \Th_{lrf} \ar[rr] & &
\Thtld_{lrf\tilde{r}} \ar[rr] & &
\Thtld = \Thtld_{lrf\tilde{l}\tilde{r}} \\
    & \Th_{rf} \ar[ru] \\
}
\]
is a diagram of strict test categories and aspherical functors.

By duality, the diagram obtained by exchanging $l$ and $r$, and
$\tilde{l}$ and $\tilde{r}$, is also a diagram of strict test categories and
aspherical functors.
\end{paragr}

\bibliographystyle{amsplain}
\bibliography{biblio}

\end{document}